\numberwithin{equation}{section}
\newcommand{\origsetminus}{} \let\origsetminus=\setminus           
\renewcommand{\setminus}{\!\origsetminus\!}
\let\oldmarginpar\marginpar
\renewcommand\marginpar[1]{\-\oldmarginpar[\raggedleft\footnotesize #1]%
{\raggedright\footnotesize #1}}
\theoremstyle{plain}
\newtheorem{lemma}{Lemma}[section]
\newtheorem{theorem}[lemma]{Theorem}
\newtheorem{corollary}[lemma]{Corollary}
\newtheorem{example}[lemma]{Example}
\newtheorem{remark}[lemma]{Remark}
\renewcommand{\mathbb}{\mathbbm}                     
\renewcommand{\epsilon}{\varepsilon}                 
\renewcommand{\phi}{\varphi}
\renewcommand{\theta}{\vartheta}
\renewcommand{\le}{\leqslant}
\renewcommand{\ge}{\geqslant}
\newcommand{\origfoo}{} \let\origfoo=\sqrt           
\renewcommand{\sqrt}[1]{\origfoo{#1}\;}
\newcommand{\abs}[1]{\left\lvert #1 \right\rvert}    
\newcommand{\norm}[1]{\left\lVert #1 \right\rVert}   
\DeclareMathOperator{\F}{{\cal F}}                   
\DeclareMathOperator{\R}{{\mathbb R}}                
\DeclareMathOperator{\Rp}{{\mathbb R}_+}             
\DeclareMathOperator{\C}{{\mathbb C}}                
\DeclareMathOperator{\Q}{{\mathbb Q}}
\DeclareMathOperator{\N}{{\mathbb N}}                
\DeclareMathOperator{\Id}{ Id}                        
\newcommand{\A}{{\mathcal A}}
\DeclareMathOperator{\Borel}{{\mathcal B}}
\newcommand{\scapro}[2]{\langle #1,#2\rangle}       
\DeclareMathOperator{\1}{\mathbbm 1}
\newcommand{\D}{{\mathcal D}}
\renewcommand{\L}{{\mathcal L}}
\newcommand{\seg}[2]{[\![ #1,#2]\!]}       
\newcounter{zahl}
\renewcommand{\H}{{\mathcal H}}
\title{Non-standard Skorokhod convergence of L{\'e}vy-driven convolution integrals in Hilbert spaces}
\author{
Ilya Pavlyukevich\\
Institut f{\"u}r Mathematik\\
Friedrich--Schiller--Universit{\"a}t Jena\\
Ernst--Abbe--Platz 2\\
07743 Jena\\
Germany\\
\texttt{ilya.pavlyukevich@uni-jena.de}
\and
 Markus Riedle\\
Department of Mathematics\\
King's College\\
Strand\\
London WC2R 2LS\\
United Kingdom\\
\texttt{markus.riedle@kcl.ac.uk}
}
\begin{document}

\maketitle

\begin{abstract}
We study the convergence in probability in the non-standard $M_1$ Skorokhod topology of the Hilbert valued
stochastic convolution integrals of the type $\int_0^t F_\gamma(t-s)\,d L(s)$ to a process $\int_0^t F(t-s)\, d L(s)$ driven by a
L\'evy process $L$. In Banach spaces we introduce strong, weak and product modes of $M_1$-convergence,
prove a criterion for the $M_1$-convergence in probability of stochastically continuous c\`adl\`ag processes
in terms of the convergence in probability of the finite
dimensional marginals and a good behaviour of the corresponding oscillation functions, and establish criteria for the
convergence in probability of L\'evy driven stochastic convolutions. The theory is applied to the infinitely dimensional
integrated Ornstein--Uhlenbeck processes with diagonalisable generators.
\end{abstract}

\noindent
\begin{small}
\textbf{AMS (2000) subject classification:} 60B12$^\ast$, 60F17, 60G51, 60H05.

\end{small}

\smallskip

\noindent
\begin{small}
\textbf{Key words and phrases:} $M_1$ Skorokhod topology, stochastic convolution integral, L\'evy process, Hilbert space, Banach space,
convergence in probability, Ornstein--Uhlenbeck process, integrated Ornstein--Uhlenbeck process.

\end{small}


\section{Introduction}

In many problems of engineering, physics or finance, the evolution of a random system
can be described by stochastic convolution integrals of some kernel with respect to a noise process, see e.g.\ 
Barndorff--Nielsen and Shephard \cite{BarShe03},
Elishakoff \cite{Elishakoff-99}, Pavlyukevich and Sokolov \cite{PavlSok-08}.

The present work is originally motivated by the paper by Chechkin et al.\ \cite{ChechkinGS02},
where the authors consider a simple model for the motion of a charged particle in a constant external magnetic field
subject to $\alpha$-stable L\'evy perturbation. The particle's position $x\in\mathbb R^3$ is described
by the second-order Newtonian equation
\begin{equation*}
 \ddot x=\dot x \times B-\nu \dot x+\varepsilon \dot \ell,
\end{equation*}
where $B\in\mathbb R^3$ is the direction of the magnetic field, $\nu$, $\varepsilon >0$ and
$\ell$ is an isometric three-dimensional $\alpha$-stable L\'evy process with the characteristic function
$E e^{i\langle u, \ell(t)\rangle}=e^{-t |u |^\alpha}$ for all $u\in\mathbb R^3$. 
Denoting the velocity $\dot x=v$ and the linear operator $Av:=-v\times B +\nu v$, we 
obtain that the velocity process $v$ satisfies the linear Ornstein--Uhlenbeck equation
\begin{equation}
\label{eq:ou}
\dot v=-Av+\varepsilon  \dot \ell,
\end{equation}
whereas the coordinate is obtained by integration of the velocity $v$. Assuming that $v_0=x_0=0$ we
solve equation \eqref{eq:ou} explicitly to obtain
$v(t)=\varepsilon \int_0^t e^{-A(t-s)}\, d \ell(s)$, and  Fubini's theorem yields 
$x(t)=\varepsilon A^{-1}\int_0^t(1-e^{-A(t-s)})\,d \ell(s)$.

It is possible to study the dynamics of $x$ in the regime of the small noise perturbation by letting $\varepsilon\to 0$. 
Indeed, performing a convenient time-change
$t\mapsto \varepsilon^{-\alpha}t$, using the self-similarity of $\alpha$-stable processes, i.e.\
\begin{align*}
  \big(\varepsilon \ell(t/\varepsilon^\alpha)\colon  t\ge 0\big)
  \stackrel{\D}{=}\big(\ell(t)\colon t\ge 0\big),
\end{align*}
 and taking
for convenience another copy $L=\ell$ of the driving process
$\ell$, we
transfer the small noise amplitude into the large friction coefficient; that is the stochastic 
processes $X$ and $V$, defined by $ X(t):=x(t/\varepsilon^\alpha)$ and
$V(t):=v(t/\varepsilon^\alpha)$ for all $t\ge 0$, satisfy the equations
\begin{align*}
\dot V=-\frac{1}{\varepsilon^\alpha}AV+\dot L,\qquad\qquad
\dot X=\frac{1}{\varepsilon^\alpha} V.
\end{align*}
By denoting the large parameter $\gamma:=\varepsilon^{-\alpha}$ we obtain the solutions
\begin{align*}
V_\gamma(t)=\int_0^t e^{-\gamma A(t-s)}\, d L(s),\qquad\qquad
AX_\gamma(t)= \int_0^t(1-e^{-\gamma A(t-s)})\,d L(s).
\end{align*}
It can be shown (see Lemma~\ref{le.fddRn}) that if the eigenvalues of $A$ have strictly positive real parts, 
then $A X_\gamma\to L$ in probability with respect to an appropriate metric ($M_1$) in the sample path space. 

\begin{figure}
\begin{center}
\includegraphics[width=3.3cm]{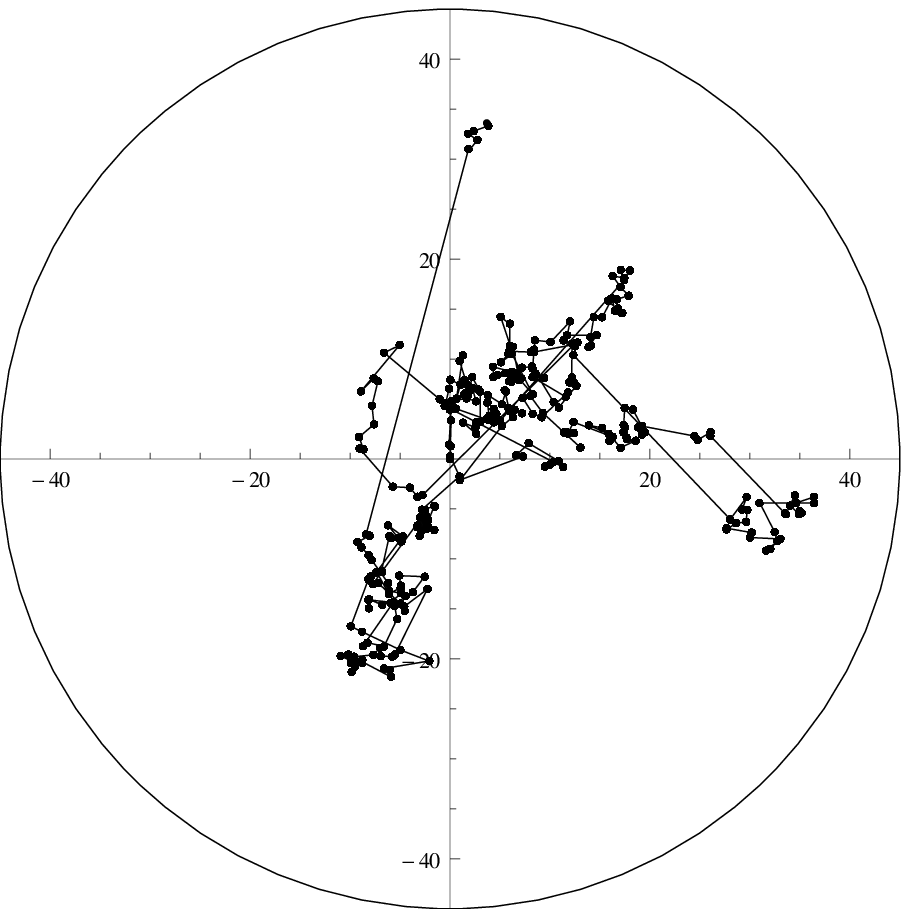}\hfill \includegraphics[width=3.3cm]{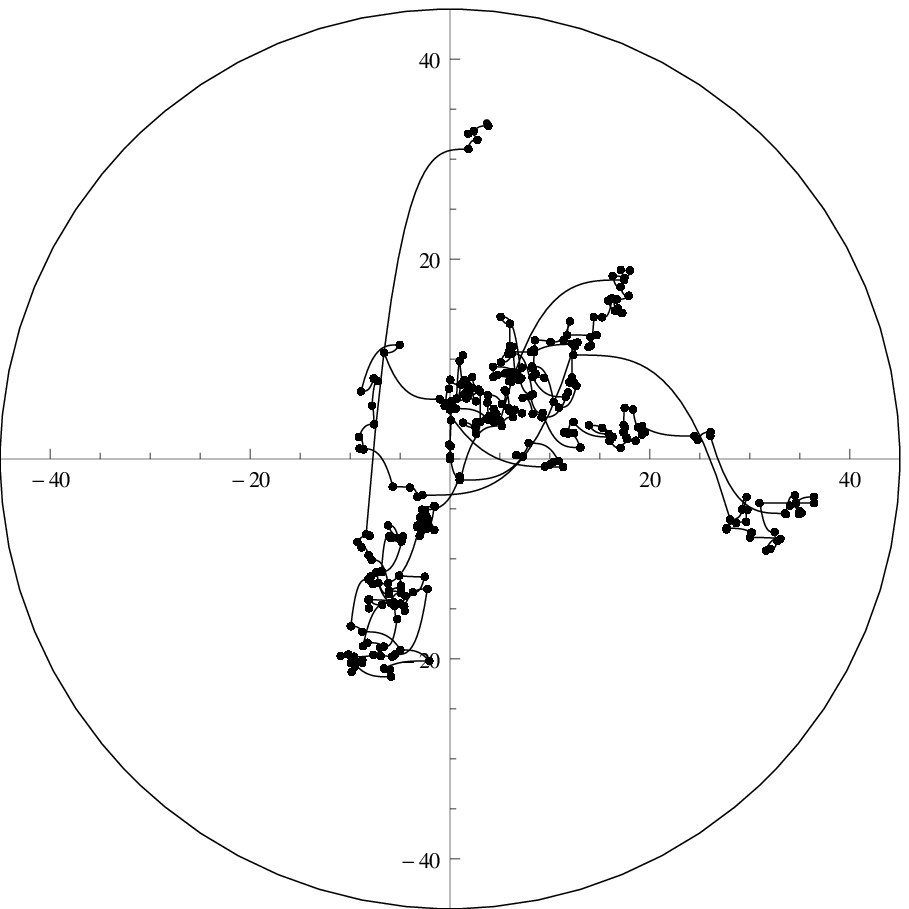}\hfill
\includegraphics[width=3.3cm]{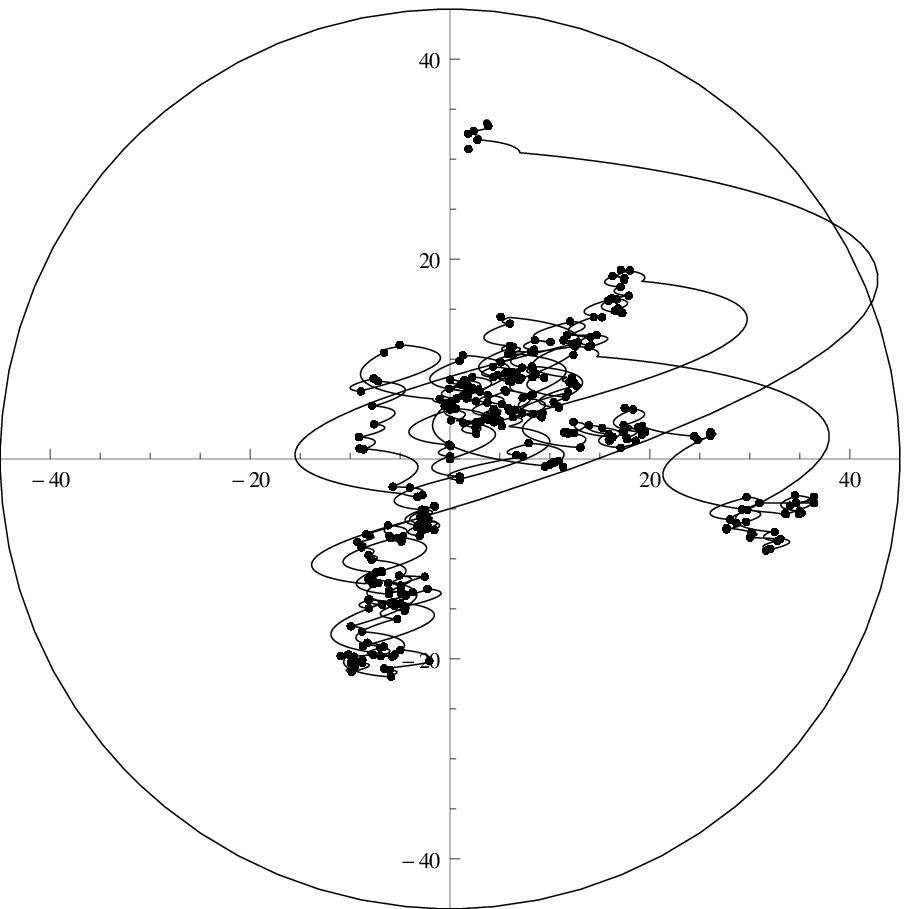}\hfill \includegraphics[width=3.3cm]{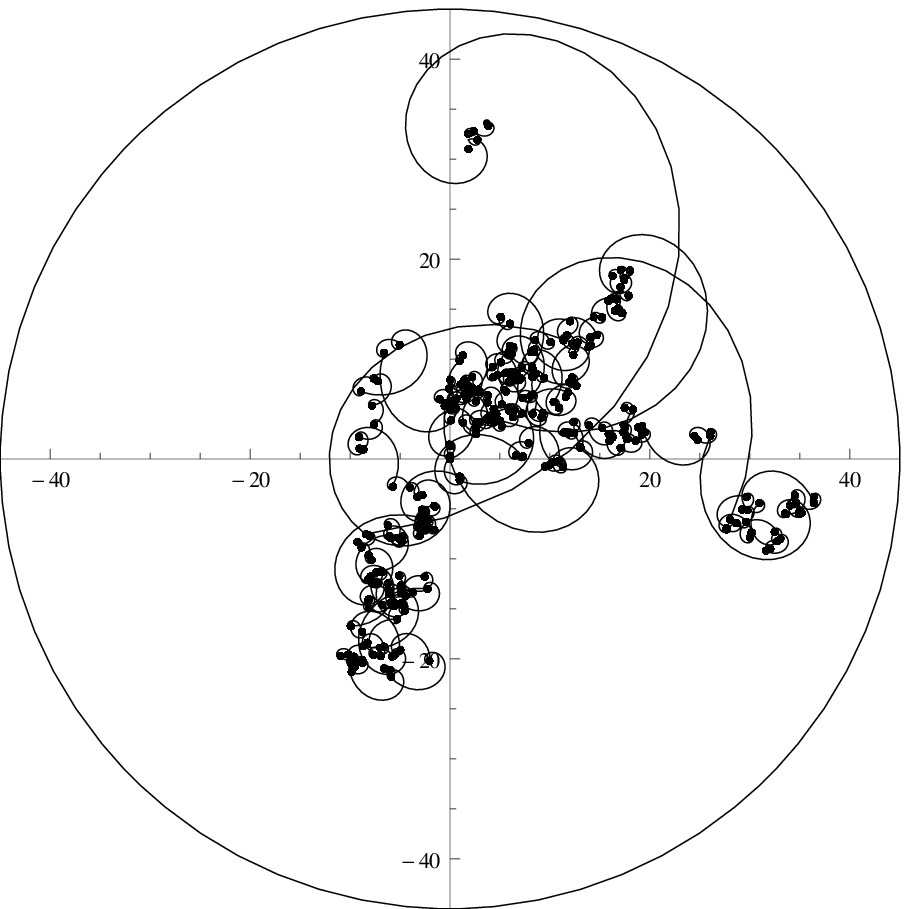}
\end{center}
\caption{Sample paths of the convolution integrals
$A_j X_\gamma^{(j)}(t)=\int_0^t (1-e^{-\gamma A_j(t-s)})\, dL(s)$, $j=1,\dots, 4$, driven by a $1.5$-stable L\'evy process $L$
for large $\gamma>0$ (from left to right).
\label{fig:AX}}
\end{figure}

As an example, consider a two dimensional integrated Ornstein--Uhlenbeck process driven by an $\alpha$-stable L\'evy process $L$
as well as the corresponding sample paths $t\mapsto A_j X_\gamma^{(j)}(t)$ of the integrated Ornstein--Uhlenbeck processes for the following
matrices $A_j$ (see Figure \ref{fig:AX}):
\begin{align*}
A_1=\begin{pmatrix}
   1&0\\
   0&1
  \end{pmatrix},\quad
A_2=\begin{pmatrix}
   1&0\\
   0&3
  \end{pmatrix},\quad
  A_3=\begin{pmatrix}
   1&1\\
   0&1
  \end{pmatrix},\quad
  A_4=\begin{pmatrix}
   1&1 \\
   -1&1
  \end{pmatrix}.
\end{align*}
Obviously, the sample paths differ significantly.
This example determines the scope of this paper: we will establish convergence of general stochastic 
convolution integrals driven by L\'evy processes in the Skorokhod $M_1$ topology in infinite dimensional spaces. 
The particular example \eqref{eq:ou} of this introduction in one dimension is considered by Hintze and Pavlyukevich in \cite{HinPav14}.

As one of four topologies, the $M_1$ topology in the path  space $D([0,1],\R)$, the space of 
c\`adl\`ag functions $f\colon [0,T]\to \R$,  was introduced in the seminal paper 
by Skorokhod \cite{Skorokhod56}. An excellent account on convergence in  the $M_1$ topology in a 
multi-dimensional setting  can be found in Whitt \cite{whitt02}. To the best of our knowledge, 
the $M_1$ topology has not yet been considered in an infinite dimensional setting. Note, that  in the 
$M_1$ topology it is possible that a continuous function, as the sample paths of $AX$, converges 
to a discontinuous function, as the sample paths of the L\'evy process $L$.

In the present paper we study the following aspects of the $M_1$ topology. First, we notice that 
in a typical setting, such as considered in Whitt \cite{whitt02}, one often obtains convergence in the $M_1$ 
topology not only in the weak sense but also in probability. Second, we generalise the 
finite-dimensional setting of Skorokhod and Whitt to stochastic processes with values in separable Banach spaces. Here 
it turns out, that in addition to the two kinds of $M_1$ topologies
in multi-dimensional spaces,  a third kind of $M_1$ topology arises in infinite dimensional spaces.

The second part of our work is concerned with convergence of stochastic convolution integrals in Hilbert 
spaces in the $M_1$ topology. By considering stochastic convolution integrals we may abandon the 
semimartingale setting. It is known, see Basse and Pedersen \cite{BasPed-09} and Basse--O'Connor and 
Rosi\'nski \cite{BasseRosinski}, that even one-dimensional convolution
integrals $\int_0^t F(t-s)\, d L(s)$ define a semimartingale if and only if $F$ is absolutely continuous with sufficiently regular density.

As a specific example, the case of integrated Ornstein--Uhlenbeck  processes in a Hilbert space
is considered in the last section of this paper. It turns out that only in the case of a diagonalisable 
operator, convergence can be established, and then, only in the weakest sense. This result corresponds
to the two-dimensional example above, where only in cases $j=1$ and 
$j=2$ the stochastic convolution integrals converge in the $M_1$ topology. 

\bigskip

\noindent
{\bf Notation:}
For two values $a,b\in\R$ we denote $a\wedge b:=\min\{a,b\}$ and $a\vee b:=\max\{a,b\}$.
The Euclidean norm in $\R^d$, $d\geq 1$, is denoted by $|\cdot |$. A partition $(t_i)_{i=1}^m$
of an interval $[0,T]$ is a finite sequence of numbers $t_i\in [0,T]$
satisfying $t_1<\dots <t_m$.
For functions $f\colon [0,T]\to S$, where $S$ is a linear space with a norm $\|\cdot\|_S$,
we define the supremum norm $\norm{f}_\infty:=\sup_{t\in [0,T]}\norm{f(t)}_{S}$. The 2-variation of a function
$f\colon [0,T]\to S$ is defined by
\begin{align*}
  \norm{f}_{TV_2}^2:=\sup \sum_{k=0}^m \norm{f(t_k)-f(t_{k-1})}^2,
\end{align*}
where the supremum is taken over all partitions of $[0,T]$.

Let $U$ be a separable Banach space with norm $\norm{\cdot}$. The dual space is denoted by $U^\ast$ with dual 
pairing $\scapro{u}{u^\ast}$. The Borel $\sigma$-algebra in $U$ is denoted by $\Borel(U)$. 
For another separable Banach space $V$ the space of bounded, linear operators from $U$ to $V$ is 
denoted by $\L(U,V)$ equipped with the norm topology $\norm{\cdot}_{U\to V}$.

Let $(\Omega,\A,P)$ be a probability space. The space of equivalence classes of measurable
functions $f\colon\Omega\to U$ is denoted by $L_P^0(\Omega;U)$ and it is
equipped with the  topology of convergence in probability. The space of
equivalence classes of measurable functions whose $p$-th power has
finite integral is denoted by $L_P^p(\Omega;U)$ for $p\ge 1$.

\bigskip

\noindent
\textbf{Acknowledgements:} The authors thank the King's College London and FSU Jena for hospitality. 
The second named author acknowledges the EPSRC grant EP/I036990/1.

\section{The Skorokhod space}

In this section, we introduce the Skorokhod space and some of
its topologies. Let $V$ denote a separable Banach space.
For a fixed time $T>0$, the space of $V$-valued c{\`a}dl{\`a}g functions is denoted by $D([0,T];V)$. For each $f\in D([0,T];V)$
we define the set of discontinuities by
\begin{align*}
  J(f):=\{t\in (0,T]\colon  f(t-)\neq f(t)\}.
\end{align*}
The set $J(f)$ is countably finite. The jump size at $t$ is defined by $(\Delta f)(t)=f(t)-f(t-)$.
For two elements $v_1,v_2\in V$ we define the \emph{segment} as
the straight line between $v_1$ and $v_2$:
\begin{align*}
  \seg{v_1}{v_2}:=\{v\in V\colon  v=\alpha v_1 + (1-\alpha)v_2  \text{ for } \alpha \in [0,1]\}.
\end{align*}

In order to define a metric on $D([0,T]; V)$, the so-called \emph{(strong) $M_1$ metric},
we define for each $f\in D([0,T]; V)$ the {\em extended graph of $f$} by
\begin{align*}
  \Gamma(f):= \{(t,v)\in [0,T]\times V\colon  v\in \seg{f(t-)}{f(t)}\},
\end{align*}
where $f(0-):=f(0)$.
The projection of $\Gamma(f)$ to its spatial component in $V$ is given by
\begin{align*}
  \pi(\Gamma(f)):=\{v\in V\colon  (t,v)\in \Gamma(f)\quad\text{for some }t\in [0,T]\}.
\end{align*}
A total order relation on $\Gamma(f)$ is given by
\begin{align*}
  (t_1,v_1)\le (t_2,v_2)
  \;\Leftrightarrow\; \begin{cases}
    t_1< t_2 \quad\text{or} \\
    t_1=t_2\text{ and } \norm{f_1(t_1-)- v_1}\le \norm{f_1(t_1-)-v_2}.
    \end{cases}.
\end{align*}
A {\em parametric representation} of the extended graph of $f$ is a continuous, non-decreasing, surjective function
\begin{align*}
  (r,u):[0,1]\to \Gamma(f), \qquad
   (r,u)(0)=(0,f(0)), \; (r,u)(1)=(T,f(T)).
\end{align*}
Let $\Pi(f)$ denote the set of all parametric representations of $f$.

\subsection{Strong $M_1$ topology}\label{sse.strongtop}

For $f_1,f_2\in D([0,T]; V)$ we define
\begin{align*}
  d_{M}(f_1,f_2):=\inf\Big\{ \abs{r_1-r_2}_{\infty}\vee \norm{u_1-u_2}_{\infty} \colon
     (r_i,u_i)\in \Pi(f_i),\, i=1,2 \Big\}.
\end{align*}
As in the finite dimensional situation, cf.\ \cite[Theorem 12.3.1]{whitt02}, it follows that $d_M$ is a metric on $D([0,T];V )$, and we call it the {\em strong $M_1$ metric}. The metric space $\big(D([0,T];V),d_M\big)$ is separable but not complete.

Convergence of a sequence of functions
in the metric $d_M$ can be described by quantifying the oscillation of the functions. For
$v,v_1, v_2\in V$ the {\em distance from $v$ to  the segment between $v_1$ and $v_2$} is defined by
\begin{align*}
  M(v_1,v, v_2):= \inf_{\alpha\in [0,1]}\norm{v- (\alpha v_1 + (1-\alpha)v_2)}.
\end{align*}
The distance $M$ obeys for every   $v,v_1,v_2,v^\prime, v^\prime_1,v^\prime_2\in V$ the inequality
\begin{align}\label{eq.inM}
M(v_1,v,v_2)\le M(v^\prime_1,v^\prime, v^\prime_2)+ \norm{v-v^\prime}+\norm{v_1-v^\prime_1}+\norm{v_2-v^\prime_2},
\end{align}
and, instead of a triangular inequality, it satisfies
\begin{align}\label{eq.inM+}
M(v_1+v^\prime_1,v+v^\prime, v_2+v^\prime_2)
\le M(v_1,v,v_2)+ \norm{v^\prime}+ \big( \norm{v^\prime_1}\vee \norm{v^\prime_2}\big).
\end{align}
For functions $f,g\in D([0,T];V)$ and $0\le t_1\le t\le t_2\le T$  it follows from  \eqref{eq.inM+} that
\begin{align} \label{eq.f+ginfty}
& M\big(f(t_1)+g(t_1), f(t)+g(t), f(t_2)+g(t_2)\big)
\le M\big(f(t_1), f(t), f(t_2)\big) + 2 \norm{g}_\infty,
\intertext{and if $t_2-t_1\le \delta$ then}
\begin{split}\label{eq.f+gcont}
&  M\big(f(t_1)+g(t_1), f(t)+g(t), f(t_2)+g(t_2)\big)\\
&\qquad\qquad\qquad  \le M\big(f(t_1), f(t), f(t_2)\big) + \sup_{\substack{s_1,s_2\in [0,T]\\ \abs{s_2-s_1}\le \delta}}\norm{g(s_1)-g(s_2)}.
\end{split}
\end{align}
Define for $f\in D([0,T];V)$ and $\delta>0$ the oscillation function by
\begin{align*}
  M(f;\delta):= \sup\Big\{ M\Big(f(t_1), f(t),f(t_2)\Big)\colon  0\le t_1<t <t_2\le T \text{ and } t_2-t_1\le \delta\Big\}.
\end{align*}

\begin{lemma}\label{le.segandM}
Let $f$ be in $D([0,T];V)$ and let $0\le t_1\le t_2\le t_3\le T$ with $(t_i,v_i)\in \Gamma(f)$ for some $v_i\in V$ and  $i=1,2,3$.
If $t_3-t_1\le \delta$ for some $\delta>0$ then
\begin{align*}
  M(v_1,v_2,v_3)\le M(f;\delta).
\end{align*}
\end{lemma}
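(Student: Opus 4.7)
The plan is to reduce the statement to three genuine function values of $f$ via two successive reductions, and then invoke the definition of $M(f;\delta)$. Writing $v_i = \alpha_i f(t_i-) + (1-\alpha_i) f(t_i)$ with $\alpha_i \in [0,1]$, the first reduction handles the middle argument: since $v \mapsto M(v_1, v, v_3)$ is the distance from $v$ to the convex set $\seg{v_1}{v_3}$, it is a convex function of $v$, so
\begin{align*}
M(v_1, v_2, v_3) \le \max\bigl(M(v_1, f(t_2-), v_3),\, M(v_1, f(t_2), v_3)\bigr).
\end{align*}

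The second reduction, for $v_1$ (and symmetrically for $v_3$), is the key step because $M$ is not convex in its outer arguments. I would establish the claim: whenever $v_1 \in \seg{p}{q}$,
\begin{align*}
M(v_1, w, v_3) \le \max\bigl(M(p, w, v_3),\, M(q, w, v_3)\bigr).
\end{align*}
I intend to prove this by contradiction. If the left-hand side, call it $d^*$, strictly exceeded both right-hand terms, then the open ball $B(w, d^*)$ would be disjoint from $\seg{v_1}{v_3}$ yet meet both $\seg{p}{v_3}$ and $\seg{q}{v_3}$. Inside the triangle $T$ with vertices $p, q, v_3$, the segment $\seg{v_1}{v_3}$ separates $T$ into two sub-triangles containing these two meeting points respectively. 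But $B(w, d^*) \cap T$ is convex as an intersection of convex sets, hence path-connected, so any path within it from one meeting point to the other stays in $T$ and must cross $\seg{v_1}{v_3}$, contradicting emptiness. Iterating this claim in $v_1 \in \seg{f(t_1-)}{f(t_1)}$ and in $v_3 \in \seg{f(t_3-)}{f(t_3)}$, and combining with the first reduction, produces
\begin{align*}
M(v_1, v_2, v_3) \le \max_{\hat v_i \in \{f(t_i-),\, f(t_i)\}} M(\hat v_1, \hat v_2, \hat v_3),
\end{align*}
a maximum over eight corner assignments.

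It then remains to bound each corner by $M(f;\delta)$. For any $\hat v_i = f(t_i-)$ I would pick $s_i < t_i$ with $\|f(s_i) - f(t_i-)\| < \epsilon$; for $\hat v_i = f(t_i)$ I would take $s_i = t_i$. In the generic case $t_1 < t_2 < t_3$ these choices can be arranged with $s_1 < s_2 < s_3$ and $s_3 - s_1 \le \delta + \epsilon$, and inequality \eqref{eq.inM} gives $M(\hat v_1, \hat v_2, \hat v_3) \le M(f(s_1), f(s_2), f(s_3)) + 3\epsilon$; letting $\epsilon \to 0$ together with the monotonicity of $\delta \mapsto M(f;\delta)$ yields the desired bound. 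The degenerate cases where two or three of the $t_i$ coincide follow from the total order on $\Gamma(f)$: when all three coincide, the ordering forces $v_1, v_2, v_3$ to lie collinearly along a single jump segment with $v_2 \in \seg{v_1}{v_3}$, so $M(v_1, v_2, v_3) = 0$; the partial coincidences reduce to the main case by the same convexity/separation arguments restricted to the relevant jump segment.

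The hard part is the second reduction. Non-convexity of $M$ in its outer arguments blocks a purely algebraic argument, and the convex-set/separation trick is what makes the reduction go through; conveniently this uses only that balls and convex hulls are convex, so the argument is valid in any (separable) Banach space $V$.
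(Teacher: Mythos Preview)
The paper itself gives no argument; it simply cites Lemma~12.5.2 in Whitt~\cite{whitt02}. Your approach is therefore different by design, and the two reduction steps are sound: convexity of $v\mapsto M(v_1,v,v_3)$ handles the middle argument, and your separation argument inside the triangle $\mathrm{conv}(p,q,v_3)$ correctly yields the claim for the outer arguments (the collinear case can be checked directly).

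The gap is in your final approximation step. When $\hat v_1=f(t_1-)$ and $\hat v_3=f(t_3)$ you are forced to take $s_1<t_1$ and $s_3=t_3$, so the approximating triple only satisfies $s_3-s_1\le (t_3-t_1)+\epsilon$, and letting $\epsilon\to 0$ gives at best
\[
M(\hat v_1,\hat v_2,\hat v_3)\le \lim_{\eta\downarrow 0} M\big(f;(t_3-t_1)+\eta\big).
\]
Monotonicity of $\delta\mapsto M(f;\delta)$ does not imply right-continuity, and indeed $M(f;\cdot)$ need not be right-continuous: for $f=\1_{[1,2)}$ on $[0,3]$ one has $M(f;1)=0$ but $M(f;1+\epsilon)=1$ for every $\epsilon>0$. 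With this $f$, taking $t_1=1$, $t_2=3/2$, $t_3=2$ and $v_1=f(1-)=0$, $v_2=f(3/2)=1$, $v_3=f(2)=0$ gives $M(v_1,v_2,v_3)=1>0=M(f;1)$, so the lemma \emph{as stated with} $t_3-t_1\le\delta$ is in fact false. Whitt's Lemma~12.5.2 carries the strict hypothesis $t_3-t_1<\delta$; under that hypothesis your argument goes through, since then $s_3-s_1\le (t_3-t_1)+\epsilon<\delta$ for $\epsilon$ small enough and the bound $M(f;\delta)$ follows directly without any limiting.
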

\begin{proof}
  Follows as Lemma  12.5.2 in Whitt \cite{whitt02}.
\end{proof}

\begin{lemma}\label{le.Mdisappears}
If $f\in D([0,T];V)$ then
$\displaystyle
  \lim_{\delta \searrow 0} M(f;\delta)=0.
$
\end{lemma}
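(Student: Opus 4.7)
The plan is to exploit the fact that a càdlàg function on a compact interval can be approximated arbitrarily well by a step function in the uniform norm. Concretely, for any $\varepsilon>0$ I would first produce a finite partition $0=s_0<s_1<\cdots<s_n=T$ with the property that the oscillation of $f$ on each piece is at most $\varepsilon$, i.e.\ $\sup_{u,u'\in[s_{i-1},s_i)}\norm{f(u)-f(u')}\le\varepsilon$. This partition is built recursively: given $s_{i-1}$, let $s_i$ be the supremum of those $s\in(s_{i-1},T]$ for which the oscillation of $f$ on $[s_{i-1},s)$ stays below $\varepsilon$. Right-continuity of $f$ at $s_{i-1}$ forces $s_i>s_{i-1}$, while the existence of the left limit $f(s_i-)$ guarantees that the procedure terminates at $s_n=T$ after finitely many steps. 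This is the standard scalar argument due to Skorokhod and transfers verbatim to the Banach-space setting, since it only uses the existence of one-sided limits.

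With the partition in hand, set $\delta^\ast:=\min_{1\le i\le n}(s_i-s_{i-1})$ and fix $\delta\in(0,\delta^\ast)$. Any triple $0\le t_1<t<t_2\le T$ with $t_2-t_1\le\delta$ falls into one of two alternatives: either $[t_1,t_2]$ is contained in a single piece $[s_{i-1},s_i)$, or there is exactly one partition point $s_i$ with $t_1<s_i\le t_2$. In the first case, $\norm{f(t)-f(t_1)}\le\varepsilon$, so specialising to $\alpha=1$ in the infimum defining $M(f(t_1),f(t),f(t_2))$ yields the bound $\varepsilon$. In the second case I would split according to whether $t<s_i$ (both $t_1,t\in[s_{i-1},s_i)$, take $\alpha=1$) or $t\ge s_i$ (both $t,t_2\in[s_i,s_{i+1})$, take $\alpha=0$). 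Either way $M(f(t_1),f(t),f(t_2))\le\varepsilon$, hence $M(f;\delta)\le\varepsilon$, and since $\varepsilon>0$ is arbitrary the lemma follows.

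The only substantive step is the construction of the $\varepsilon$-oscillation partition; everything afterwards relies on the trivial bound $M(v_1,v,v_2)\le\min(\norm{v-v_1},\norm{v-v_2})$, obtained by restricting the infimum to $\alpha\in\{0,1\}$. So the ``main obstacle'' is really just invoking the classical Skorokhod decomposition of càdlàg paths into pieces of small oscillation, a step which is completely independent of the dimension or even of the linear structure of $V$.
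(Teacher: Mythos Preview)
Your argument is correct. The paper itself does not give a proof but simply defers to Lemma~12.5.3 in Whitt~\cite{whitt02}, and the argument you sketch---partitioning $[0,T]$ into finitely many half-open pieces of oscillation at most $\varepsilon$ and then using $M(v_1,v,v_2)\le\min(\norm{v-v_1},\norm{v-v_2})$ on intervals that meet at most one partition point---is precisely the standard proof of that lemma, carried over verbatim to the Banach-space setting.

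One small cosmetic point: in your second case you write ``$t,t_2\in[s_i,s_{i+1})$'', which tacitly assumes $i<n$. When $s_i=s_n=T$ the sub-case $t\ge s_i$ is vacuous (since $t<t_2\le T$), so only the $\alpha=1$ branch is needed there; you may want to say this explicitly. Apart from that, the proof is complete.
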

\begin{proof}
  Follows as Lemma  12.5.3 in \cite{whitt02}.
\end{proof}

\begin{lemma}\label{le.sup+M}
  Let $f_n^\gamma, f^\gamma, f_n, f$, $n\in\N$, $\gamma> 0$, be  functions in $D([0,T];\R)$ satisfying
\begin{enumerate}
  \item[{\rm (i)}] $\displaystyle \lim_{n\to\infty}\left( \norm{f_n-f}_{\infty} +\limsup_{\gamma\to\infty}\norm{f_n^\gamma-f^\gamma}_\infty\right)= 0$;
 \item[{\rm (ii)}] $\displaystyle\lim_{\gamma\to\infty} f_n^\gamma = f_n$
   in $(D([0,T];\R),d_M)$ for all $n\in\N$.
\end{enumerate}
Then it follows that $\displaystyle\lim_{\gamma\to\infty} f^\gamma = f$
in $\big(D([0,T];\R),d_M\big)$.
\end{lemma}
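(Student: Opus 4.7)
My plan is to use the triangle inequality for $d_M$ together with the dominance of the strong $M_1$ metric by the supremum metric, namely $d_M(g,h)\le \norm{g-h}_\infty$ for all $g,h\in D([0,T];\R)$.

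To establish this dominance I would produce parametric representations of $g$ and $h$ sharing a common time component. Enumerate $J(g)\cup J(h)=\{t_1,t_2,\dots\}$ and assign to each $t_k$ a closed subinterval $I_k\subset[0,1]$, the $I_k$ being mutually disjoint and arranged in the order induced by $[0,T]$. Build a continuous, non-decreasing, surjective $r\colon[0,1]\to[0,T]$ which is constantly equal to $t_k$ on $I_k$ and strictly increases off $\bigcup_k I_k$. Set $u_g(s):=g(r(s))$ outside $\bigcup_k I_k$, and on each $I_k$ linearly interpolate $u_g$ from $g(t_k-)$ to $g(t_k)$; define $u_h$ by the same recipe with $h$ in place of $g$. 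Then $(r,u_g)\in\Pi(g)$, $(r,u_h)\in\Pi(h)$, and for each $s$ the two values share a common interpolation parameter $\alpha(s)\in[0,1]$, so
\begin{align*}
  \abs{u_g(s)-u_h(s)}\le \alpha(s)\abs{g(r(s)-)-h(r(s)-)}+(1-\alpha(s))\abs{g(r(s))-h(r(s))}\le \norm{g-h}_\infty,
\end{align*}
yielding the dominance.

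With the dominance in hand, the lemma follows by a standard triangle argument. Fix $\epsilon>0$. By (i) I choose $N\in\N$ with $\norm{f_N-f}_\infty<\epsilon/3$ and $\limsup_{\gamma\to\infty}\norm{f_N^\gamma-f^\gamma}_\infty<\epsilon/3$. By (ii) I then pick $\gamma_0$ such that $d_M(f_N^\gamma,f_N)<\epsilon/3$ and $\norm{f_N^\gamma-f^\gamma}_\infty<\epsilon/3$ for every $\gamma\ge\gamma_0$. The triangle inequality for $d_M$ combined with the dominance yields
\begin{align*}
  d_M(f^\gamma,f)\le d_M(f^\gamma,f_N^\gamma)+d_M(f_N^\gamma,f_N)+d_M(f_N,f)\le \norm{f^\gamma-f_N^\gamma}_\infty+d_M(f_N^\gamma,f_N)+\norm{f_N-f}_\infty<\epsilon
\end{align*}
for every $\gamma\ge\gamma_0$, proving the claim.

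The main obstacle lies in the first step, namely checking that $(r,u_g)$ is a genuine parametric representation when $t_k$ is a jump of $h$ only. In that case $u_g$ is constant equal to $g(t_k)$ on $I_k$, so graph-order monotonicity holds trivially and continuity of $u_g$ at the endpoints of $I_k$ follows from $g$ being c\`adl\`ag with $g(t_k-)=g(t_k)$. The symmetric case is analogous. The rest of the properties (continuity, surjectivity, monotonicity across the vertical segments) are routine, and the construction is the natural extension of the finite-dimensional argument in \cite[Sec.~12.3]{whitt02}.
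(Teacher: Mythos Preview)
Your proof is correct and takes a genuinely different route from the paper. The paper verifies the characterisation of $M_1$ convergence from \cite[Theorem 12.5.1]{whitt02}: it uses (ii) to extract, for the fixed approximant $f_{n_0}$, a dense set $D\subseteq[0,T]$ on which $f_{n_0}^\gamma(t)\to f_{n_0}(t)$ and a uniform bound on the oscillation $M(f_{n_0}^\gamma,\delta)$, then transfers both properties to $f^\gamma$ via the uniform closeness supplied by (i) and inequality \eqref{eq.f+ginfty}. Your argument instead exploits the elementary dominance $d_M(g,h)\le\|g-h\|_\infty$ (which you derive by constructing parametric representations with a common time component) together with the triangle inequality for $d_M$, turning the lemma into a one-line $3\epsilon$ estimate. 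Your approach is shorter and more transparent; the paper's approach has the minor advantage that it never needs to construct explicit parametrisations, relying only on the oscillation-function characterisation that is used repeatedly elsewhere in the paper.
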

\begin{proof}
Fix  $\epsilon>0$ and choose $n_0\in\N$ such that
\begin{align*}
   \norm{f_n-f}_{\infty}\le \epsilon\;\text{ and } \; \limsup_{\gamma\to\infty}\norm{f_n^\gamma-f^\gamma}_\infty\le  \epsilon
   \qquad\text{for all }n\ge n_0.
\end{align*}
Thus, there exists $\gamma_0=\gamma_0(n_0)$ such that
\begin{align*}
  \norm{f_{n_0}^\gamma-f^\gamma}_\infty \le 2\epsilon
  \quad\text{for all }\gamma\ge \gamma_0.
\end{align*}
 Condition (2) implies by part (iv)  in
\cite[Theorem 12.5.1]{whitt02} that there exists a dense subset $D\subseteq [0,T]$ including $0$ and $T$ such that
for each $t\in D$ there exists a $\gamma_1=\gamma_1(t,n_0)>0$ with
\begin{align}\label{eq.contint}
  \abs{f_{n_0}^\gamma (t)- f_{n_0}(t)}\le \epsilon \qquad\text{for all }\gamma\ge\gamma_1,
\end{align}
and that there exists a $\delta_0=\delta_0(n_0)>0$ such that
\begin{align}\label{eq.Mvan}
 \limsup_{\gamma\to\infty} M(f_{n_0}^\gamma, \delta)\le \epsilon
  \qquad\text{for all }\delta\le \delta_0.
\end{align}
Consequently, we can conclude from \eqref{eq.contint} for each $t\in D$
and  $\gamma\ge \max\{\gamma_0,\gamma_1\}$ that
  \begin{align*}
    \abs{f^\gamma(t)-f(t)}
    \le \abs{f^\gamma(t)-f_{n_0}^\gamma(t)} + \abs{f_{n_0}^\gamma(t)-f_{n_0}(t)} + \abs{f_{n_0}(t)-f(t)}
    \le 4 \epsilon.
  \end{align*}
Thus we have shown that
\begin{align}\label{eq.fgamma-cont}
  \lim_{\gamma\to\infty} f^\gamma(t)=f(t)
  \qquad\text{for all }t\in D.
\end{align}
It follows from  \eqref{eq.Mvan} for each $\delta\le \delta_0$ by inequality \eqref{eq.f+ginfty} that
\begin{align}\label{eq.fgamma-M}
 \limsup_{\gamma\to\infty}   M(f^\gamma,\delta)
  \le  \limsup_{\gamma\to\infty}  M(f_{n_0}^\gamma,\delta) + 2\limsup_{\gamma\to\infty} \norm{f_{n_0}^\gamma-f^\gamma}
  \le 3 \epsilon.
\end{align}
By \eqref{eq.fgamma-cont} and \eqref{eq.fgamma-M} a  final application of Theorem 12.5.1 in \cite{whitt02}  completes the proof.
\end{proof}

The metric space $(D([0,T];V), d_M)$ is not complete. However, one can define another metric
$\hat{d}_M$ on $D([0,T];V)$ such that $(D([0,T];V),\hat{d}_M)$ is complete and the two topological spaces
$(D([0,T];V),\,d_M)$ and $(D([0,T];V),\,\hat{d}_M)$ are homeomorphic, that is there exists a bijective
function $i\colon (D([0,T];V),\,d_M)\to (D([0,T];V),\,\hat{d}_M)$ such that both $i$ and its inverse
are continuous, see \cite[Theorem 12.8.1]{whitt02}. The last property, i.e.\ the existence of a homeomorphic mapping
between the metric spaces, is called the {\em topological  equivalence} of $(D([0,T];V),\,d_M)$ and
$(D([0,T];V),\,\hat{d}_M)$. In particular, this means that open, closed and compact sets are the same in both spaces but also
the spaces  of real-valued, continuous functions on $D([0,T];V)$  coincide
for both metrics. Moreover, since $(D([0,T];V),\,d_M)$ is separable,
the space $(D([0,T];V),\,\hat{d}_M)$ is also separable, and thus it is Polish, i.e.\ a topological
space which is metrisable as a complete separable space.

\subsection{Product $M_1$ topology}

For the product $M_1$ topology we assume that the Banach space $V$ has a Schauder basis
$e=(e_k)_{k\in\N}$ and that $(e_k^\ast)_{k\in\N}$ denotes the bi-orthogonal functionals.  Instead of equipping $D([0,T];V)$ with the strong $M_1$ topology we can consider the space as the Cartesian product space
$\prod_{k=1}^\infty D([0,T];\R)$ and equip it with the product metric
\begin{align*}
  d_M^{{\,e}}(f,g):= \sum_{k=1}^\infty \frac{1}{2^k}\frac{d_M(\scapro{f}{e_k^\ast},\scapro{g}{e_k^\ast})}{1+ d_M(\scapro{f}{e_k^\ast},\scapro{g}{e_k^\ast})}
  \qquad\text{for all }f,g\in D([0,T];V).
\end{align*}
The metric on the right hand side refers to the metric on the space $D([0,T];\R)$ introduced in the previous Section \ref{sse.strongtop}. Clearly, convergence in $d_M^{\,e}$ depends on the chosen Schauder basis $e$ of $V$.
Alternatively, we
can use the topological equivalent metric $\hat{d}_M$ on $D([0,T];\R)$ to define
the product metric
\begin{align*}
  \hat{d}_M^{{\,e}}(f,g):= \sum_{k=1}^\infty \frac{1}{2^k}\frac{\hat{d}_M(\scapro{f}{e_k^\ast},\scapro{g}{e_k^\ast})}{1+ \hat{d}_M(\scapro{f}{e_k^\ast},\scapro{g}{e_k^\ast})}
  \qquad\text{for all }f,g\in D([0,T];V).
\end{align*}
Since $(D([0,T];\R),\hat{d}_M)$ is a Polish space it follows that $(D([0,T];V),\hat{d}_M^{\,e})$ is Polish, too.
Analogously, we obtain that $(D([0,T];V),d_M^{\,e})$ is topological equivalent to $(D([0,T];V),\hat{d}_M^{\,e})$.
Recall, that the product topology is the topology of {\em point-wise convergence}, i.e.\ a sequence $(f_n)_{n\in\N}$
converges to $f$ in $(D([0,T];V),d_M^{\,e})$ if and only if for any $k\in\N$
\begin{align*}
  \lim_{n\to\infty} d_M\big(\scapro{f_n}{e_k^\ast},\scapro{f}{e_k^\ast}\big)=0.
\end{align*}

\subsection{Weak $M_1$ topology}

In an infinite dimensional Banach space $V$ there is a third mode of convergence in the $M_1$ sense.
A sequence $(f_n)_{n\in\N}\subseteq D([0,T];V)$ is said to {\em converge weakly} to $f\in D([0,T];V)$ if for all $v^\ast\in V^\ast$ we have
\begin{align*}
\lim_{n\to\infty} \scapro{f_n}{v^\ast}=\scapro{f}{v^\ast}  \qquad\text{in } \big(D([0,T];\R), d_M\big).
\end{align*}
Note, that if $V$ is infinite dimensional the induced topology is not metrisable.
The three different modes of convergence are related as shown by the following diagram:
\begin{align*}
 \text{strong $M_1$ }
  \Rightarrow
 \text{weak $M_1$ }
 \Rightarrow
  \text{product $M_1$.}
\end{align*}
The first implication follows from the fact that  $f_1,f_2\in D([0,T];V)$ obey the inequality
\begin{align*}
 d_M(\scapro{f_1}{v^\ast},\, \scapro{f_2}{v^\ast})\le (\norm{v^\ast}\vee 1) \, d_M(f_1,f_2)
\qquad\text{for all }v^\ast\in V^\ast.
\end{align*}
Since the product topology is the point-wise convergence it is immediate  that it is implied by  weak convergence.

If $V$ is finite dimensional it is known that the weak and strong topology coincide, see Theorem 12.7.2 in \cite{whitt02}. In the infinite dimensional situation the situation differs as illustrated by the following example.
\begin{example}
Let $V$ be an arbitrary Hilbert space with orthonormal basis $(e_k)_{k\in\N}$.
The functions $f_n \colon [0,T]\to V$ can be chosen as
$f_n(t):=e_n $ for all $t\in [0,T]$ and $n\in \N$. It follows that
\begin{align*}
\sup_{t\in [0,T]}\abs{\scapro{f_n(t)}{v}}=\abs{ \scapro{e_n}{v}}\to 0
\qquad\text{as $n\to \infty$ for all }v\in V,
\end{align*}
and thus, the sequence $(f_n)_{n\in\N}$ converges weakly to $0$ in $D([0,T];V)$. However, since $\norm{f_n(t)}=1$
for all $n\in \N$ and $t\in [0,T]$ it does not converge strongly.
\end{example}

\begin{example}
It is well known that in a finite dimensional Hilbert space $V$ convergence in the product topology does not imply strong convergence in the $M_1$ metric. Since in the finite dimensional situation, the strong $M_1$ topology coincides with the topology of weak convergence in $D([0,T];V)$, this is also an example that convergence in the product topology does not imply weak convergence.
\end{example}

Finally let us remark that our notion of the modes of convergence in the $M_1$ sense does not
coincide with the one in the literature such as \cite{whitt02}. There the product topology is called weak topology, whereas our weak topology does not have a name as it coincides with the strong topology in finite dimensional spaces. Since addition is not continuous in $D([0,T];V)$  our notion of weak convergence can not be confused with the usual weak topology in a linear topological vector space.

\subsection{Random variables in the Skorokhod space}

Let $\Borel(D)$ denote the Borel-$\sigma$-algebra generated by open sets in $(D([0,T];V),d_M)$. As in the finite dimensional situation,
see \cite[Theorem 11.5.2]{whitt02}, it can be shown that $\Borel(D)$  coincides with the $\sigma$-algebra, 
generated by the coordinate mappings
\begin{align*}
  \pi_{t_1,\dots, t_n}\colon D([0,T];V)\to V^n, \qquad \pi_{t_1,\dots, t_n}(f)=
  (f(t_1),\dots, f(t_n))
\end{align*}
for each $t_1,\dots, t_n\in [0,T]$ and $n\in\N$. The analogous result for the Skorokhod $J_1$ 
topology in separable metric spaces can be found in \cite{Jakubowski86}.
On the other hand, the Borel-$\sigma$-algebra  generated by open sets
in $(D([0,T];V),d_M^{\,e})$ equals the product of Borel-$\sigma$-algebras in $(D([0,T];\R),d_M)$. Consequently, both
Borel-$\sigma$-algebras, generated by open sets with respect to $d_M$ and $d_M^{\,e}$ coincide.

Let $(\Omega,\A,P)$ be a probability space and let $X:=(X(t)\colon t\in [0,T])$ be a $V$-valued stochastic process
with c{\`a}dl{\`a}g paths. Since the Borel-$\sigma$-algebra $\Borel(D)$ is generated by the coordinate mappings,
it follows that $X$ is a $D([0,T];V)$-valued random variables.

\section{Convergence in probability}\label{se.conv-prob}

\subsection{Strong topology}

In this section we consider the convergence in probability in the strong metric $d_M$ of stochastic processes
$(X_n)_{n\in\N}$ to a stochastic process $X$ in the space $D([0,T];V)$. If the stochastic processes
$X$ and $X_n$ have c{\`a}dl{\`a}g paths then $X$ and $X_n$ are $D([0,T);V)$-valued random variables.
Since $\big(D([0,T);V),d_M\big)$ is separable,  convergence in probability is well defined in the sense that
$(X_n)_{n\in\N}$ converges to $X$ in
probability in $\big(D([0,T);V),d_M\big)$ if
\begin{align*}
  \lim_{n\to\infty} P\Big( d_M(X_n,X)\ge \epsilon\Big)=0
  \qquad\text{for all }\epsilon>0.
\end{align*}

\begin{lemma}\label{le.stochconsup}
   A $V$-valued stochastically continuous stochastic process $(X(t)\colon t\in [0,T])$
   with c{\`a}dl{\`a}g trajectories obeys
   \begin{align*}
     \lim_{\delta\searrow 0} \sup_{t\in[0,T]} P\left(\sup_{\substack{ s\in [0,T] \\\abs{s-t}\le \delta}} \norm{X(t)-X(s)}\ge\epsilon\right)=0
     \qquad\text{for all }\epsilon>0.
   \end{align*}
\end{lemma}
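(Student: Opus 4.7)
The plan is to argue by contradiction. Suppose the conclusion fails; then there exist $\epsilon_0,\eta_0>0$, a sequence $\delta_n\searrow 0$ and points $t_n\in[0,T]$ such that
\begin{align*}
P\!\left(\sup_{s\in[0,T],\,\abs{s-t_n}\le\delta_n}\norm{X(t_n)-X(s)}\ge\epsilon_0\right)\ge \eta_0
\qquad\text{for all }n\in\N.
\end{align*}
Since $[0,T]$ is compact, after passing to a subsequence I may assume $t_n\to t_\ast$ for some $t_\ast\in[0,T]$.

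The next step is to invoke stochastic continuity at $t_\ast$. For any sequence $s_k\nearrow t_\ast$ the pathwise left limit gives $X(s_k)\to X(t_\ast-)$ almost surely, while stochastic continuity gives $X(s_k)\to X(t_\ast)$ in probability. Hence $X(t_\ast-)=X(t_\ast)$ almost surely, i.e.\ $P(t_\ast\in J(X))=0$ (with the convention that $0\notin J(X)$ automatically). On the almost sure event $\Omega_0:=\{t_\ast\notin J(X)\}$ the path $X(\cdot,\omega)$ is continuous at $t_\ast$ in the ordinary topological sense: right-continuity is built into the c\`adl\`ag assumption, and left-continuity at $t_\ast$ follows from $X(t_\ast-,\omega)=X(t_\ast,\omega)$ combined with the existence of the left limit. (At $t_\ast=0$ only right-continuity is required, and at $t_\ast=T$ the above argument still applies.)

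Now fix $\omega\in\Omega_0$ and $\epsilon>0$. By the continuity just established there exists $\rho=\rho(\omega,\epsilon)>0$ such that
\begin{align*}
\norm{X(r,\omega)-X(t_\ast,\omega)}<\tfrac{\epsilon}{2}
\qquad\text{for all } r\in (t_\ast-\rho,t_\ast+\rho)\cap[0,T].
\end{align*}
Since $t_n\to t_\ast$ and $\delta_n\to 0$, for all $n$ sufficiently large the whole interval $[t_n-\delta_n,t_n+\delta_n]\cap[0,T]$ lies inside $(t_\ast-\rho,t_\ast+\rho)\cap[0,T]$, and the triangle inequality yields
\begin{align*}
\sup_{s\in[0,T],\,\abs{s-t_n}\le\delta_n}\norm{X(t_n,\omega)-X(s,\omega)}\le \epsilon.
\end{align*}
Therefore the random variables $Y_n:=\sup_{\abs{s-t_n}\le\delta_n}\norm{X(t_n)-X(s)}$ converge to $0$ almost surely on $\Omega_0$, hence almost surely, hence in probability. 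This contradicts $P(Y_n\ge\epsilon_0)\ge \eta_0$ and proves the lemma.

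The only place that requires genuine care is the topological continuity step: one must combine the c\`adl\`ag property with $X(t_\ast-)=X(t_\ast)$ to upgrade one-sided control into a genuine two-sided neighbourhood in which $X(r,\omega)$ stays close to $X(t_\ast,\omega)$. Once this is in hand the shrinking neighbourhoods $[t_n-\delta_n,t_n+\delta_n]$ are eventually swallowed by such a neighbourhood and the contradiction argument closes immediately.
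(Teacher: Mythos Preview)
Your argument is correct and follows essentially the same route as the paper's proof: both proceed by contradiction, extract a subsequential limit $t_\ast$ of the offending times via compactness, use stochastic continuity to conclude $P(\Delta X(t_\ast)\neq 0)=0$, and then exploit pathwise continuity at $t_\ast$ to force the oscillations $Y_n$ to $0$ almost surely, contradicting the standing lower bound on $P(Y_n\ge\epsilon_0)$. The paper packages the last step via the monotone quantity $Z(t_0,\delta)\to\lvert\Delta X(t_0)\rvert$ and dominated convergence of indicators, while you argue the almost-sure convergence directly; these are cosmetic differences only.
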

\begin{proof}
  Define for each $\delta>0$ and $t\in [0,T]$ the random variable
  \begin{align*}
    Z(t,\delta):=\sup_{\substack{ s\in [0,T] \\\abs{s-t}\le \delta}} \norm{X(t)-X(s)}
  \end{align*}
and assume for a contradiction that there exist $\epsilon_1,\epsilon_2>0$, a sequence
$(\delta_n)_{n\in\N}\subseteq \Rp$ converging to 0, and a sequence $(t_n)_{n\in\N}\subseteq [0,T]$ such that
\begin{align*}
  \lim_{n\to\infty} P\big( Z(t_n,\delta_n)\ge \epsilon_1\big)\ge \epsilon_2.
\end{align*}
By passing to a subsequence if necessary we can assume that $t_n\to t_0$
for some $t_0\in [0,T]$. Then, for each $\delta>0$ there exists $n(\delta)\in\N$ such
that
\begin{align*}
  [t_n-\delta_n,t_n+\delta_n]\subseteq [t_0-\delta,t_0+\delta]
  \qquad\text{for all }n\ge n(\delta),
\end{align*}
which implies by the definition of $Z$ that
\begin{align*}
  Z(t_n,\delta_n)\le Z(t_0,\delta)\qquad \text{for all }n\ge n(\delta).
\end{align*}
Consequently, we obtain for every $\delta>0$ that there exists  $ n(\delta)\in\N$ such that
\begin{align}\label{eq.EZ>e}
  \epsilon_2\le E\big[ \1_{\{Z(t_n,\delta_n)\ge\epsilon_1\}}\big]
  \le   E\big[ \1_{\{Z(t_0,\delta)\ge \epsilon_1\}}\big]   \qquad\text{for all }n\ge n(\delta).
\end{align}
On the other hand, since $Z(t_0,\delta)\to \abs{\Delta X(t_0)}$ $P$-a.s. as $\delta\searrow 0$,
Lebesgue's theorem of dominated convergence implies that
\begin{align*}
0= P\big(\abs{\Delta X(t_0)}\ge \epsilon_1\big)
= E\left[\lim_{\delta\searrow 0} \1_{\{Z(t_0,\delta)\ge \epsilon_1\}}\right]
=\lim_{\delta\searrow 0} E\left[ \1_{\{ Z(t_0,\delta)\ge \epsilon_1\}}\right],
\end{align*}
which contradicts \eqref{eq.EZ>e}.
\end{proof}

\begin{theorem}\label{th.stochconv=fdd+M}
For $V$-valued, stochastically continuous stochastic processes $(X(t)\colon t\in [0,T])$ and
 $(X_n(t)\colon t\in [0,T])$, $n\in\N$, with c{\`a}dl{\`a}g trajectories the following are equivalent:
\begin{enumerate}
\item[{\rm (a)}] $X_n\to X$ in probability in $\big(D([0,T];V),\, d_M\big)$ as $n\to\infty$.
\item[{\rm (b)}] the following two conditions are satisfied:
  \begin{enumerate}
  \item[{\rm (i)}] for every $t\in [0,T]$ we have $\lim_{n\to\infty} X_n(t)=X(t)$ in probability
  \item[{\rm (ii)}] for every $\epsilon>0$ the oscillation function obeys
  \begin{align}
  \label{eq:M}
      \lim_{\delta\searrow 0} \limsup_{n\to\infty} P\big(M(X_n,\delta)\ge\epsilon\big)=0.
  \end{align}
\end{enumerate}
\end{enumerate}
\end{theorem}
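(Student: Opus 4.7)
My plan is to establish the equivalence by adapting Whitt's deterministic characterisation of $d_M$-convergence (cf.\ \cite[Theorem 12.5.1]{whitt02}) to the Banach-valued setting via the inequalities \eqref{eq.inM}--\eqref{eq.f+gcont} and Lemmas \ref{le.segandM}--\ref{le.Mdisappears}, and combining it with stochastic continuity and the standard subsequence characterisation of convergence in probability.

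For (a)~$\Rightarrow$~(b)(i), I fix $t\in[0,T]$; given any subsequence of $(X_n)$ I extract a further sub-sub-sequence $X_{n_k}$ with $d_M(X_{n_k},X)\to 0$ a.s.\ and invoke Whitt's theorem, which yields $X_{n_k}(t)(\omega)\to X(t)(\omega)$ at every continuity point of $X(\cdot,\omega)$. Stochastic continuity forces $P(\Delta X(t)=0)=1$, so $X_n(t)\to X(t)$ in probability. For (a)~$\Rightarrow$~(b)(ii), suppose (ii) fails; since the map $\delta\mapsto \limsup_n P(M(X_n;\delta)\geq\epsilon_0)$ is monotone as $\delta\searrow 0$, there exist $\epsilon_0,c>0$ such that $\limsup_n P(M(X_n;\delta)\geq\epsilon_0)\geq c$ for every $\delta>0$. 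A diagonal extraction produces $n_k\to\infty$ and $\delta_k\searrow 0$ with $P(M(X_{n_k};\delta_k)\geq\epsilon_0)\geq c/2$. Along an a.s.\ $d_M$-convergent sub-sub-sequence $(n_{k_j})$, Whitt's theorem gives $\lim_{\delta\searrow 0}\limsup_j M(X_{n_{k_j}};\delta)=0$ a.s.; combined with $\delta_{k_j}\searrow 0$ and the monotonicity of $M$ in $\delta$, this forces $M(X_{n_{k_j}};\delta_{k_j})\to 0$ a.s., and bounded convergence then yields $P(M(X_{n_{k_j}};\delta_{k_j})\geq\epsilon_0)\to 0$, contradicting the bound $c/2$.

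For (b)~$\Rightarrow$~(a) I would use a quantitative deterministic estimate extending Whitt's construction to the Banach setting: for every $f,g\in D([0,T];V)$, every $\delta>0$, and every partition $0=t_0<t_1<\cdots<t_m=T$ of mesh at most $\delta$,
\begin{align*}
d_M(f,g)\leq \max_{0\leq i\leq m} \norm{f(t_i)-g(t_i)} + M(f;\delta)+M(g;\delta)+\delta.
\end{align*}
Granting this estimate, I fix $\epsilon,\eta>0$. Lemma \ref{le.Mdisappears} gives $M(X;\delta)\to 0$ pathwise as $\delta\searrow 0$, and since $M(X;\delta)\leq 2\norm{X}_\infty$ is a.s.\ finite, dominated convergence yields $P(M(X;\delta)\geq\epsilon/4)\to 0$. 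Combined with (ii), I can choose $\delta<\epsilon/4$ with $P(M(X;\delta)\geq\epsilon/4)\leq\eta/3$ and $\limsup_n P(M(X_n;\delta)\geq\epsilon/4)\leq\eta/3$. Fixing a partition of mesh $\leq\delta$ through $\{0,T\}$, (i) applied to each of the finitely many partition points gives $P(\max_i\norm{X_n(t_i)-X(t_i)}\geq\epsilon/4)\to 0$. Splitting the event $\{d_M(X_n,X)\geq\epsilon\}$ via the deterministic estimate yields $P(d_M(X_n,X)\geq\epsilon)\leq\eta$ for all sufficiently large $n$, which is (a).

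The main obstacle is establishing the deterministic estimate that underlies (b)~$\Rightarrow$~(a). Its proof constructs synchronised parametric representations of $f$ and $g$ that visit $(t_i,f(t_i))$ and $(t_i,g(t_i))$ at common parameter values $s_i$ and traverse the extended graphs between partition points; Lemma \ref{le.segandM} bounds the distance from each spatial component to the segment between $f(t_{i-1})$ and $f(t_i)$ (respectively $g(t_{i-1})$ and $g(t_i)$) by $M(f;\delta)$ (respectively $M(g;\delta)$), while the time components stay within $\delta$ of one another. The delicate point is synchronising the two parametric representations on each interval so that \eqref{eq.inM} and \eqref{eq.inM+} convert the local estimates into the claimed bound, paralleling the finite-dimensional construction of \cite[Section 12.5]{whitt02}.
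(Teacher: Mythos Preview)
Your argument for (a)~$\Rightarrow$~(b) is correct and genuinely different from the paper's. The paper proceeds by a direct quantitative estimate: on a high-probability event where $d_M(X_n,X)<\epsilon_1\wedge\delta$ and $X$ varies little near $t$ (controlled via Lemma~\ref{le.stochconsup}), it bounds $\|X_n(t)-X(t)\|$ explicitly through the parametric representations, and similarly bounds $M(X_n;\delta)$ via Lemma~\ref{le.segandM} and inequality~\eqref{eq.inM}. Your subsequence route is cleaner and more conceptual, but it presupposes the Banach-valued version of the deterministic implication in \cite[Theorem~12.5.1]{whitt02} (that $d_M$-convergence forces pointwise convergence at continuity points and the vanishing of $\limsup_\gamma M(f_\gamma;\delta)$ as $\delta\searrow 0$). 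That statement is not available off the shelf for $V$-valued functions; the paper effectively establishes exactly these facts in the course of its direct argument.

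For (b)~$\Rightarrow$~(a) there is a genuine gap. Your displayed inequality
\[
d_M(f,g)\le \max_i\|f(t_i)-g(t_i)\|+M(f;\delta)+M(g;\delta)+\delta
\]
is precisely the heart of the matter, and you have not proved it. The paper's proof of this direction is almost entirely devoted to establishing a version of this estimate (with worse constants): it covers each segment $\seg{f_0(t_{i-1})}{f_0(t_i)}$ by balls $B_{i,j}$ of radius $\epsilon_1/25$, then constructs ordered sequences $(r_{i,j}^n,z_{i,j}^n)$ on each graph $\Gamma_i^n$ by tracking first entries into successive $\partial B_{i,j}$, verifies via the $M$-condition that these sequences are correctly ordered and satisfy the bounds \eqref{eq.inznrn} and \eqref{eq.inzrznrn}, and only then invokes the interpolation step from the proof of \cite[Theorem~12.5.1]{whitt02} to build synchronised parametric representations. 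The ``delicate point'' you flag---that the two graphs might traverse their respective near-segments with incompatible rhythm---is exactly why the ball-covering and first-entry-time construction is needed: it forces both graphs to pass the same landmarks in the same order. Your sketch (``synchronise at partition points, use Lemma~\ref{le.segandM} in between'') does not supply this mechanism, so as written the (b)~$\Rightarrow$~(a) half remains a plan rather than a proof.
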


\begin{proof}
(a)$\,\Rightarrow\,$ (b)
To establish property (i), let $t\in [0,T]$
and $\epsilon_1,\epsilon_2>0$ be given.
Lemma \ref{le.stochconsup} guarantees that there exists
$\delta>0$ such that the set
\begin{align*}
  E(\epsilon_1,\delta):=\bigg\{\omega\in \Omega\colon  \sup_{\substack{s\in [0,T]\\ \abs{s-t}\le \delta}}
   \norm{X(t)(\omega)-X(s)(\omega)}\le \epsilon_1\bigg\},
\end{align*}
satisfy $
  P\big(E(\epsilon_1,\delta)\big)\ge 1-\frac{\epsilon_2}{2}$.
By the assumed condition (a) there is $n_0\in\N$ such that
\begin{align*}
  P\big(d_M(X_n,X)< (\epsilon_1\wedge \delta)\big)\ge 1-\frac{\epsilon_2}{2}
  \qquad\text{for all }n\ge n_0.
\end{align*}
Consequently, the set
\begin{align*}
  F(\epsilon_1,\delta,n):=  E(\epsilon_1,\delta) \cap \{d_M(X_n,X)< (\epsilon_1\wedge \delta)\}
\end{align*}
 satisfies $P\big(F(\epsilon_1,\delta,n)\big)\ge 1-\epsilon_2$ for every $n\ge n_0$.
Define for $\omega\in F(\epsilon_1,\delta, n)$ the functions $f_n:=X_n(\cdot)(\omega)$ and $f:=X(\cdot)(\omega)$. It follows that there are parametric representations
$(r,u)\in \Pi(f)$ and $(r_n, u_n)\in \Pi(f_n)$  satisfying
\begin{align}\label{eq.in1utau}
  \abs{r-r_n}_\infty \vee \norm{u-u_n}_\infty \le (\epsilon_1\wedge \delta )
  \qquad\text{for all }n\ge n_0.
\end{align}
For every $t \in [0,T]$ denote $\tau ,\tau_{n}\in [0,1]$ for $n\ge n_0$ such that
\begin{align*}
  (t,f(t))&=(r(\tau),u(\tau))
  \quad \text{ and }\quad
    (t,f_n(t))=(r_n(\tau_{n}),u_n(\tau_{n})). 
\end{align*}
Since $u(\tau_{n})\in \seg{f(r(\tau_{n})-)}{f(r(\tau_{n}))}$ for every 
$n\ge n_0$,
there is $\alpha_{n} \in [0,1]$ such that
\begin{align*}
  u(\tau_{n})=\alpha_{n} f(r(\tau_{n})-)+ (1-\alpha_{n})f(r(\tau_{n})).
\end{align*}
Since $t=r_n(\tau_{n})$ and $\abs{r(\tau_{n})-r_n(\tau_{n})}\le \delta $
for all $n\ge n_0$ by \eqref{eq.in1utau}, we have $\abs{r(\tau_{n})-t}\le \delta $.
Another application of inequality \eqref{eq.in1utau} implies
\begin{align} \label{eq.in2utau}
  \norm{u(\tau_{n})-u(\tau)}
  &= \norm{\alpha_{n} f(r(\tau_{n})-)+ (1-\alpha_{n})f(r(\tau_{n}))-f(t)}\notag\\
  &\le \sup_{\substack{s\in [0,T]\\ \abs{s-t}\le \delta}}\sup_{\alpha\in [0,1]}
   \norm{\alpha f(s-)+ (1-\alpha)f(s) - f(t)}\notag\\
  & =  \sup_{\substack{s\in [0,T]\\ \abs{s-t}\le \delta}}\sup_{\alpha\in [0,1]}
   \norm{\alpha \big(f(s-)-f(t)\big)+ (1-\alpha)\big(f(s) - f(t)\big)}\notag\\
  &\le \sup_{\substack{s\in [0,T]\\ \abs{s-t}\le \delta}} \norm{f(s-)-f(t)}
   +  \sup_{\substack{s\in [0,T]\\ \abs{s-t}\le \delta}} \norm{f(s)-f(t)}\notag\\
  &\le 2\epsilon_1.
\end{align}
Inequalities \eqref{eq.in1utau} and \eqref{eq.in2utau} imply
for every $n\ge n_0$ that
\begin{align*}
  \norm{f(t)-f_n(t)}
  &= \norm{u(\tau)- u_n(\tau_{n})}\\
  &\le \norm{u(\tau)-u(\tau_{n})} + \norm{u(\tau_{n})-u_n(\tau_{n})}\le 3\epsilon_1,
\end{align*}
which establishes Condition (i) in (b).

In order to show Condition (ii) fix some $\epsilon_1,\epsilon_2>0$. Lemma \ref{le.Mdisappears} guarantees
that there exists $\delta_0>0$ such that the set
\begin{align*}
  G(\epsilon_1,\delta):=\left\{\omega\in\Omega\colon  M(X(\omega),\delta)\le\epsilon_1\right\}
\end{align*}
satisfies $P(G(\epsilon_1,\delta))\ge 1-\tfrac{\epsilon_2}{2}$ for all $\delta\in [0,\delta_0]$.
For each $\delta>0$ there is  $n_0\in\N$  by the assumed condition (a) such that
\begin{align*}
  P\big(d_M(X_n,X)< (\epsilon_1\wedge \delta)\big)\ge 1-\frac{\epsilon_2}{2}\qquad
  \text{for all }n\ge n_0.
\end{align*}
Together we obtain for every $\delta\in [0,\delta_0]$
\begin{align*}
  \liminf_{n\to\infty} P\big( G(\epsilon_1,\delta)\cap \{d_M(X_n,\delta)<(\epsilon_1\wedge \delta)\} \big)
  \ge 1-\epsilon_2.
\end{align*}
Fix $\omega\in G(\epsilon_1,\delta)\cap \{d_M(X_n,\delta)<(\epsilon_1\wedge \delta)\}$ and define $f:=X(\cdot)(\omega)$
and $f_n:=X_n(\cdot)(\omega)$. It follows that there are parametric representations $(r,u)\in \Pi(f)$ and $(r_n,u_n)\in \Pi(f_n)$
satisfying
\begin{align*}
 \abs{r-r_n}_\infty\vee \norm{u-u_n}_\infty  \le (\epsilon_1\wedge \delta)\qquad  \text{for all }n\ge n_0.
\end{align*}
For every $0\le t_1\le t_2\le t_3 $ denote $\tau_i,\tau_{i,n}\in [0,1]$ such that $(t_i,f(t_i))=(r(\tau_i),u(\tau_i))$ and
$(t_i,f_n(t_{i,n}))=(r_n(\tau_{i,n}), u_n(\tau_{i,n}))$ for $i=1,2,3$.
Inequality \eqref{eq.inM} and Lemma \ref{le.segandM} imply  for every $n\ge n_0$ that
\begin{align*}
M\big(f_n(t_1),f_n(t_2),f_n(t_3)\big)
&= M\big(u_n(\tau_{1,n}), u_n(\tau_{2,n}), u_n(\tau_{3,n})\big)\\
&\le M\big(u(\tau_{1,n}), u(\tau_{2,n}), u(\tau_{3,n})\big)+ 3 \norm{u-u_n}_\infty\\
&\le M(f,\delta) + 3\norm{u-u_n}_\infty\\
&\le \epsilon_1+3\epsilon_1=4\epsilon_1,
\end{align*}
which completes the proof of the implication (a)$\,\Rightarrow\,$(b).

\noindent
(b) $\Rightarrow$ (a).
Let $\epsilon_1, \epsilon_2>0$ be fixed. Define for each $n\in \N$ and $\delta>0$ the sets
\begin{align*}
G(\epsilon_1, \delta )&:= \Big\{\omega\in \Omega\colon  M(X(\omega), \delta)< \tfrac{\epsilon_1}{512}\Big\},\\
  G_n(\epsilon_1,\delta)&:= \Big\{\omega\in\Omega\colon  M(X_n(\omega),\delta)<  \tfrac{\epsilon_1}{512}\Big\}.
\end{align*}
Condition (ii) guarantees that there exist $\delta_1>0$  and $n_1\in \N$
such that
\begin{align*}
  \limsup_{n\to\infty} P\big(G_n^c(\epsilon_1,\delta)\big)\le \frac{\epsilon_2}{8}
  \qquad\text{for all }\delta \in [0,\delta_1].
\end{align*}
Consequently, for each $\delta \in [0,\delta_1]$ there exists $n_1=n_1(\delta)$ such that
\begin{align}\label{eq.probGn}
  \sup_{n\ge n_1} P\big(G_n^c(\epsilon_1,\delta)\big)\le \frac{\epsilon_2}{4},
\end{align}
whereas Lemma \ref{le.Mdisappears} implies that there exist  $\delta_2>0$ such that
\begin{align}\label{eq.probG}
  P\big(G(\epsilon_1,\delta)\big)\ge 1-\frac{\epsilon_2}{4}   \qquad\text{for all }\delta \in [0,\delta_2].
\end{align}
Define for $c>0$ the set
\begin{align*}
  B(c):= \Big\{\omega\in \Omega\colon  \norm{X(\omega)}_\infty \le c-1 \Big\}.
\end{align*}
Since $X$ is a random variable with values in $D([0,T];V)$ there exists $c>0$ such that
\begin{align}\label{eq.boundc}
  P\big(B(c)\big)\ge 1-\frac{\epsilon_2}{4}.
\end{align}
Choose a partition $\pi=(t_i)_{i=0}^m$ of the interval $[0,T]$ such that
\begin{align*}
0=t_0< t_1<\dots < t_m=T \quad\text{and}\quad  \max_{i\in \{1,\dots, m\}}\abs{t_i-t_{i-1}}\le \min\{\delta_1,\delta_2,\tfrac{\epsilon_1}{16}\},
\end{align*}
and define the set
\begin{align*}
  F_n(\epsilon_1,\pi):=\Big\{ \omega\in \Omega\colon  \max_{i=1,\dots, m} \norm{X_n(t_i)(\omega)-X(t_i)(\omega)}< \tfrac{\epsilon_1}{512}\Big\}. \end{align*}
Condition (i) guarantees that there exists  $n_2\in \N$ such that
\begin{align}\label{eq.probFn}
  \sup_{n\ge n_2} P\Big( F_n^c(\epsilon_1,\pi)\Big)\le \frac{\epsilon_2}{4}.
\end{align}
It follows  from \eqref{eq.probGn} to  \eqref{eq.probFn} that for $\delta:=\delta_1\wedge \delta_2$
the set $E_n(\epsilon_1,\delta,c,\pi):=G_n(\epsilon_1,\delta)\cap G(\epsilon_1,\delta)\cap B(c)\cap F_n(\epsilon_1,\pi)$
obeys
\begin{align*}
  P\Big( E_n(\epsilon_1,\delta,c,\pi)\Big)\ge 1-\epsilon_2
  \qquad\text{for all } n\ge n_1 \vee n_2.
\end{align*}
For $\omega\in E_n(\epsilon_1,\delta,c,\pi)$ define $f_0(\cdot):=X(\cdot)(\omega)$
 and
$f_n(\cdot):= X_n(\cdot)(\omega)$. Let $N$ denote the integers $\{n_1\vee n_2,\dots\}$ and
$N_0$ the union $N\cup \{0\}$.
For $n\in N_0$ and $i\in\{1,\dots, m\}$ let $\Gamma_i^n$ be the graph of $f_n$ between $(t_{i-1},f_n(t_{i-1}))$ and $(t_i,f_n(t_i))$.
By defining $d_i$ to be the smallest integer larger than
$
  \norm{f_0(t_{i-1})-f_0(t_i)}\frac{16}{\epsilon_1}
$
we can divide the segment $\seg{f_0(t_{i-1})}{f_0(t_i)}$ in
equidistant points
\begin{align*}
  \xi_{i,j}:=f_0(t_{i-1})+ \alpha_{i,j}(f_0(t_i)-f_0(t_{i-1}))
  \quad\text{for } \alpha_{i,j}:=\frac{j}{d_i}, \quad j=0,\dots, d_i.
\end{align*}
We claim that for each $i\in \{1,\dots, m\}$  the balls
$
  B_{i,j}:=\big\{ h\in V\colon  \norm{h-\xi_{i,j}}<\tfrac{\epsilon_1}{25}\big\}
$
covers each of the graphs $\Gamma_n^i$ for $n\in N_0$, i.e.
\begin{align}\label{eq.coverGamma}
\pi(\Gamma_n^i)\subseteq \bigcup_{j=0}^{d_i} B_{i,j}
\qquad\text{for all }n\in N_0.
\end{align}
Indeed, let $(t,h)\in \Gamma^n_i$ be of the form $h=\alpha f_n(t-)+(1-\alpha)f_n(t)$  for some $\alpha\in [0,1]$.
Since $t\in [t_{i-1},t_i]$ and $t_i-t_{i-1}\le \delta_2$ it follows from the definition of $M(f_n,\delta_2)$ that
there exists $\ell_n,r_n\in \seg{f_n(t_{i-1})}{f_n(t_i)}$ such that
\begin{align*}
  \norm{f_n(t-)- \ell_n}\le \tfrac{\epsilon_1}{512}
\quad\text{and}\quad  \norm{f_n(t)- r_n}\le \tfrac{\epsilon_1}{512}
\qquad\text{for all }n\in N_0.
\end{align*}
Since $u_n:=\alpha\ell_n + (1-\alpha) r_n\in \seg{f_n(t_{i-1})}{f_n(t_i)}$
we have $M(f_n(t_{i-1}),u_n, f_n(t_i))=0$. Inequality \eqref{eq.inM} implies that
\begin{align*}
  M(f_0(t_{i-1}),u_n,f_0(t_i))
  &\le  M(f_n(t_{i-1}),u_n,f_n(t_i)) +\norm{f_0(t_{i-1})-f_n(t_{i-1})} + \norm{f_0(t_i)-f_n(t_i)}\\
  &\le 0 + 2\max_{i\in\{0,\dots, m\}}\norm{f_n(t_i)-f_0(t_{i})}\\
  &<  2\frac{\epsilon_1}{512}.
\end{align*}
Consequently, there exists $u_0\in \seg{f_0(t_{i-1})}{f_0(t_i)}$ such that
$
  \norm{u_n-u_0}\le \frac{2\epsilon_1}{512}.
$
(If $n=0$ we can choose $u_0=u_n$.)
Since $u_0\in \seg{f_0(t_{i-1})}{f_0(t_i)}$ we can choose the closest node $\xi_{i,j}$ for
some $j=0,\dots, d_i$ such that
$\norm{u_0-\xi_{i,j}}\le \frac{\epsilon_1}{32}$. It follows
\begin{align*}
  \norm{h-\xi_{i,j}}
 & =\norm{\big(\alpha f_n(t-)+(1-\alpha)f_n(t)\big) -\xi_{i,j}}\\
&\le \alpha \norm{f_n(t-)- \ell_n}
  + (1-\alpha) \norm{f_n(t)- r_n}
  + \norm{u_n-u_0}+ \norm{u_0 - \xi_{i,j}}\\
&\le \frac{\epsilon_1}{512} + \frac{2\epsilon_1}{512}+\frac{\epsilon_1}{32}
\le \frac{\epsilon_1}{25},
\end{align*}
which shows \eqref{eq.coverGamma}.

In the following, we define for each $i\in \{1,\dots, m\}$ and $n\in N_0$ an ordered
sequence of points
\begin{align*}
  \big((r_{i,0}^n,z_{i,0}^n),\dots, (r_{i,m_i}^n,z_{i,m_i}^n)\big)
\in\big( \Gamma^n_i\times \dots \times\Gamma^n_i\big),
\end{align*}
for some $m_i\in \N$, independent of $n$, such that they satisfy for every $j=1,\dots, m_i$:
\begin{align}\label{eq.inznrn}
\sup_{(z,r)\in \Gamma^{n}_{i,j}}\max \left\{ \norm{z-z_{i,j-1}^n},\norm{z-z_{i,j}^n},
 \abs{r-r_{i,j-1}^n}, \abs{r-r_{i,j}^n}\right\}\le \frac{\epsilon_1}{4},
\end{align}
where $\Gamma^n_{i,j}:=\{(r,z)\in \Gamma^n_i\colon  (r_{i,j-1}^n,z_{i,j-1}^n, )
\le (r,z)\le (r_{i,j}^n,z_{i,j}^n)\}$.

If $d_i=1$ we define $m_i=1$ and for every  $n\in N_0$ the points:
\begin{align*}
   (r_{i,0}^n,z_{i,0}^n)&:= (t_{i-1},f_n(t_{i-1})), \quad  (r_{i,1}^n,z_{i,1}^n):= (t_{i},f_n(t_{i})).
\end{align*}
It follows from \eqref{eq.coverGamma} that for each $(r,z)\in \Gamma^n_i$ there is $k\in\{0,1\}$ such that
  $z\in B_{i,k}$. For $k=0$ this results in
\begin{align}\label{eq.di=1}
 \norm{z-z_{i,0}^n}\vee \norm{z-z_{i,1}^n}
 &\le \norm{z-\xi_{i,0}}+ \norm{\xi_{i,0}-\xi_{i,1}}+ \norm{\xi_{i,1}-f_n(t_i)}\notag\\
 &\le \frac{\epsilon_1}{25}+\frac{\epsilon_1}{16}+\frac{\epsilon_1}{512}\1_{ N}(n)\le \frac{\epsilon_1}{4},
\end{align}
and analogously for $k=1$. Since each $r\in [r_{i,0}^n,r_{i,1}^n]$
satisfies $\abs{r-r_{i,j}^n}\le \abs{r_{i,0}^n-r_{i,1}^n}\le \tfrac{\epsilon_1}{16}$
for $j\in \{0,1\}$ we obtain the inequality \eqref{eq.inznrn}.

If $d_i=2$ we define $m_i=3$ but we distinguish two cases. Firstly, assume that $\pi(\Gamma^n_i)\subseteq B_{i,0}\cup B_{i,2}$. Then we define for each $n\in N_0$ the points
\begin{align*}
   (r_{i,0}^n,z_{i,0}^n,)&:= (t_{i-1},f_n(t_{i-1})), \quad  (r_{i,3}^n,z_{i,3}^n):= (t_{i},f_n(t_{i}))
\end{align*}
and we choose $z_{i,1}^n,z_{i,2}^n\in \pi(\Gamma^n_i)\cap B_{i,0}\cap B_{i,2}$
and $r_{i,1}^n, r_{i,2}^n\in [0,1]$ such that
\begin{align*}
  (r_{i,0}^n,z_{i,0}^n ) < (r_{i,1}^n,z_{i,1}^n )< (r_{i,2}^n,z_{i,2}^n )<(r_{i,3}^n,z_{i,3}^n ).
\end{align*}

In the case $\pi(\Gamma_n^i)\not\subseteq B_{i,0}\cup B_{i,2}$ we define for every $n\in N_0$ the points
\begin{align*}
  (r_{i,0}^n,z_{i,0}^n)&:=(t_{i-1},f_n(t_{i-1})),\\
  (r_{i,1}^n,z_{i,1}^n)&:=\inf\{(r,z)\in \Gamma_{n}^i\colon  (r,z)>(r_{i,0}^n,z_{i,0}^n) \text{ and } z\in \partial B_{i,1}\}, \\
    (r_{i,2}^n,z_{i,2}^n)&:=\inf\{(r,z)\in \Gamma_{n}^i\colon  (r,z)>(r_{i,1}^n,z_{i,1}^n) \text{ and } z\in \partial B_{i,2}\},\\
  (r_{i,3}^n,z_{i,3}^n)&:=(t_{i},f_n(t_{i})).
\end{align*}

If $d_i\ge 3$ we define $m_i=d_i+1$. Since $\norm{\xi_{i,j}-\xi_{i,j-1}}\ge \tfrac{d_i-1}{d_i}\frac{\epsilon_1}{16}>
\frac{\epsilon_1}{25}$ we have $B_{i,j}\cap B_{i,j+2}=\emptyset$ for all
$j=1,\dots, d_i$. Thus, we can define the following increasing sequence:
\begin{align*}
    (r_{i,0}^n,z_{i,0}^n)&:=(t_{i-1},f_n(t_{i-1}) ),\\
  (r_{i,j}^n,z_{i,j}^n)&:=\inf\{(r,z)\in \Gamma^{n}_i\colon  (r,z)>(r_{i,j-1}^n,z_{i,j-1}^n) \text{ and } z\in \partial B_{i,j}\}, \quad j=1,\dots, d_i, \\
  (r_{i,m_i}^n,z_{i,m_i}^n)&:=( t_{i},f_n(t_{i})).
\end{align*}
In both cases for $d_i=2$ and in the case $d_i\ge 3$ it follows for each $n\in N$ that
\begin{align}\label{eq.z0d}
\begin{split}
  \norm{z_{i,0}^n-\xi_{i,0}}&=\norm{f_n(t_{i-1})-f_0(t_{i-1})}< \frac{\epsilon_1}{512}, \\
  \norm{z_{i,m_i}^n-\xi_{i,d_i}}&=\norm{f_n(t_{i})-f_0(t_i)}< \frac{\epsilon_1}{512}.
\end{split}
\end{align}
Consequently,  $z_{i,0}^n\in B_{i,0}$ and $z_{i,m_i}^n\in B_{i,d_i}$ and thus
$z_{i,0}^n, z_{i,1}^n\in \bar{B}_{i,0}$ and $z_{i,m_i-1}^n,z_{i,m_i}^n \in \bar{B}_{i,d_i}$
 for every $n\in N_0$.
Since  $z_{i,j-1}^n, z_{i,j}^n \in \bar{B}_{i,j-1}$ for all $j=2,\dots, m_i-1$ by construction,
we obtain
\begin{align}\label{eq.distancezs}
  \norm{z_{i,j-1}^n-z_{i,j}^n}\le 2 \frac{\epsilon_1}{25}
  \qquad\text{for all }j\in \{1,\dots, m_i\},\, n\in N_0.
\end{align}
If $(r,z)\in\Gamma_{i,j}^n$ for some $j\in \{1,\dots, m_i\}$ and $n\in N_0$ then
$M(z_{i,j-1}^n,z, z_{i,j}^n)< \tfrac{\epsilon_1}{512}$ since $\abs{r_{i,j-1}^n-r_{i,j}^n}
\le \abs{t_{i-1}-t_i}\le \delta_1$. Thus, there exists $z_0\in \seg{z_{i,j-1}^n, z_{i,j}^n}$ such
that $\norm{z-z_0}\le \tfrac{\epsilon_1}{512}$. Together with \eqref{eq.distancezs} it follows
for each $(r,z)\in \Gamma_{i,j}^n$ and $k\in \{j-1,j\}$
for $j={1,\dots, m_i}$ that
\begin{align*}
  \norm{z-z_{i,k}^n}
  \le \norm{z-z_0}+ \norm{z_0-z_{i,k}^n}
  \le \norm{z-z_0}+ \norm{z_{i,j-1}^n-z_{i,j}^n}
  \le \frac{\epsilon_1}{512}+ 2\frac{\epsilon_1}{25}\le \frac{\epsilon_1}{4}.
\end{align*}
Since we also have that
\begin{align*}
  \abs{r-r_{i,k}^n}\le \abs{t_i-t_{i-1}}\le \frac{\epsilon_1}{16},
\end{align*}
we obtain \eqref{eq.inznrn}.

The constructed sequence exhibits a further property: since for every $i\in\{1,\dots, m\}$ and $n\in N$ the points $z_{i,j}^0$ and $z_{i,j}^n$ are in the same closed ball $\bar{B}_{i,j}$
for $j\in \{0,m_i\}$   by \eqref{eq.z0d} and
for $j\in \{1,\dots, m_i-1\}$ by construction , it follows that
\begin{align*}
    \sup_{\substack{i\in \{1,\dots, m\}\\ j\in \{0,\dots, m_i\}}}  \left\{ \norm{z_{i,j}^0-z_{i,j}^n}\right\}\le 2\frac{\epsilon_1}{25}.
\end{align*}
Since $\abs{r_{i,j}^0-r_{i,j}^n}\le \abs{t_i-t_{i-1}}\le \tfrac{\epsilon_1}{16}$ we obtain that
\begin{align}\label{eq.inzrznrn}
    \sup_{\substack{i\in \{1,\dots, m\}\\ j\in \{0,\dots, m_i\}}}\max  \left\{ \norm{z_{i,j}^0-z_{i,j}^n},\abs{r_{i,j}^0-r_{i,j}^n}\right\}\le 2\frac{\epsilon_1}{25}.
\qquad\text{for all }n\in N.
\end{align}
By gluing together we obtain for each $n\in N_0$ an ordered sequence
\begin{align*}
  \big((r_{1,0}^n,z_{1,0}^n),\dots, (r_{1,m_1}^n,z_{1,m_1}^n),
   (r_{2,0}^n,z_{2,0}^n),\dots, ( r_{m,m_m}^n,z_{m,m_m}^n)\big) \in
   (\Gamma_n\times \dots \times \Gamma_n),
\end{align*}
satisfying the inequalities \eqref{eq.inznrn} and \eqref{eq.inzrznrn}.
It follows as in the proof of the implication $(vi)\Rightarrow (i)$ of Theorem 12.5.1 in \cite{whitt02}, that one can define
for every parametric representation $(r,u)\in \Pi(f_0)$ a parametric representation $(r_n,u_n)\in \Pi(f_n)$ such that
\begin{align*}
 \abs{r-r_n}_\infty \vee \norm{u-u_n}_\infty  \le 2\frac{\epsilon_1}{4}+ \frac{2\epsilon_1}{25}
  \qquad\text{for all }n\in N.
\end{align*}
Thus, we have shown that for each $\omega\in E_n(\epsilon_1,\delta,c,\pi)$ we have
\begin{align*}
  d_M(X(\omega),X_n(\omega))\le 2\frac{\epsilon_1}{4}+ \frac{2\epsilon_1}{25}
    \qquad\text{for all }n\in N,
\end{align*}
which completes the proof.
\end{proof}


\subsection{Product topology}\label{se.product-prob}

In this part we equip the space $D([0,T];V)$ with the product topology $d_M^{{\,e}}$
for a fixed Schauder basis $e:=(e_k)_{k\in\N}$ of $V$ with bi-orthogonal sequence $(e_k^\ast)_{k\in\N}$,  and we
 consider the convergence in probability of stochastic processes $(X_n)_{n\in\N}$ to a
stochastic process $X$. For stochastic process $X$ and $(X_n)_{n\in\N}$ with c{\`a}dl{\`a}g trajectories we  say that $(X_n)_{n\in\N}$ converges to $X$ in
probability in $\big(D([0,T];V),d_M^{\,e}\big)$ if
\begin{align*}
  \lim_{n\to\infty} P\Big( d_M^{\,e} (X_n,X)\ge \epsilon\Big)=0
  \qquad\text{for all }\epsilon>0.
\end{align*}
Since the product topology corresponds to point-wise convergence, the stochastic processes $(X_n)_{n\in\N}$ converges to $X$ in probability in $\big(D([0,T];V),d_M^{\,e}\big)$ if and only
if for every $k\in\N$
\begin{align}\label{eq.prod=point}
  \lim_{n\to\infty} P\Big( d_M\big(\scapro{X_n}{e_k^\ast},\scapro{X}{e_k^\ast}\big)\ge \epsilon\Big)=0
  \qquad\text{for all }\epsilon>0,
\end{align}
see \cite[Lemma 4.4.4]{Kallenberg-02}.
Consequently, we obtain as an analogue of Theorem \ref{th.stochconv=fdd+M}:
\begin{corollary}
\label{co.productcon=M+ffd}
Let $(e_k)_{k\in\N}$ be a Schauder basis of $V$ with bi-orthogonal sequence $(e_k^\ast)_{k\in\N}$.
For $V$-valued, stochastically continuous stochastic processes $(X(t)\colon t\in [0,T])$ and
 $(X_n(t)\colon t\in [0,T])$, $n\in\N$, with c{\`a}dl{\`a}g trajectories the following are equivalent:
\begin{enumerate}
\item[{\rm (a)}] $X_n\to X$ in probability in $\big(D([0,T];V),\, d_M^{\,e}\big)$ as $n\to\infty$;
\item[{\rm (b)}] the following two conditions are satisfied for every $k\in\N$:
  \begin{enumerate}
  \item[{\rm (i)}] for every $t\in [0,T]$ we have
  $\lim_{n\to\infty}\scapro{X_n(t)}{e_k^\ast}= \scapro{X(t)}{e_k^\ast} $ in probability;
  \item[{\rm (ii)}] for every $\epsilon>0$ the oscillation function obeys
  \begin{align*}
      \lim_{\delta\searrow 0} \limsup_{n\to\infty} P\big(M(\scapro{X_n}{e_k^\ast},\delta)\ge\epsilon\big)=0.
  \end{align*}
\end{enumerate}
\end{enumerate}
\end{corollary}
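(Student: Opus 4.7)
The plan is to reduce this statement to Theorem \ref{th.stochconv=fdd+M} applied coordinate by coordinate. The key bridge is the characterisation of convergence in the product metric: by the standard fact about product topologies recalled in \eqref{eq.prod=point} (citing Kallenberg's Lemma 4.4.4), a sequence $X_n$ converges to $X$ in probability in $(D([0,T];V),d_M^{\,e})$ if and only if for every $k\in\N$ the sequence of scalar-valued c\`adl\`ag processes $\scapro{X_n}{e_k^\ast}$ converges to $\scapro{X}{e_k^\ast}$ in probability in $(D([0,T];\R),d_M)$.

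Once this reduction is in place, the corollary follows by invoking Theorem \ref{th.stochconv=fdd+M} separately for each coordinate $k\in\N$. Specifically, I would first observe that since $X$ and each $X_n$ are stochastically continuous and $e_k^\ast\in V^\ast$ is continuous linear, the real-valued processes $\scapro{X}{e_k^\ast}$ and $\scapro{X_n}{e_k^\ast}$ are also stochastically continuous with c\`adl\`ag trajectories. Therefore Theorem \ref{th.stochconv=fdd+M} applies to them and yields that $\scapro{X_n}{e_k^\ast}\to \scapro{X}{e_k^\ast}$ in probability in $(D([0,T];\R),d_M)$ if and only if (i) $\scapro{X_n(t)}{e_k^\ast}\to \scapro{X(t)}{e_k^\ast}$ in probability for every $t\in[0,T]$, and (ii) the oscillation condition \eqref{eq:M} holds for the processes $\scapro{X_n}{e_k^\ast}$.

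Combining the two equivalences over all $k\in\N$ immediately gives (a)$\,\Leftrightarrow\,$(b). There is no genuine obstacle here: all the real work was carried out in Theorem \ref{th.stochconv=fdd+M}, and the only thing to verify is the compatibility of ``convergence in probability in the product metric'' with ``coordinatewise convergence in probability'', which is a general fact about countable products of Polish (or merely separable metric) spaces. The proof is therefore essentially a bookkeeping step, and I would present it in a few lines pointing to \eqref{eq.prod=point} and Theorem \ref{th.stochconv=fdd+M}.
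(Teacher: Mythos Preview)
Your proposal is correct and matches the paper's own proof essentially verbatim: the paper simply writes ``Follows immediately from Theorem~\ref{th.stochconv=fdd+M} and \eqref{eq.prod=point}'', which is precisely the reduction you describe.
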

\begin{proof}
  Follows immediately from Theorem \ref{th.stochconv=fdd+M} and \eqref{eq.prod=point}.
\end{proof}

\subsection{Weak topology}

Recall that the weak $M_1$ topology in an infinite dimensional Hilbert space is not metrisable.
A sequence $(X_n)_{n\in\N}$ of stochastic processes $(X_n)_{n\in\N}$ with trajectories in  $D([0,T];V)$ is said to {\em converge weakly in $M_1$ in probability} to a process $X$ with trajectories in $D([0,T];V)$
if for all $v^\ast\in V^\ast$ we have
\begin{align*}
\lim_{n\to\infty} \scapro{X_n}{v^\ast}=\scapro{X}{v^\ast}
 \qquad\text{in probability in $\big(D([0,T];\R),d_M\big)$.}
\end{align*}
Equivalently, by using the metric $d_M$ in $D([0,T];\R)$, this convergence takes place if and only if for each $v^\ast\in V^\ast$ we have
\begin{align}\label{eq.weak=point}
  \lim_{n\to\infty} P\Big( d_M\big(\scapro{X_n}{v^\ast},\scapro{X}{v^\ast}\big)\ge \epsilon\Big)=0
  \qquad\text{for all }\epsilon>0.
\end{align}
By comparing \eqref{eq.weak=point} with \eqref{eq.prod=point} one can colloquially describe
the difference between convergence in the weak sense and in the product topology by testing the
one-dimensional projections either with all elements, i.e.\ $\scapro{X_n}{v^\ast}$
for all $v^\ast\in V^\ast$, or only with the bi-orthogonal elements of $V^\ast$, i.e.\ $\scapro{X_n}{e_k^\ast}$
for all $k\in\N$.  Clearly, the first one is independent of the chosen basis.
\begin{corollary}\label{co.weakcon=M+ffd}
For $V$-valued, stochastically continuous stochastic processes $(X(t)\colon t\in [0,T])$ and
 $(X_n(t)\colon t\in [0,T])$, $n\in\N$, with c{\`a}dl{\`a}g trajectories the following are equivalent:
\begin{enumerate}
\item[{\rm (a)}] $X_n\to X$ weakly in $M_1$ in probability in $D([0,T];V)$ as $n\to\infty$;
\item[{\rm (b)}] the following two conditions are satisfied for every $v^\ast\in V^\ast$:
  \begin{enumerate}
  \item[{\rm (i)}] for every $t\in [0,T]$ we have $\lim_{n\to\infty} \scapro{X_n(t)}{v^\ast}=\scapro{X(t)}{v^\ast}$
    in probability;
  \item[{\rm (ii)}] for every $\epsilon>0$ the oscillation function obeys
  \begin{align*}
      \lim_{\delta\searrow 0} \limsup_{n\to\infty} P\big(M(\scapro{X_n}{v^\ast},\delta)\ge\epsilon\big)=0.
  \end{align*}
\end{enumerate}
\end{enumerate}
\end{corollary}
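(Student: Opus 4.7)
The plan is to reduce the statement to the real-valued case, Theorem~\ref{th.stochconv=fdd+M}, applied separately to each one-dimensional projection $\langle X_n, v^\ast\rangle$, $\langle X, v^\ast\rangle$.

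First, I would verify that for each fixed $v^\ast\in V^\ast$ the processes $\langle X_n,v^\ast\rangle$ and $\langle X,v^\ast\rangle$ satisfy the hypotheses of Theorem~\ref{th.stochconv=fdd+M} with $V$ replaced by $\R$. Since $v^\ast\colon V\to \R$ is continuous and linear, c\`adl\`ag paths of $X_n$ and $X$ are mapped to c\`adl\`ag paths of $\langle X_n,v^\ast\rangle$ and $\langle X,v^\ast\rangle$, and stochastic continuity is preserved: if $X(t_n)\to X(t)$ in probability in $V$, then $\langle X(t_n),v^\ast\rangle\to \langle X(t),v^\ast\rangle$ in probability in $\R$.

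Next, I would invoke the definition of weak $M_1$ convergence in probability together with the reformulation \eqref{eq.weak=point}: the statement $X_n\to X$ weakly in $M_1$ in probability is, by definition, equivalent to the convergence
\begin{align*}
\langle X_n,v^\ast\rangle \to \langle X,v^\ast\rangle \qquad\text{in probability in }\big(D([0,T];\R),d_M\big)
\end{align*}
for every $v^\ast\in V^\ast$. Fixing $v^\ast$ and applying Theorem~\ref{th.stochconv=fdd+M} to the real-valued processes $\langle X_n,v^\ast\rangle$ and $\langle X,v^\ast\rangle$, this convergence is in turn equivalent to conditions (i) and (ii) of the present corollary for that particular $v^\ast$. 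Quantifying over all $v^\ast\in V^\ast$ then yields the equivalence (a)$\Leftrightarrow$(b).

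There is no real obstacle here; the only point to be slightly careful about is the passage from the abstract notion of weak $M_1$ convergence in probability (which is not metrisable) to the testable characterisation \eqref{eq.weak=point}, which is already recorded in the excerpt. Once that is in place the corollary is a direct one-line application of Theorem~\ref{th.stochconv=fdd+M} to each scalar projection, exactly in the same way that Corollary~\ref{co.productcon=M+ffd} follows via \eqref{eq.prod=point}, the only change being that testing is performed against all of $V^\ast$ rather than against the biorthogonal functionals $(e_k^\ast)_{k\in\N}$.
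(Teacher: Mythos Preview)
Your proposal is correct and follows exactly the paper's approach: the paper's proof is the single line ``Follows immediately from Theorem~\ref{th.stochconv=fdd+M} and \eqref{eq.weak=point}.'' Your write-up simply spells out the routine verification that the scalar projections inherit c\`adl\`ag paths and stochastic continuity, which is implicit in that line.
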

\begin{proof}
  Follows immediately from Theorem \ref{th.stochconv=fdd+M} and \eqref{eq.weak=point}.
\end{proof}

\begin{remark}\label{re.path-cylindrical}
In this part we always require that the considered stochastic processes have c{\`a}dl{\`a}g paths in the Hilbert space $V$. If $V$ is infinite dimensional this might be a too restrictive assumption. In fact the definition of weak convergence  only requires that
the stochastic processes have {\em cylindrical c{\`a}dl{\`a}g trajectories}, that is
\begin{align*}
  \big(\scapro{X(t)}{v^\ast}\colon t\in [0,T]\big), \qquad
   \big(\scapro{X_n(t)}{v^\ast}\colon t\in [0,T]\big), \quad n\in\N,
\end{align*}
have c{\`a}dl{\`a}g trajectories for all $v^\ast\in V^\ast$. Since Corollary \ref{co.weakcon=M+ffd} is just
proved by the application of Theorem \ref{th.stochconv=fdd+M} to these real-valued stochastic processes we could easily soften our assumption on the path regularities of the considered stochastic processes accordingly.

The same comment applies to convergence in the product topology $(D([0,T];V),d_M^{\,e})$
for a Schauder basis $e=(e_k)_{k\in\N}$ of $V$ with bi-orthogonal sequence $e=(e_k^\ast)_{k\in\N}$. Here it is sufficient to
require that the considered stochastic processes have {\em $D$-cylindrical c{\`a}dl{\`a}g trajectories} for
$D=\{e_1^\ast,e_2^\ast,\dots\}$, that is
\begin{align*}
  \big(\scapro{X(t)}{e_k^\ast}\colon t\in [0,T]\big), \qquad
   \big(\scapro{X_n(t)}{e_k^\ast}\colon t\in [0,T]\big), \quad n\in\N,
\end{align*}
have c{\`a}dl{\`a}g trajectories  for all $k\in\N$. The notions of cylindrical c{\`a}dl{\`a}g 
and $D$-cylindrical c{\`a}dl{\`a}g paths  can be found in \cite{PesZab13}.

In order to have a clearer presentation of our paper, and not at least since our focus is rather on the different modes of convergence  instead of the subtle issue of temporal regularity, we require stochastic processes to have
c{\`a}dl{\`a}g trajectories in the underlying Banach space. However, if necessary, it should be obvious how to extend our results to stochastic processes with c{\`a}dl{\`a}g trajectories  only in the cylindrical sense.
\end{remark}

\section{Convergence of stochastic convolution integrals}\label{se.integral-convergence}

In this section we apply our results of Section \ref{se.conv-prob} to the convergence of stochastic convolution integrals with 
respect to L\'evy process. Although it would be possible to continue with the general setting of Banach 
spaces with a Schauder basis, we restrict ourselves here to Hilbert spaces in order to make use of 
standard integration theory as in \cite{Chojnowska87}. In this case, we identify the dual spaces $U^\ast$ 
and $V^\ast$ with the separable Hilbert spaces $U$ and $V$.

Let $\xi$ be an infinitely divisible Radon measure on $\Borel(U)$. Then the characteristic function of $\xi$ is given by
\begin{align*}
 \phi_{\xi}\colon U\to\C, \qquad \phi_{\xi}(u)=\exp\big(\Psi(u)\big),
\end{align*}
where the \emph{L{\'e}vy symbol} $\psi\colon U\to\C$ is defined by
\begin{align*}
 \Psi(u)= i\scapro{a}{u}-\tfrac{1}{2} \scapro{Qu}{u}
   +\int_U\left(e^{i\scapro{u}{r}}-1- i\scapro{u}{r}   \1_{B_U}(r)\right)\nu(dr)  ,
\end{align*}
where $a\in U$, $Q\colon U \to U$ is the covariance operator of a Gaussian Radon
measure on $\Borel(U)$ and $\nu$ is a $\sigma$-finite measure on $\Borel(U)$ with $\nu(\{0\})=0$
and
\begin{align*}
  \int_U \big(\norm{r}^2\wedge 1 \big)\,\nu(dr)<\infty.
\end{align*}
Consequently, the triplet $(a,Q,\nu)$ characterises the distribution of the Radon measure $\xi$ and thus,
it is  called it the \emph{characteristics of  $\xi$}. If $X$ is an $U$-valued random variable which is infinitely
divisible then we call the characteristics of its probability distribution the characteristics of $X$.
The L{\'e}vy symbol $\Psi\colon U^\ast\to \C$ is
sequentially weakly continuous and satisfies
\begin{align}
\label{eq.Levysymbolup}
\abs{\Psi(u)}\le c(1+\norm{u}^2)
\qquad\text{for all }u\in U,
\end{align}
for a constant $c>0$ depending on the underlying infinitely divisible distribution.

Let $\{\F_t\}_{t\ge 0}$ be a filtration for the probability space
$(\Omega,\A,P)$. An adapted stochastic process $L:=(L(t)\colon t\ge 0)$ with values in $U$ is called a
\emph{L{\'e}vy process} if $L(0)=0$ $P$-a.s., $L$ has independent and stationary increments and
$L$ is continuous in probability. It follows that there exists a
version of $L$ with paths which are continuous from the right and have limits from the left
(c{\`a}dl{\`a}g paths). In the sequel we always assume that a L{\'e}vy process has c{\`a}dl{\`a}g paths.
Clearly, the random variable $L(1)$ is infinitely divisible and we call its characteristics the characteristics of $L$.

In the work \cite{Chojnowska87}, Chojnowska--Michalik introduces a theory of stochastic integration for
deterministic, operator-valued integrands with respect to a $U$-valued L{\'e}vy process.
Another approach in a more general setting can be found in \cite{Riedle13} but we follow here
\cite{Chojnowska87}. Let $V$ be another separable Hilbert space and define
\begin{align*}
  \H^2(U,V):=\left\{F\colon [0,T]\to \L(U,V)\colon  F \text{ is measurable, } \int_0^T\norm{F(s)}_{U\to V}^2\,ds<\infty
  \right\}.
\end{align*}
For $F\in \H^2(U,V)$ we denote by $F^\ast(t)$ the adjoint operator $(F(t))^\ast\colon V\to U$ for each $t\in [0,T]$.
In \cite{Chojnowska87}, the author starts with step functions in $\H^2(U,V)$ to define a stochastic integral and finally shows, that
for each element in $\H^2(U,V)$ this stochastic integral exists as the limit of the stochastic integrals for
step functions in $\H^2(U,V)$ in the topology of convergence in probability. We denote this stochastic integral
for $F\in \H^2(U,V)$ with respect to the L{\'e}vy process $L$ by
\begin{align*}
  I(F):=\int_0^T F(s)\,dL(s).
\end{align*}
If $\Psi$ is the L{\'e}vy symbol of $L$ then the stochastic integral $I(F)$ is infinitely divisible and
has the characteristic function
\begin{align}
\label{eq.charint}
  \phi_{I(F)}\colon V\to\C, \qquad \phi_{I(F)}(v)= \exp\left(\int_0^T \Psi(F^\ast(s)v)\,ds\right).
\end{align}
By firstly considering step functions and then passing to the limit, one can show that for each
$F\in \H^2(U,V)$ the stochastic integral $I(F)$ obeys
\begin{align}
\label{eq.integralswap}
  \scapro{\int_0^T F(s)\,dL(s)}{v}  =\int_0^T F^\ast(s)v\,dL(s)
  \qquad\text{$P$-a.s. for all $v\in V$.}
\end{align}
Here, the right hand side is understood as the same stochastic integral but for
the integrand $F^\ast(\cdot)v\in \H^2(U,\R)$. If $F\in \H^2(U,V)$ is for some $v\in V$ of the special form
\begin{align*}
  F^\ast(t)v=\phi(t)Gv \qquad\text{for all }t\in [0,T],
\end{align*}
for a function $\phi\colon \R\to\R$ and $G\in \L(V,U)$ then one obtains
\begin{align}
\label{eq.realintegrands}
  \int_0^T F^\ast(s)v\, dL(s)=\int_0^T \phi(s)\,d\ell(s),
\end{align}
where $\ell$ denotes the real-valued L{\'e}vy process defined by
$\ell(t):=\scapro{L(t)}{G v}$. If $F\in \H^2(U,V)$ is of the special
form  $F(\cdot) = S(\cdot) G$ for some $G\in \L(U,V)$ and $S\in \H^2(V,V)$, we obtain
\begin{align}\label{eq.integral-imageLevy}
    \int_0^T G^\ast S^\ast(s)v\, dL(s)=\int_0^T S^\ast (s)v\,dK(s),
\end{align}
where $K$ is the L{\'e}vy process in $V$ defined by $K(t):=GL(t)$ for all $t\ge 0$.

For a function $F\in  \H^2(U,V)$ we define the \emph{stochastic convolution integral process} 
$F\ast L:=(F\ast L(t)\colon t\in [0,T])$ by
  \begin{align*}
    F\ast L(t):=\int_0^t F(t-s)\,dL(s)
    \qquad\text{for all }t\in [0,T].
  \end{align*}
In this section we apply our results of Section \ref{se.conv-prob} to the convergence of stochastic convolution integral
processes  in the weak  and product  topology $M_1$, that is for functions $F$, $F_\gamma\in \H^2(U,V)$,
depending on a parameter $\gamma>0$, we establish the convergence
\begin{align*}
  \lim_{\gamma\to\infty} F_\gamma \ast L=F\ast L
\end{align*}
in probability in the weak and product topology.

The study of the limiting behaviour requires that the stochastic processes have
c{\`a}dl{\`a}g paths in $V$, or at least in the appropriate cylindrical sense as pointed out in Remark
\ref{re.path-cylindrical}. There is no condition for regularities of trajectories available covering our 
rather general setting but for numerous specific situations one knows sufficient 
conditions guaranteeing either continuous or c{\`a}dl{\`a}g trajectories of stochastic convolution integrals. 
For example, classical results on continuity of
Gaussian processes can be found in \cite{MarcusShepp} and \cite{Talagrand87}, and on
regularity of infinitely divisible processes in \cite{Talagrand93}; temporal path regularity of stochastic convolution 
integrals are considered in \cite{KwapienMR-06}  and \cite{MarRos05},
the infinite-dimensional Ornstein-Uhlenbeck process is treated in  \cite{MilletSmolenski}
and \cite{PeszatSeidler}. As our work is focused on the convergence rather than
regularities of trajectories we will assume the following in this section: \\

{\bf Assumption A:} For all considered functions $F$,$F_\gamma\in \H^2(U,V)$,
$\gamma>0$, the stochastic processes $F\ast L$ and $F_\gamma\ast L$, $\gamma>0$,
have c{\`a}dl{\`a}g trajectories.

Furthermore, if $W$ denotes the Gaussian part of $L$ then the stochastic process
$F\ast W$ has continuous trajectories. \\

We do not assume that $F_\gamma\ast W$ has continuous paths but only the prospective limit $F\ast W$. This is a
quite natural assumption for the $M_1$ topology that only the  limit is continuous.

\subsection{Convergence of the marginals }

\begin{lemma}\label{le.fddRn}
Let $F$, $F_\gamma$, $\gamma>0$, be functions in $\H^2(U,V)$ satisfying for a subset $D\subseteq V$ and all $u\in U$
\begin{align}
{\rm (i)}
&\, F^\ast (\cdot)v,\,F_\gamma^\ast(\cdot)v\in D([0,T];U) \quad\text{for all }v\in D \text{ and }\gamma>0;\\
{\rm (ii)}\label{eq.th.fddRn-bound}
&\, \sup_{\gamma>0}\norm{\scapro{F_\gamma(\cdot)u}{v}}_\infty<\infty \quad\text{for all }v\in D ;\\
{\rm (iii)} &\,\text{for each $v\in D$ there exists a Lebesgue null set $B\in\Borel([0,T])$ such that}\notag \\
&\, \lim_{\gamma\to\infty} \scapro{\big(F_\gamma(s)-F(s)\big)u}{v}=0 \quad\text{for all }s\in B^c, \,u\in U. \label{eq.th.fddRn-conv}
\end{align}
Then  for each $t\in[0,T]$ and $v\in D$ we  have
\begin{align*}
\lim_{\gamma\to\infty} \scapro{F_\gamma\ast L(t)}{v}
= \scapro{F\ast L(t)}{v}
\end{align*}
in probability.
\end{lemma}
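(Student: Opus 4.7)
The plan is to reduce everything to a one-dimensional computation with characteristic functions, exploiting the formula \eqref{eq.charint} for the distribution of the stochastic integral.

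First, I would use \eqref{eq.integralswap} to rewrite the projections as real-valued stochastic integrals:
\begin{align*}
\scapro{F_\gamma\ast L(t)}{v}-\scapro{F\ast L(t)}{v}
= \int_0^t \big(F_\gamma^\ast(t-s)-F^\ast(t-s)\big)v\,dL(s),
\end{align*}
where for each fixed $v\in D$ the integrand $s\mapsto (F_\gamma^\ast(s)-F^\ast(s))v$ is viewed as an element of $\H^2(U,\R)$ via the pairing with $U$. To prove convergence in probability to $0$, it suffices to show that the characteristic function converges pointwise to $1$. By \eqref{eq.charint}, for each $r\in\R$ this characteristic function equals
\begin{align*}
\exp\left(\int_0^t \Psi\!\left(r\big(F_\gamma^\ast(t-s)-F^\ast(t-s)\big)v\right)ds\right),
\end{align*}
so the task reduces to showing that the integral inside the exponential tends to $0$.

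Next I would establish a uniform bound on $\|F_\gamma^\ast(s)v\|$ in order to enable dominated convergence. Hypothesis (ii) says that for each fixed $u\in U$ and $v\in D$ the scalars $\scapro{F_\gamma(s)u}{v}=\scapro{u}{F_\gamma^\ast(s)v}$ are uniformly bounded in $(\gamma,s)$. By the uniform boundedness principle applied to the family $\{F_\gamma^\ast(s)v : \gamma>0,\,s\in[0,T]\}\subseteq U$, there is a constant $C_v$ with $\sup_{\gamma,s}\|F_\gamma^\ast(s)v\|\le C_v$. Combined with $F^\ast(\cdot)v\in\H^2(U,\R)$ and assumption (i), the integrand $r(F_\gamma^\ast(t-s)-F^\ast(t-s))v$ is uniformly bounded in $U$ (we may enlarge $C_v$ to absorb $F$; if needed one passes through $\H^2$ for an $L^2$-in-$s$ bound on $F$, which together with \eqref{eq.Levysymbolup} still majorises by an integrable function). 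Then the bound \eqref{eq.Levysymbolup} yields an integrable (in fact bounded) majorant for $s\mapsto\Psi(r(F_\gamma^\ast(t-s)-F^\ast(t-s))v)$.

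The pointwise convergence on $B^c$ comes from assumption (iii): for each $s\in B^c$, $\scapro{u}{(F_\gamma^\ast(s)-F^\ast(s))v}=\scapro{(F_\gamma(s)-F(s))u}{v}\to 0$ for all $u\in U$, i.e.\ $F_\gamma^\ast(s)v\to F^\ast(s)v$ weakly in $U$. Since $\Psi$ is sequentially weakly continuous with $\Psi(0)=0$, it follows that $\Psi(r(F_\gamma^\ast(t-s)-F^\ast(t-s))v)\to 0$ for almost every $s\in[0,t]$. Dominated convergence then gives
\begin{align*}
\lim_{\gamma\to\infty}\int_0^t \Psi\!\left(r\big(F_\gamma^\ast(t-s)-F^\ast(t-s)\big)v\right)ds=0,
\end{align*}
so the characteristic functions converge to $1$, yielding convergence in distribution, and hence in probability, to $0$.

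The only delicate point I expect is the uniform $L^\infty$-bound on $\|F_\gamma^\ast(s)v\|$ in $(s,\gamma)$ needed to majorise $\Psi$; everything else is a mechanical combination of \eqref{eq.charint}, \eqref{eq.Levysymbolup} and the weak sequential continuity of the L\'evy symbol. The uniform boundedness principle, applied with $s$ and $\gamma$ jointly indexing the family of functionals on $U$, handles this cleanly once (ii) is read correctly.
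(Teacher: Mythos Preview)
Your proposal is correct and follows essentially the same route as the paper: reduce to showing the characteristic function of the real-valued integral $\int_0^t (F_\gamma^\ast-F^\ast)(t-s)v\,dL(s)$ tends to $1$ via \eqref{eq.charint}, obtain the uniform bound $\sup_{\gamma,s}\|F_\gamma^\ast(s)v\|<\infty$ from hypothesis (ii) by the uniform boundedness principle, majorise $\Psi$ using \eqref{eq.Levysymbolup}, and apply dominated convergence together with the sequential weak continuity of $\Psi$. The only cosmetic difference is that the paper phrases the UBP step through operators $T_\gamma\colon U\to D([0,T];\R)$ rather than directly on the family of vectors in $U$, and it bounds the $F$-term via the $\H^2$ norm rather than via the c\`adl\`ag property from (i); your treatment is equally valid.
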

\begin{proof}
Define for each $t\in [0,T]$ and $\gamma>0$  the random variable
\begin{align*}
 X_\gamma(t):=\int_0^{t}\big(F_\gamma(t-s)-F(t-s)\big)\,
  dL(s).
\end{align*}
By linearity of the stochastic integral and since the Euclidean topology  in $\R^n$ coincides with the product topology it is sufficient to prove that for each $v\in D$ and $t\in [0,T]$ we have
\begin{align*}
\scapro{X_\gamma(t)}{v}\to 0
 \qquad\text{weakly in $\R$ as $\gamma\to\infty$.}
\end{align*}
Let $\Psi$ denote the L{\'e}vy symbol of $L$. Due to equality \eqref{eq.integralswap} we obtain for the  characteristic function of $X_\gamma$ for $\beta\in \R$ that
\begin{align}\label{eq.charXgamma}
E\left[\exp\big(i\beta \scapro{X_\gamma(t)}{v}\big)\right]
& =E\left[ \exp\left( i\beta \int_0^{t}\big(F^\ast_\gamma(t-s)-F^\ast(t-s)\big)v\,dL(s)\right)\right]\notag\\
& = \exp\left( \int_0^{t}\Psi\left(\big(F^\ast_\gamma(t-s)-F^\ast(t-s)\big)(\beta v)\right)\,ds\right)\notag\\
&= \exp\left( \int_0^{t}\Psi\left(\big(F^\ast_\gamma(s)-F^\ast(s)\big)(\beta v)\right)\,ds\right).
\end{align}
Fix $v\in D$ and define for each $\gamma>0$  the linear and continuous mapping
\begin{align*}
  T_\gamma\colon U\to D([0,T],\norm{\cdot}_\infty), \qquad T_\gamma u:=\scapro{u}{F_\gamma^\ast(\cdot)v}.
\end{align*}
Condition \eqref{eq.th.fddRn-bound} guarantees for each $u\in U$ that
 \begin{align*}
  \sup_{\gamma>0}\norm{T_\gamma u}_\infty=
   \sup_{\gamma>0}\norm{\scapro{u}{F_\gamma^\ast(\cdot)v}}_\infty<\infty.
 \end{align*}
Thus, the  uniform boundedness principle implies
\begin{align*}
M:=\sup_{\gamma>0}\sup_{s\in [0,T]}\norm{F^\ast_\gamma(s)v}_U =\sup_{\gamma>0}\sup_{s\in [0,T]}\sup_{\norm{u}\le 1}\abs{\scapro{u}{F_\gamma^\ast(s)v}}= \sup_{\gamma>0}\norm{T_\gamma}_{U\to D}<\infty.
\end{align*}
The estimate \eqref{eq.Levysymbolup} for the L{\'e}vy symbol $\Psi$ implies
that there exists a constant $c>0$ such that for each $\gamma>0$ and $s\in [0,T]$
\begin{align*}
\abs{\Psi\big( (F_\gamma^\ast(s)-F^\ast(s))(\beta v)\big)} &\le c\left(1 + \norm{ \big(F_\gamma^\ast(s)-F^\ast(s)\big)(\beta v)}^2\right)\\
&\le c\left(1 +2 \abs{\beta}^2\left(\norm{F_\gamma^\ast(s)v}^2 + \norm{F^\ast(s)v}^2\right) \right)\\
&\le c\left(1 +2 \abs{\beta}^2\left( M^2 + \norm{v}^2 \norm{F(s)}_{U\to V}^2\right)\right).
\end{align*}
Since Condition \eqref{eq.th.fddRn-conv} implies by the sequentially weak continuity of the L{\'e}vy symbol $\Psi\colon U^\ast\to \C$ that
\begin{align*}
\lim_{\gamma\to\infty}  \Psi\left( (F_\gamma^\ast(s)- F^\ast(s))(\beta v)\right)=0 \qquad\text{for Lebesgue almost  all }s\in [0,T],
\end{align*}
 Lebesgue's theorem of dominated convergence enables us to conclude
\begin{align*}
\lim_{\gamma\to\infty} \int_0^t\Psi\left((F_\gamma^\ast(s)-
F^\ast(s))(\beta v)\right)\,ds
=0,
\end{align*}
which completes the proof by \eqref{eq.charXgamma}.
\end{proof}

\subsection{The reproducing kernel Hilbert space}

In this subsection we fix a L{\'e}vy process $L$ in $U$ with characteristics $(a,Q,\nu)$ and let $\alpha$ be a positive constant. The L{\'e}vy process $L$ can be decomposed into
\begin{align}\label{eq.LevyIto}
   L(t)=W(t)+X_\alpha(t)+Y_\alpha(t)\qquad\text{for all $t\ge 0$,}
\end{align}
 where $W$ is a  Wiener process with covariance operator $Q\colon U\to U$, and $X_\alpha$ and $Y_\alpha$  are
 $U$-valued L{\'e}vy processes with characteristic functions
\begin{align*}
  \phi_{X_\alpha(t)}(u)&=\exp\left(-t\int_{\norm{r}\le \alpha} \left(e^{i\scapro{r}{u}}
  - 1 - i\scapro{r}{u}\right)\,\nu(dr)\right),\\
  \phi_{Y_\alpha(t)}(u)&=\exp\left(it\scapro{b_\alpha}{u}-t\int_{\alpha<\norm{r}} \left(e^{i\scapro{r}{u}}
  - 1 \right)\,\nu(dr)\right),
\end{align*}
for $u\in U$ and $t\ge 0$. The element $b_\alpha\in U$ is determined by the characteristics of $L$  and by the constant $\alpha$. Since
$X_\alpha(1)$ has finite moments we can define the covariance operator $R_\alpha\colon U\to U$ of $X_\alpha(1)$  by
\begin{align*}
  \scapro{R_\alpha u_1}{u_2}=E\big[ \scapro{X_\alpha(1)}{u_1}\scapro{X_\alpha(1)}{u_2}\big]
           \qquad\text{for all }u_1,u_2\in U.
\end{align*}
Since $R_\alpha$ is positive and symmetric there exists a separable Hilbert space $H_\alpha$ with norm
$\norm{\cdot}_{H_\alpha}$ and an embedding $i_\alpha\colon H\to U$ satisfying $R_\alpha=i_\alpha i_\alpha^\ast$.
In particular, we have
\begin{align}\label{eq.lim-i-alpha}
 \lim_{\alpha\to 0} \norm{i_\alpha}_{H_\alpha\to U}=0,
\end{align}
which follows from the estimate
\begin{align*}
\norm{i_\alpha^\ast u}^2_{H_\alpha}= \scapro{R_\alpha u}{u}=\int_{\norm{r}\le \alpha} \scapro{u}{r}^2\, \nu(dr)
           \le \norm{u}^2 \int_{\norm{r}\le \alpha} \norm{r}^2\, \nu(dr)
\end{align*}
for each $u\in U$.  One obtains for $\alpha \le \beta$
\begin{align*}
  \scapro{R_\alpha u}{u}
  = \int_{\norm{r}\le \alpha} \scapro{u}{r}^2\, \nu(dr)
   \le    \int_{\norm{r}\le \beta} \scapro{u}{r}^2\, \nu(dr)
  =  \scapro{R_\beta u}{u} \qquad\text{for all }u\in U.
\end{align*}
One can deduce  from Riesz representation theorem, see \cite[Proposition 1.1]{vanNeerven98} or
\cite[Section 1.1]{vanNeervenWeis}, that $H_\alpha\subseteq H_\beta$ and the embedding $H_\alpha \to H_\beta$ is contractive.

Since the range of $i_\alpha^\ast$ is dense in $H_\alpha$ and $H_\alpha$ is separable there exits a 
basis $(h_k^{\alpha})_{k\in \N}\subseteq i_\alpha^\ast(U)$. We choose $u_k^{\alpha}\in U$ such 
that $i^\ast_\alpha u_k^{\alpha}=h_k^{\alpha}$ 
for all $k\in \N$ and we define real-valued L{\'e}vy processes $\ell_k^{\alpha}$ by 
$\ell_k^{\alpha}(t):=\scapro{X_\alpha(t)}{u_k^{\alpha}}$ for all $t\in [0,T]$. The L{\'e}vy process $X_\alpha$ can be represented by
\begin{align}\label{eq.Levyassum}
  X_\alpha(t)=\sum_{k=1}^\infty i_\alpha h_k^{\alpha} \ell_k^{\alpha}(t)
  \qquad\text{for all }t\in [0,T],
\end{align}
where the sum converges weakly in  $L^2_P(\Omega;U)$,  i.e.
\begin{align*}
\scapro{X_\alpha(t)}{u}=\sum_{k=1}^\infty \scapro{i_\alpha h_k^{\alpha}}{u} \ell_k(t)
  \qquad\text{in $L^2_P(\Omega;\R)$ for all }t\in [0,T]\text{ and } u\in U.
\end{align*}
The representation \eqref{eq.Levyassum} is called {\em Karhunen--Lo{\`e}ve expansion}, and it can be
derived in the same way as it is done in Riedle \cite{Riedle10} for Wiener processes.
Note, that it follows easily from their definition that the L{\'e}vy processes $(\ell_k)_{k\in\N}$ are uncorrelated.

\subsection{Estimating the small jumps}

We begin with a generalisation to the Hilbert space setting of a result of Marcus and Rosi\'nski in
\cite{MarRos03} on a maximal inequality of convolution integrals. We apply here the decomposition  \eqref{eq.LevyIto} of the L{\'e}vy process $L$ for some $\alpha>0$.
\begin{lemma}\label{le.MarcusRosHilbert}
A function $f\in \H^2(U,\R)$ satisfies for each $\alpha>0$ the estimate
\begin{align*}
  E\left[\sup_{t\in [0,T]}\abs{\int_0^t f(t-s)\, dX_\alpha(s)}\right]
  \le  \varkappa \sqrt{2T}\sum_{k=1}^\infty \norm{\scapro{f(\cdot)}{i_\alpha h_k^\alpha}}_{TV_2} ,
\end{align*}
where $\varkappa:=32\sqrt{2} \int_0^1 \sqrt{\ln(1/s)}\,ds$.
\end{lemma}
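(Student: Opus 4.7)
The strategy is to reduce the Hilbert-space maximal inequality to the one-dimensional case by exploiting the Karhunen--Lo\`eve expansion \eqref{eq.Levyassum} of $X_\alpha$ and then invoking the Marcus--Rosi\'nski maximal inequality for real-valued convolution integrals from \cite{MarRos03}. The basis $(h_k^\alpha)_{k\in\N}$ in $H_\alpha$ is chosen orthonormal, so since $R_\alpha=i_\alpha i_\alpha^\ast$ the scalar L\'evy processes $\ell_k^\alpha$ are uncorrelated with $E[\ell_k^\alpha(1)^2]=\norm{h_k^\alpha}_{H_\alpha}^2=1$, and they are mean zero because $X_\alpha$ is the compensated small-jump part of $L$.

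First I would truncate the series and set $X_\alpha^N(t):=\sum_{k=1}^N i_\alpha h_k^{\alpha} \ell_k^{\alpha}(t)$, which is a finite sum of scalar L\'evy processes times fixed vectors in $U$. Combining linearity of the integral with the scalar identity \eqref{eq.realintegrands} applied to each summand, I obtain
\begin{align*}
  \int_0^t f(t-s)\,dX_\alpha^N(s)
  =\sum_{k=1}^N \int_0^t \scapro{f(t-s)}{i_\alpha h_k^{\alpha}}\,d\ell_k^{\alpha}(s),
\end{align*}
where $f(\cdot)\in U^\ast$ is identified with an element of $U$ via the Riesz isomorphism. Taking the supremum over $t\in[0,T]$, then the expectation, and using subadditivity gives
\begin{align*}
  E\left[\sup_{t\in[0,T]}\abs{\int_0^t f(t-s)\,dX_\alpha^N(s)}\right]
  \le \sum_{k=1}^N E\left[\sup_{t\in[0,T]}\abs{\int_0^t \scapro{f(t-s)}{i_\alpha h_k^{\alpha}}\,d\ell_k^{\alpha}(s)}\right].
\end{align*}

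Next I would apply the one-dimensional Marcus--Rosi\'nski bound to each summand. Since $\ell_k^\alpha$ is a real, mean-zero L\'evy process with $E[\ell_k^\alpha(T)^2]=T$, their result yields, with the explicit constant $\varkappa=32\sqrt{2}\int_0^1\sqrt{\ln(1/s)}\,ds$, the estimate
\begin{align*}
  E\left[\sup_{t\in[0,T]}\abs{\int_0^t g(t-s)\,d\ell_k^{\alpha}(s)}\right]
  \le \varkappa\sqrt{2T}\,\norm{g}_{TV_2}
\end{align*}
for every $g\in \H^2(\R,\R)$ of finite 2-variation. Applied with $g=\scapro{f(\cdot)}{i_\alpha h_k^\alpha}$, this gives the desired bound for the truncated process $X_\alpha^N$ for every $N\in\N$.

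Finally I would pass to the limit $N\to\infty$. The weak $L^2_P(\Omega;U)$-convergence $X_\alpha^N(t)\to X_\alpha(t)$ combined with orthogonality of the scalar increments $d\ell_k^\alpha$ in $L^2$ upgrades to strong $L^2$-convergence of the truncated integrals $\int_0^\cdot f(\cdot-s)\,dX_\alpha^N(s)$ to $\int_0^\cdot f(\cdot-s)\,dX_\alpha(s)$ at each fixed time. Extracting a subsequence along which the convergence is almost sure at a dense countable set and using the c\`adl\`ag property ensured by Assumption A, I can invoke Fatou's lemma for the supremum in $t$ to conclude
\begin{align*}
  E\left[\sup_{t\in[0,T]}\abs{\int_0^t f(t-s)\,dX_\alpha(s)}\right]
  \le \varkappa\sqrt{2T}\sum_{k=1}^\infty \norm{\scapro{f(\cdot)}{i_\alpha h_k^\alpha}}_{TV_2};
\end{align*}
if the right-hand series diverges the inequality is vacuous. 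The main technical obstacle is precisely this last step, i.e.\ justifying the interchange of the limit in $N$ with the supremum in $t$; the uniform Marcus--Rosi\'nski bound on the partial sums provides exactly the tightness needed to make this passage rigorous.
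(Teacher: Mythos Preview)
Your overall plan---expand $X_\alpha$ via Karhunen--Lo\`eve, reduce to one-dimensional convolution integrals, and sum the Marcus--Rosi\'nski bounds---matches the paper's argument. However, there is a genuine gap in the middle step: you invoke Theorem~1.1 of \cite{MarRos03} directly for the processes $\ell_k^\alpha$, but that result is stated for \emph{symmetric} L\'evy processes, and $\ell_k^\alpha=\scapro{X_\alpha(\cdot)}{u_k^\alpha}$ is merely mean-zero, not symmetric in general. The paper inserts an explicit symmetrisation step before citing \cite{MarRos03}: it introduces an independent copy $\ell_k'$, sets $\hat\ell_k:=\ell_k-\ell_k'$, and uses $E[\ell_k(t)]=0$ together with conditional Jensen to show
\[
E\Big[\sup_{t}\Big|\int_0^t \scapro{f(t-s)}{i_\alpha h_k^\alpha}\,d\ell_k(s)\Big|\Big]
\le
E\Big[\sup_{t}\Big|\int_0^t \scapro{f(t-s)}{i_\alpha h_k^\alpha}\,d\hat\ell_k(s)\Big|\Big].
\]
Only then is Theorem~1.1 of \cite{MarRos03} applied to the symmetric process $\hat\ell_k$. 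This is also where the factor $\sqrt{2}$ in $\sqrt{2T}$ originates: the L\'evy measure of $\hat\ell_k$ is twice that of $\ell_k$, and the paper's subsequent second-moment computation picks up that factor. In your write-up the constant $\sqrt{2T}$ appears without a mechanism producing the $2$, which is a symptom of the missing step.

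A second, more minor point: the paper does not truncate to $X_\alpha^N$ and pass to the limit via Fatou; it expands directly and bounds the supremum of the full series by the series of suprema. Your truncation-plus-Fatou route is a legitimate alternative, but the ``main technical obstacle'' you flag is not really where the difficulty lies---the crux is the symmetrisation you omitted.
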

\begin{proof}
Since $\alpha>0$ is fixed we neglect its notation  in the following. According to \eqref{eq.Levyassum} the L{\'e}vy process $X$ can be represented by
\begin{align*}
  X(t)=\sum_{k=1}^\infty ih_k \ell_k(t)
        \qquad\text{for all }t\in [0,T],
\end{align*}
where the sum converges weakly in $L^2_P(\Omega;U)$. Since the real-valued L{\'e}vy processes $(\ell_k)_{k\in\N}$ are uncorrelated we obtain
\begin{align*}
 E\left[\sup_{t\in [0,T]}\abs{\int_0^t f(t-s)\, dX(s)}\right]
&= E\left[\sup_{t\in [0,T]}\abs{\sum_{k=1}^\infty \int_0^t \scapro{f(t-s)}{ih_k}\, d\ell_k(s)}\right]\\
&\le \sum_{k=1}^\infty  E\left[\sup_{t\in [0,T]}\abs{\int_0^t \scapro{f(t-s)}{ih_k}\, d\ell_k(s)}\right].
\end{align*}
In order to estimate the expectation of the real-valued stochastic integrals on the right 
hand side we follow some arguments by Marcus and Rosi\'nski in 
\cite{MarRos05}. Let $\ell_k^\prime$ be an independent copy of $\ell_k$ and define the 
symmetrisation $\hat{\ell}_k:=\ell_k-\ell^\prime_k$ for each $k\in\N$. Since $E[\ell_k(t)]=0$ for all $t\ge 0$ and  $k\in\N$ one obtains
\begin{align*}
&E\left[\sup_{t\in [0,T]} \abs{\int_0^t \scapro{f(t-s)}{ih_k} \, d\ell_k(s)}\right]\\
&\qquad = E\left[\sup_{t\in [0,T]} \abs{E\left[ \int_0^t \scapro{f(t-s)}{ih_k} \, d\ell_k(s)
       -\int_0^t \scapro{f(t-s)}{ih_k} \, d\ell^\prime(s)\middle\vert \ell_k(T)\right]    }   \right]\\
&\qquad \le E\left[\sup_{t\in [0,T]}
       E\left[\abs{ \int_0^t \scapro{f(t-s)}{ih_k} \, d\hat{\ell}_k(s) }\middle\vert\ell_k(T)\right]\right]\\
&\qquad \le E\left[ E\left[\sup_{t\in [0,T]}
      \abs{ \int_0^t \scapro{f(t-s)}{ih_k} \, d\hat{\ell}_k(s) }\middle\vert \ell_k(T)\right]\right]\\
&\qquad =  E\left[\sup_{t\in [0,T]}\abs{ \int_0^t \scapro{f(t-s)}{ih_k} \, d\hat{\ell}_k(s) }\right].
\end{align*}
Theorem 1.1 in \cite{MarRos03} guarantees for all $k\in \N$ that
\begin{align*}
  E\left[\sup_{t\in [0,T]}\abs{\int_0^t \scapro{f(t-s)}{ih_k}\, d\hat{\ell}_k(s)}\right]
\le \varkappa E\left[\abs{\int_0^T \norm{\scapro{f(\cdot)}{ih_k}\1_{[0,T-s]}}_{TV_2}\, d\hat{\ell}_k(s)}\right].
\end{align*}
Since the L{\'e}vy measure $\lambda_k$ of $\ell_k$ is given by $\lambda_k:=\nu_\alpha \circ \scapro{\cdot}{u_k}^{-1}$
where $\nu_\alpha$ denotes the L{\'e}vy measure of $X$ we conclude
\begin{equation}
\label{eq.estmatingTV}
\begin{aligned}
&\left(E \abs{\int_0^T \norm{\scapro{f(\cdot)}{ih_k}\1_{[0,T-s]}}_{TV_2}\,
     d\hat{\ell}_k(s)}\right)^2\\
&\qquad\qquad\le E \abs{\int_0^T \norm{\scapro{f(\cdot)}{ih_k}\1_{[0,T-s]}}_{TV_2}\,
     d\hat{\ell}_k(s)}^2\\
&\qquad\qquad=2 \int_0^T \int_{\R} \norm{\scapro{f(\cdot)}{ih_k}\1_{[0,T-s]}}_{TV_2}^2 \beta^2
                \,\lambda_k(d\beta) \,ds\\
&\qquad\qquad =2\int_0^T \norm{\scapro{f(\cdot)}{ih_k}\1_{[0,T-s]}}_{TV_2}^2\,ds
                     \int_{U} \scapro{u}{u_k}^2\,\nu_\alpha(du)\\
&\qquad\qquad =2\int_0^T \norm{\scapro{f(\cdot)}{ih_k}\1_{[0,T-s]}}_{TV_2}^2\,ds \,\scapro{Ru_k}{u_k} \\
&\qquad\qquad =2\int_0^T \norm{\scapro{f(\cdot)}{ih_k}\1_{[0,T-s]}}_{TV_2}^2\,ds \\
&\qquad\qquad \le 2T\norm{\scapro{f(\cdot)}{ih_k}}_{TV_2}^2,
\end{aligned}
\end{equation}
which completes the proof.
\end{proof}

The bound on the right hand side in Lemma \ref{le.MarcusRosHilbert} depends on the regularity of the function $f$
and of the covariance structure of the underlying L{\'e}vy process. It is a natural generalisation to the
infinite dimensional setting of the result in \cite{MarRos03}.

We will later consider the special case, that the integrands of the stochastic convolution 
integrals can be diagonalised  with respect to an orthonormal basis. In this case, we can improve the analogue estimate of Lemma \ref{le.MarcusRosHilbert}.
\begin{lemma}\label{le.MarcusRosHilbert-realvalued}
Let $V$ be a Hilbert space with an orthonormal basis $(e_k)_{k\in\N}$ and let $F\in\H^2(U,V)$ be a function  of the form
\begin{align*}
  F^\ast (\cdot)e_k =\phi_k(\cdot)\, G e _k \qquad \text{for all }k\in\N,
\end{align*}
for  c{\`a}dl{\`a}g functions $\phi_k\colon [0,T]\to \R$ and $G\in \L(V,U)$. Then it follows for each $\alpha>0$ and $v\in V$ that
\begin{align*}
&  E\left[\sup_{t\in [0,T]}\abs{\int_0^t F^\ast(t-s)v\, dX_\alpha(s)}\right]\\
&\qquad\qquad  \le    \varkappa \sqrt{2T}\sum_{k=1}^\infty
        \abs{\scapro{v}{e_k}}\norm{\phi_k(\cdot)}_{TV_2}\left(\int_{\norm{u}\le \alpha} \abs{\scapro{u}{Ge_k}}^2\,\nu(du)\right)^{1/2},
\end{align*}
where $\varkappa:=32\sqrt{2} \int_0^1 \sqrt{\ln(1/s)}\,ds$.
\end{lemma}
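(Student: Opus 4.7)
The plan is to reduce to the one-dimensional case by expanding $v$ in the orthonormal basis $(e_k)_{k\in\N}$, and then to apply the same real-valued Marcus--Rosi\'nski machinery used in the proof of Lemma~\ref{le.MarcusRosHilbert} coordinate by coordinate, but this time tracking the basis-dependent structure to obtain a sharper bound.

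First, I would write $v=\sum_k\scapro{v}{e_k}e_k$ and exploit the diagonal form $F^\ast(t)e_k=\phi_k(t)Ge_k$. By \eqref{eq.realintegrands} applied to each summand, and linearity of the integral, one obtains
\begin{align*}
\int_0^t F^\ast(t-s)v\,dX_\alpha(s)=\sum_{k=1}^\infty \scapro{v}{e_k}\int_0^t \phi_k(t-s)\,d\ell_k(s),
\end{align*}
where $\ell_k(s):=\scapro{X_\alpha(s)}{Ge_k}$ is a real-valued L\'evy process with L\'evy measure $\lambda_k=\nu_\alpha\circ\scapro{\cdot}{Ge_k}^{-1}$, and the partial sums converge in probability in $V$ uniformly in $t$ (by continuity of the integral in its integrand in $\H^2(U,V)$, applied to the truncation $v_n=\sum_{k\le n}\scapro{v}{e_k}e_k$). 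Taking suprema and expectations and using the triangle inequality in $\ell^1$ over $k$ yields
\begin{align*}
E\left[\sup_{t\in[0,T]}\abs{\int_0^t F^\ast(t-s)v\,dX_\alpha(s)}\right]
\le \sum_{k=1}^\infty \abs{\scapro{v}{e_k}}\, E\left[\sup_{t\in[0,T]}\abs{\int_0^t \phi_k(t-s)\,d\ell_k(s)}\right].
\end{align*}

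Next, for each fixed $k$, I would follow verbatim the symmetrisation--decoupling argument used in the proof of Lemma~\ref{le.MarcusRosHilbert}: introduce an independent copy $\ell_k^\prime$, condition on $\ell_k(T)$, and use $E[\ell_k(t)]=0$ to replace $\ell_k$ by $\hat{\ell}_k:=\ell_k-\ell_k^\prime$. Theorem~1.1 of \cite{MarRos03} then gives
\begin{align*}
E\left[\sup_{t\in[0,T]}\abs{\int_0^t \phi_k(t-s)\,d\hat{\ell}_k(s)}\right]
\le \varkappa\, E\left[\,\abs{\int_0^T \norm{\phi_k(\cdot)\1_{[0,T-s]}}_{TV_2}\,d\hat{\ell}_k(s)}\right].
\end{align*}
Jensen's inequality combined with the It\^o-type isometry for the symmetrised compensated L\'evy process (exactly as in \eqref{eq.estmatingTV}) then bounds the right-hand side squared by
\begin{align*}
2\int_0^T \norm{\phi_k(\cdot)\1_{[0,T-s]}}_{TV_2}^{2}\,ds \cdot \int_\R \beta^2\,\lambda_k(d\beta)
\le 2T\norm{\phi_k}_{TV_2}^{2}\int_{\norm{u}\le\alpha}\scapro{u}{Ge_k}^{2}\,\nu(du),
\end{align*}
where I used $\norm{\phi_k(\cdot)\1_{[0,T-s]}}_{TV_2}\le\norm{\phi_k}_{TV_2}$ and the identity $\int_\R\beta^2\,\lambda_k(d\beta)=\int_{\norm{u}\le\alpha}\scapro{u}{Ge_k}^2\,\nu(du)$ coming from the pushforward relation $\lambda_k=\nu_\alpha\circ\scapro{\cdot}{Ge_k}^{-1}$.

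Taking square roots, plugging back into the $\ell^1$ sum over $k$, and gathering the constants then produces the asserted inequality. The main obstacle I anticipate is purely bookkeeping: justifying the interchange of the infinite sum with the stochastic integral and the supremum, which is handled by truncating $v$ to finite basis sums, passing to the limit in $L^2_P(\Omega;U)$, and invoking Fatou's lemma to move the limit past the $\sup_{t}$ and the expectation. Everything else is a coordinate-wise application of the technique already established in Lemma~\ref{le.MarcusRosHilbert}.
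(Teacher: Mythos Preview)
Your proposal is correct and follows essentially the same route as the paper: expand $v$ in the basis, use the diagonal structure to reduce to one-dimensional convolution integrals driven by $\ell_k(t)=\scapro{X_\alpha(t)}{Ge_k}$, then apply the symmetrisation plus Marcus--Rosi\'nski bound coordinate-wise exactly as in Lemma~\ref{le.MarcusRosHilbert}. If anything, you are slightly more careful than the paper in flagging the interchange of the infinite sum with the supremum and expectation (which the paper handles without comment); your truncation-plus-Fatou argument is the right way to justify it.
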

\begin{proof}
For each $k\in\N$ define the real-valued L{\'e}vy process $x^{\alpha}_k$ by defining $x^{\alpha}_k(t)=\scapro{X_\alpha(t)}{Ge_k}$ for all $t\ge 0$. It follows from \eqref{eq.realintegrands} for each $v\in V$ that
\begin{align*}
 E\left[\sup_{t\in [0,T]}\abs{\int_0^t F^\ast(t-s)v \, dX_\alpha(s)}\right]
&= E\left[\sup_{t\in [0,T]}\abs{\sum_{k=1}^\infty\scapro{v}{e_k}
       \int_0^t \phi_k(t-s) Ge_k \, dX_\alpha(s)}\right]\\
&= E\left[\sup_{t\in [0,T]}\abs{\sum_{k=1}^\infty \scapro{v}{e_k}
       \int_0^t \phi_k(t-s)  \, dx^\alpha_k(s)}\right]\\
&\le \sum_{k=1}^\infty \abs{\scapro{v}{e_k}} E\left[\sup_{t\in [0,T]}
       \abs{\int_0^t \phi_k(t-s) \, dx^\alpha_k(s)}\right].
\end{align*}
As in the proof of Lemma \ref{le.MarcusRosHilbert} we obtain
\begin{align*}
  E\left[\sup_{t\in [0,T]}\abs{\int_0^t \phi_k(t-s)\,dx^\alpha_k(s)}\right]
\le \varkappa E\left[\abs{\int_0^T \norm{\phi_k(\cdot)\1_{[0,T-s]}}_{TV_2}\,
              d\hat{x}^\alpha_k(s)}\right],
\end{align*}
where $\hat{x}_k^\alpha:=x_k^\alpha-x^{\alpha\prime}_k$ denotes the symmetrisation  of $x_k^\alpha$ for 
each $k\in\N$ by an independent copy $x^{\alpha\prime}_k$ of $x^\alpha_k$. 
Since the L{\'e}vy measure $\lambda_k^\alpha$ of $x^\alpha_k$ is given by
$\lambda^\alpha_k=\nu_\alpha\circ \scapro{\cdot}{Ge_k}^{-1}$ where $\nu_\alpha$ denotes the L{\'e}vy measure of 
$X_\alpha$  we conclude by a similar calculation as in \eqref{eq.estmatingTV} that
\begin{align*}
\left(E\left[\abs{\int_0^T \norm{\phi_k(\cdot)\1_{[0,T-s]}}_{TV_2}\,
d\hat{x}^\alpha_k(s)}\right]\right)^2
\le 2T\norm{\phi_k(\cdot)}_{TV_2}^2 \int_{\norm{u}\le \alpha}
\scapro{u}{Ge_k}^2\,\nu(du).
\end{align*}
Summarising the estimates above completes the proof.
\end{proof}

\subsection{The general case}

In this section we consider the convergence in the weak topology and the product topology. For the latter
we assume that $e=(e_k)_{k\in\N}$ is an orthonormal basis of $V$.
\begin{theorem}\label{th.Dconv}
Let $F, F_\gamma\in\H^2(U,V)$, $\gamma>0$,  be functions
satisfying for a subset $D\subseteq V$
\begin{align}
 {\rm (i)}&\, F^\ast(\cdot)v,\, F_\gamma^\ast (\cdot)v\in D([0,T];U) \quad\text{for all }v\in D
  \text{ and } \gamma>0;\label{eq.proMconv-cadlag}\\
  {\rm (ii)}&\, \sup_{\gamma>0}\norm{\scapro{F_\gamma(\cdot)u}{v}}_\infty<\infty
  \quad\text{for all }u\in U,\, v\in D ; \label{eq.proMconv-bound}\\
    {\rm (iii)}&\, \limsup_{\alpha\to 0}\sup_{\gamma> 0}\sum_{k=1}^\infty \norm{\scapro{F_\gamma^\ast(\cdot)v}{i_\alpha h_k^\alpha}}_{TV_2}=0 \quad\text{for all }v\in D;\label{eq.proMconv-TV}\\
  {\rm (iv)}&\, \lim_{\gamma\to\infty}d_M\big(\scapro{F_\gamma(\cdot)u}{v},\,\scapro{F(\cdot)u}{v}\big)=0
   \quad\text{for all }u\in U,\, v\in D . \label{eq.proMconv-conv}
\end{align}
\begin{enumerate}
  \item[{\rm (1)}]
If $\{e_1,e_2,\dots\}\subseteq D$ then it follows
\begin{align*}
\lim_{\gamma\to\infty} \big(F_\gamma \ast L(t)\colon t\in [0,T]\big)
= \big(F \ast L(t)\colon t\in [0,T]\big)
\end{align*}
in probability in the product topology $\big(D([0,T];V), d_M^{{\,e}} \big)$.
\item[{\rm (2)}]
If $V=D$ then it follows
\begin{align*}
\lim_{\gamma\to\infty} \big(F_\gamma \ast L(t)\colon t\in [0,T]\big)
= \big(F \ast L(t)\colon t\in [0,T]\big)
\end{align*}
weakly in probability in $D([0,T];V)$.
\end{enumerate}
\end{theorem}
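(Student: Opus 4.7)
In parts~(1) and~(2) the strategy is to apply the appropriate corollary: Corollary~\ref{co.productcon=M+ffd} in~(1), and Corollary~\ref{co.weakcon=M+ffd} in~(2). Each of them reduces the claim to verifying, for every relevant test vector $v$ (namely $v=e_k$ in~(1) and an arbitrary $v\in V$ in~(2)), two things on the real-valued process $Z_v^\gamma:=\scapro{F_\gamma\ast L}{v}$: the marginal convergence $Z_v^\gamma(t)\to\scapro{F\ast L(t)}{v}$ in probability for every $t\in[0,T]$, and the oscillation estimate
\begin{align*}
  \lim_{\delta\to 0}\limsup_{\gamma\to\infty} P\bigl(M(Z_v^\gamma,\delta)\ge \epsilon\bigr)=0
  \qquad\text{for every }\epsilon>0.
\end{align*}

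The marginal convergence is supplied by Lemma~\ref{le.fddRn}: its hypotheses~(i) and~(ii) coincide with the corresponding assumptions of the theorem, while its hypothesis~(iii) follows from our~(iv), since $d_M$-convergence in $D([0,T];\R)$ forces pointwise convergence at every continuity point of the c\`adl\`ag limit $\scapro{F(\cdot)u}{v}$ (cf.~\cite[Theorem~12.5.1]{whitt02}), and the set of discontinuities is countable and hence Lebesgue null.

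For the oscillation estimate I would split $L$ via its L\'evy--It\^o decomposition at a threshold $\alpha>0$, setting $L_\alpha:=W+Y_\alpha$, so that $Z_v^\gamma=\scapro{F_\gamma\ast L_\alpha}{v}+\scapro{F_\gamma\ast X_\alpha}{v}$. Inequality~\eqref{eq.f+ginfty} then yields the pathwise bound
\begin{align*}
  M(Z_v^\gamma,\delta)\le M(\scapro{F_\gamma\ast L_\alpha}{v},\delta) + 2\norm{\scapro{F_\gamma\ast X_\alpha}{v}}_\infty.
\end{align*}
By~\eqref{eq.integralswap} and Lemma~\ref{le.MarcusRosHilbert} applied to $f:=F_\gamma^\ast(\cdot)v$,
\begin{align*}
  E\left[\norm{\scapro{F_\gamma\ast X_\alpha}{v}}_\infty\right]
  \le \varkappa\sqrt{2T}\sum_{k=1}^\infty \norm{\scapro{F_\gamma^\ast(\cdot)v}{i_\alpha h_k^\alpha}}_{TV_2},
\end{align*}
and hypothesis~(iii) renders the right-hand side arbitrarily small for sufficiently small $\alpha$ uniformly in $\gamma$; Markov's inequality then makes the second summand in the pathwise bound negligible on events of arbitrarily large probability.

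The main obstacle is controlling $M(\scapro{F_\gamma\ast L_\alpha}{v},\delta)$ for $\alpha$ fixed and small. My plan is to establish the $d_M$-convergence in probability $\scapro{F_\gamma\ast L_\alpha}{v}\to\scapro{F\ast L_\alpha}{v}$ and then invoke the implication~(a)$\Rightarrow$(b)(ii) of Theorem~\ref{th.stochconv=fdd+M} to extract the oscillation bound for the left-hand side. Assumption~A plays a decisive role here: since $F\ast W$ has continuous trajectories and $Y_\alpha$ has almost surely only finitely many jumps on $[0,T]$, the limit $F\ast L_\alpha$ has only finitely many jumps, all produced by $Y_\alpha$. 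Conditioning on the jump times $(\tau_i)$ and sizes $(\Delta_i)$ of $Y_\alpha$ reduces $F_\gamma\ast Y_\alpha$ to a continuous drift integral plus the finite sum $\sum_{\tau_i\le t}F_\gamma(t-\tau_i)\Delta_i$; each projected summand $\scapro{F_\gamma(\cdot-\tau_i)\Delta_i}{v}$ converges in $d_M$ to $\scapro{F(\cdot-\tau_i)\Delta_i}{v}$ by hypothesis~(iv), and the almost sure disjointness of jump locations combines these finitely many convergences into $d_M$-convergence of the sum. The Wiener contribution $F_\gamma\ast W$ is controlled through its Gaussian structure together with the continuity of $F\ast W$ provided by Assumption~A, and the pieces are assembled via a probabilistic refinement of the stability Lemma~\ref{le.sup+M}, with the above small-jumps bound providing the uniform sup-norm approximation tail.
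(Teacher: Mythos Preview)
Your proposal uses the same ingredients as the paper: Lemma~\ref{le.fddRn} for the marginals, Lemma~\ref{le.MarcusRosHilbert} and Markov's inequality for the small-jump piece, and the a.s.\ disjointness of the discontinuity sets of the shifted kernels together with \cite[Corollary 12.7.1]{whitt02} to combine the finitely many large-jump summands. The organization differs slightly: the paper does \emph{not} first establish $d_M$-convergence in probability of $\scapro{F_\gamma\ast L_\alpha}{v}$ and then extract the oscillation via (a)$\Rightarrow$(b)(ii). Instead it bounds $M(I_\gamma;\delta)$ directly, treating the three pieces separately and combining through \eqref{eq.f+ginfty}--\eqref{eq.f+gcont}: the small-jump part via its sup-norm (as you do), the large-jump sum $R_\gamma$ by reading off $\limsup_\gamma M(R_\gamma;\delta)\to 0$ from the a.s.\ $d_M$-convergence $R_\gamma\to R$, the drift $S_\gamma$ via its uniform Lipschitz bound from \eqref{eq.proMconv-bound}, and the Wiener part $C_\gamma$ by a modulus-of-continuity estimate. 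Your detour is harmless but redundant, and it forces you to worry about whether the $L_\alpha$-driven convolutions are themselves c\`adl\`ag, which Assumption~A does not guarantee for the individual pieces.

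The one step you leave genuinely underspecified is the Gaussian piece. The paper's argument is concrete: write $C_\gamma=\scapro{F\ast W}{v}+G_\gamma$, observe that $G_\gamma$ restricted to $[0,T]\cap\Q$ is a bounded centred Gaussian process whose sup-variance $\sigma_\gamma^2\to 0$ (by weak sequential continuity of $i_Q^\ast$ together with \eqref{eq.proMconv-bound} and dominated convergence), and apply Borell's concentration inequality to get $\sup_t|G_\gamma(t)-E[M_\gamma]|\to 0$ in probability. Combined with the continuity of $\scapro{F\ast W}{v}$ from Assumption~A, this yields the increment bound \eqref{eq.pWestimate} for $C_\gamma$, which enters the final estimate through \eqref{eq.f+gcont}. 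Note that this gives a modulus-of-continuity control rather than sup-norm convergence $C_\gamma\to C$, which is another reason the paper works directly with oscillations. Finally, do not forget to verify stochastic continuity of $F\ast L$ and $F_\gamma\ast L$, which is required to invoke Theorem~\ref{th.stochconv=fdd+M}; the paper does this at the outset via the characteristic function and \eqref{eq.proMconv-cadlag}.
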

\begin{proof}
We show that each $v\in D$ the stochastic processes $X$ and $X_\gamma$, $\gamma>0$,
defined by
\begin{align*}
  X(t):=\scapro{\int_0^t F(t -s)\,dL(s)}{v},
  \quad X_\gamma(t):=\scapro{\int_0^t F_\gamma(t -s)\,dL(s)}{v},
  \quad\text{ $t\in [0,T]$,}
\end{align*}
 satisfy the  conditions in Theorem \ref{th.stochconv=fdd+M}. Note, that
 for $0\le t_1\le t_2\le T$ and $\beta\in\R$ we have
\begin{align*}
&E\left[\exp\left(i\beta (X(t_1)-X(t_2))\right)\right]\\
&\quad =\exp\left(\int_0^{t_1} \Psi\big((F^\ast(t_1-s)-F^\ast(t_2-s))(\beta v)\big)\,ds\right)
\exp\left(\int_{t_1}^{t_2} \Psi\big((F^\ast(t_2-s)(\beta v)\big)\,ds\right).
\end{align*}
Since $F^\ast(\cdot)(\beta v)$ is Lebesgue almost everywhere continuous due to \eqref{eq.proMconv-cadlag} it follows
that the stochastic process $X$ and analogously $X_\gamma$ are stochastically continuous.

It follows from Condition \eqref{eq.proMconv-conv} by Theorem 12.5.1 in \cite{whitt02}
for each $u\in U$ and $v\in D$  that
\begin{align}\label{eq.F-weakly-cont}
  \lim_{\gamma\to\infty}\scapro{F_\gamma(t)u}{v}=\scapro{F(t)u}{v}
  \qquad\text{for all }t \in \big(J( \scapro{F(\cdot)u}{v})\big)^c.
\end{align}
Since the set $J( F^\ast(\cdot)v)$ of discontinuities of $F^\ast(\cdot)v$ is a Lebesgue null set by Condition
\eqref{eq.proMconv-cadlag} and satisfies
$\big(J( \scapro{F(\cdot)u}{v})\big)\subseteq \big(J( F^\ast(\cdot)v)\big) $ for every $u\in U$, Lemma \ref{le.fddRn} guarantees that Condition (i) in Theorem \ref{th.stochconv=fdd+M} is satisfied.

In order to show Condition (ii) we have to establish for every $\epsilon>0$ and $v\in D$ that
\begin{align}\label{eq.pprove-cond(ii)}
  \lim_{\delta\searrow 0}\limsup_{\gamma\to\infty} P\left(M\left(\scapro{\int_0^\cdot F_\gamma(\cdot -s)\,dL(s)}{v},\,\delta\right)\ge \epsilon\right)=0.
\end{align}
For this purpose, fix some constants $\epsilon_1,\epsilon_2>0$ and $v\in D$.
Condition \eqref{eq.proMconv-TV} enables us to choose a constant $\alpha>0$  such that
\begin{align}\label{eq.constantalpha}
\sup_{\gamma>0}\sum_{k=1}^\infty \norm{\scapro{F_\gamma^\ast (\cdot)v}{i_\alpha h_k^\alpha}}_{TV_2}
\le \frac{\epsilon_1\epsilon_2}{2\varkappa\sqrt{2T} }
\end{align}
for $\varkappa:=32\sqrt{2} \int_0^1 \sqrt{\ln(1/s)}\,ds$.
As in \eqref{eq.LevyIto} we decompose the L{\'e}vy process $L$ into $L(t)=W(t)+X_\alpha(t)+Y_\alpha(t)$ for all $t\ge 0$, 
but we suppress the notion of $\alpha$ in the sequel. Here, $W$ is a  Wiener process with covariance operator 
$Q\colon U\to U$, and $X$ and $Y$  are $U$-valued L{\'e}vy processes with characteristic functions
\begin{align*}
  \phi_{X(t)}(u)&=\exp\left(-t\int_{\norm{r}\le \alpha} \left(e^{i\scapro{r}{u}}
  - 1 - i\scapro{r}{u}\right)\,\nu(dr)\right),\\
  \phi_{Y(t)}(u)&=\exp\left(it\scapro{b}{u}-t\int_{\alpha<\norm{r}} \left(e^{i\scapro{r}{u}}
  - 1 \right)\,\nu(dr)\right),
\end{align*}
for all $t\ge 0$, $u\in U$. 
It follows from \eqref{eq.integralswap} for every $t\in [0,T]$ and $\gamma>0$ that
\begin{align*}
\scapro{\int_0^t F_\gamma(t-s)\,dL(s)}{v}
&=\int_0^t F_\gamma^\ast (t-s)v \, dL(s).
\end{align*}
By the decomposition of $L$ we obtain the  representation
\begin{align*}
I_{\gamma}(t)&:=\int_0^t F_\gamma^\ast(t-s)v\,dL(s) = C_{\gamma}(t)+ A_{\gamma}(t)+ B_{\gamma}(t),
\intertext{where we define the real-valued random variables}
 C_{\gamma}(t)&:=\int_0^t F_\gamma^\ast(t-s)v\,dW(s),\\
  A_{\gamma}(t)&:= \int_0^t F_\gamma^\ast(t-s)v\,dY(s) ,\quad
 B_{\gamma}(t):= \int_0^t F_\gamma^\ast (t-s)v\,dX(s).
\end{align*}
In the sequel, we will consider the three stochastic integrals separately.

\noindent
1) We show that
\begin{align}\label{eq.pWestimate}
  \lim_{\delta\searrow 0}\limsup_{\gamma\to\infty} P\left(\sup_{\substack{t_1,t_2\in [0,T] \\ \abs{t_2-t_1}\le \delta}} \abs{C_{\gamma}(t_2)-C_{\gamma}(t_1)}\ge \epsilon_1\right)=0.
\end{align}
Let the covariance operator $Q$ of $W$ be decomposed into $Q=i_Qi^\ast_Q$ for some $i_Q\in \L(H_Q,U)$ and for a Hilbert space $H_Q$. Since the characteristic function $\phi_{W(1)}$ of
$W(1)$ is sequentially weakly continuous and is given by
\begin{align*}
  \phi_{W(1)}(u)=\exp\left(-\tfrac{1}{2}\norm{i^\ast u}_{H_Q}^2\right)
  \qquad\text{for all } u\in U,
\end{align*}
also the function $i^\ast\colon U\to H_Q$ is sequentially weakly continuous. Consequently,  
we can conclude from \eqref{eq.F-weakly-cont} that
\begin{align*}
  \lim_{\gamma\to \infty}\norm{i_Q^\ast\big(F^\ast_\gamma (s)v-F^\ast(s)v)}^2_{H_Q}=0
  \qquad\text{for Lebesgue-a.a.\ }s\in [0,T].
\end{align*}
Due to \eqref{eq.proMconv-bound} Lebesgue's theorem of dominated convergence
implies
\begin{align}\label{eq.intF*W}
\lim_{\gamma\to \infty}  \int_0^T \norm{i_Q^\ast\big(F^\ast_\gamma (s)v-F^\ast(s)v)}^2_{H_Q}\,ds=0.
\end{align}
Let $I:=[0,T]\cap \Q$ and define for each $\gamma>0$ the stochastic processes
$G_\gamma:=(G_\gamma(t)\colon  t\in I)$ where $G_\gamma(t):=C_\gamma(t)-\scapro{F\ast W(t)}{v}$.
Since $G_\gamma$ has independent increments it is a real-valued Gaussian process with
a.s.\ bounded sample paths due to Assumption A. By denoting $ M_\gamma:=\sup_{t\in I}G_\gamma(t)$
it follows for each $\delta>0$ that
\begin{align}\label{eq.C-G-M}
& \sup_{\substack{t_1,t_2\in I \\ \abs{t_2-t_1}\le \delta}} \abs{C_{\gamma}(t_2)-C_{\gamma}(t_1)}\notag\\
&\qquad \le 2\sup_{t\in I}\abs{G_\gamma(t)-E[M_\gamma]}+
\sup_{\substack{t_1,t_2\in I \\ \abs{t_2-t_1}\le \delta}}\abs{\scapro{F\ast W(t_2)-F\ast W(t_1)}{v}}.
\end{align}
Borell's inequality, see \cite[Theorem 2.1]{Adler90}, (or Borell--Tsirelson--Ibragimov--Sudakov inequality)
implies $E[M_\gamma]<\infty$  and that for every $\epsilon>0$
\begin{align*}
  P\left(\sup_{t\in I}\abs{G_\gamma(t)-E[M_\gamma]}\ge \epsilon\right)
  \le 2  P\left(\sup_{t\in I}G_\gamma(t)-E[M_\gamma]\ge \epsilon\right)
  \le \exp\left( -\frac{\epsilon^2}{2\sigma_\gamma^2}\right),
\end{align*}
where $\sigma_\gamma^2:=\sup_{t\in I} E[G_\gamma(t)^2]$.
Since equation \eqref{eq.intF*W} guarantees
\begin{align*}
 \sigma_\gamma^2 &=\int_0^T \norm{i_Q^\ast \left(F_\gamma^\ast(s)v-F^\ast(s)v\right)}_{H_Q}^2\,ds
 \to 0\quad\text{as }\gamma\to\infty,
\end{align*}
we can conclude
\begin{align}\label{eq.C-G-M-M}
  \lim_{\gamma\to\infty}  P\left(\sup_{t\in I}\abs{G_\gamma(t)-E[M_\gamma]}\ge \frac{\epsilon_1}{4}\right)
  =0.
\end{align}
As  the stochastic process $(\scapro{F\ast W(t)}{v}\colon t\in [0,T])$ is continuous according to Assumption A it follows
\begin{align}\label{eq.C-G-M-ast}
  \lim_{\delta\searrow 0}\sup_{\substack{t_1,t_2\in I \\ \abs{t_2-t_1}\le \delta}}\abs{\scapro{F\ast W(t_2)-F\ast W(t_1)}{v}}
  =0 \quad\text{$P$-a.s.}
\end{align}
Equations \eqref{eq.C-G-M-M} and \eqref{eq.C-G-M-ast}
establish \eqref{eq.pWestimate}.

\noindent
2) Lemma \ref{le.MarcusRosHilbert} implies by Markov inequality for each $\gamma>0$
\begin{align}\label{eq.pXestimate}
P\left(\sup_{t\in [0,T]} \abs{B_\gamma(t)}\ge \epsilon_1\right)
\le \frac{\varkappa \sqrt{2T}}{\epsilon_1}\sum_{k=1}^\infty \norm{\scapro{F_\gamma^\ast (\cdot)v}{i_\alpha h_k^\alpha}}_{TV_2}
\le \frac{\epsilon_2}{2}.
\end{align}

\noindent
3) Let $(N(t)\colon t\in [0,T])$ denote the counting process for $Y$, i.e.
\begin{align*}
  N(t):=\sum_{s\in [0,t]} \1_{\{\norm{\Delta Y(s)}> \alpha\} }
  \qquad\text{for all }t\in [0,T],
\end{align*}
 and let $\tau_{j}$, $j\in \N\cup\{0\}$, denote the jump times of $Y$,
 recursively defined by $\tau_{0}=0$ and
 \begin{align*}
   \tau_{j}:=\inf\big\{t> \tau_{j-1}\colon \norm{\Delta Y(t)}>\alpha\big\}
   \qquad\text{for all }j\in \N
 \end{align*}
with $\inf \emptyset =\infty$.
Note, that the jump times of $Y$ are countable in increasing order since the jump size of $Y$ is bounded from below by $\alpha$.
Since $Y$ is a pure jump process with  drift $b$, we obtain
\begin{align*}
&A_\gamma(t)=\int_0^t F_\gamma^\ast(t-s)v\, dY(s)
  = R_\gamma (t) + S_\gamma(t),
\intertext{where we define for every $t\in[0,T]$ and $\gamma>0$ }
& R_\gamma (t):= \sum_{j=0}^{N(t)}R_\gamma^j(t),\qquad
R_\gamma^j(t):=\scapro{F_\gamma^\ast(t-\tau_{j})v}{\Delta Y(\tau_{j})}\1_{[\tau_{j},\infty)}(t),\\
& S_\gamma(t):=  \int_0^t \scapro{F_\gamma^\ast(t-s)v}{b}\,ds.
\end{align*}
Analogously, we have
\begin{align*}
&A(t)=\int_0^t F^\ast(t-s)v\, dY(s)
  = R (t) + S(t),
\intertext{where we define for every $t\in[0,T]$  }
& R(t):= \sum_{j=0}^{N(t)}R^j(t),\qquad
R^j(t):=\scapro{F^\ast(t-\tau_{j})v}{\Delta Y(\tau_{j})}\1_{[\tau_{j},\infty)}(t),\\
& S(t):=  \int_0^t \scapro{F^\ast(t-s)v}{b}\,ds.
\end{align*}
Due to Condition \eqref{eq.proMconv-cadlag}
the stochastic process $R^j:=(R^j(t)\colon t\in [0,T])$
 has c{\`a}dl{\`a}g paths for each $j\in \N\cup\{0\}$ and the random set $J(R^j)$ of its discontinuities  satisfies
\begin{align*}
  J(R^j)&\subseteq \Big\{t\in [0,T]\colon  t-\tau_j\in J\big(F^\ast(\cdot)v\big),\, t\in (\tau_j,T]\Big\}\cup\{\tau_j\}.
\end{align*}
Consequently, the set of joint discontinuities of
$R^i$ and $R^j$ for $i<j$ satisfies
\begin{align*}
  J(R^i)\cap J(R^j)
  \subseteq \Big\{t\in [0,T]\colon  t=\tau_j(\omega)-\tau_i(\omega)
 \in J\big(F^\ast(\cdot)v\big)-J\big(F^\ast(\cdot)v\big)\,\text{ for some }\omega\in\Omega\Big\},
\end{align*}
where we apply the convention $\infty-\infty=\infty$.
Since the deterministic set
\begin{align*}
  J\big(F^\ast(\cdot)v\big)-J\big(F^\ast(\cdot)v\big)=\{t\in [0,T]\colon 
   t=s_1-s_2\text{ for some } s_1,s_2\in J\big(F^\ast(\cdot)v\big)\}
\end{align*}
is at most countable and the random vector $(\tau_i,\tau_j)$
is absolutely continuous for every $i< j$ it follows that
\begin{align*}
P\big( \text{$R^i$ and $R^j$ have joint discontinuities} \big)
&= P\big(J(R^i)\cap J(R^j)\big)\\
&\le P\big( \tau_j-\tau_i \in J\big(F^\ast(\cdot)v\big) - J\big(F^\ast(\cdot)v\big) \big)
=0.
\end{align*}
Consequently, for  the stochastic processes $(R^j)_{j\in\N}$ restricted to the complement $S^c$ of the null set
\begin{align*}
  S:=\bigcup_{i=2}^\infty \bigcup_{j=1}^{i-1} 
  \big\{\omega\in\Omega\colon \tau_i(\omega)-\tau_j(\omega) \in J\big(F^\ast(\cdot)v\big)\cap J\big(F^\ast(\cdot)v\big)\big\},
\end{align*}
there does not exist any $i\neq j$ such that $R^i$ and $R^j$ have joint discontinuities. Since Condition \eqref{eq.proMconv-conv} guarantees that for each $\omega\in \Omega $
\begin{align*}
\scapro{F_\gamma^\ast(\cdot-\tau_{j}(\omega))v}{\Delta Y(\tau_{j})(\omega)}
\to \scapro{F^\ast(\cdot-\tau_{j}(\omega))v}{\Delta Y(\tau_{j})(\omega)}
\end{align*}
in $(D([0,T];\R), d_M)$ it follows that
$R_\gamma^j(\omega)\to R^j(\omega)$  in  $(D([0,T];\R),d_M)$ for all  $\omega\in\Omega$ and $j\in\N$. Due to the 
disjoint sets of discontinuities on $S^c$, Corollary 12.7.1 in \cite{whitt02} guarantees
that $R_\gamma(\omega)$ converges to $R(\omega)$ in $(D([0,T];\R),d_M)$ for all $\omega\in S^c$. The deterministic analogue of
Theorem \ref{th.stochconv=fdd+M}, i.e.\ \cite[Theorem 12.5.1]{whitt02}, implies that
\begin{align*}
  \lim_{\delta\searrow 0}\limsup_{\gamma\to\infty} M(R_\gamma(\omega);\delta)=0
  \qquad\text{for all $\omega\in S^c$.}
\end{align*}
 By combining with Fatou's Lemma it follows that there exists $\delta_0>0$ such that for all $\delta\le \delta_0$ we have
\begin{align*}
\limsup_{\gamma\to\infty} P\Big( M(R_\gamma;\delta)\ge \tfrac{1}{2}\epsilon_1\Big)
&\le  P\Big(\limsup_{\gamma\to\infty} \{M(R_\gamma;\delta)\ge \tfrac{1}{2}\epsilon_1\}\Big)
\\
&= P\Big(\limsup_{\gamma\to\infty}   M(R_\gamma;\delta)\ge \tfrac{1}{2}\epsilon_1\Big) 
\le \epsilon_2.
\end{align*}
Since inequality \eqref{eq.f+gcont} implies
\begin{align*}
M(A_\gamma;\delta)=  M(R_\gamma + S_\gamma;\delta)
  &\le M(R_\gamma;\delta)+ \sup_{\substack{s_1,s_2\in [0,T]\\ \abs{s_2-s_1}\le \delta}}\abs{S_\gamma(s_1)-S_\gamma(s_2)}\\
  &\le  M(R_\gamma;\delta)+ \delta \sup_{\gamma>0}\norm{\scapro{v}{F_\gamma (\cdot)b}}_\infty,
\end{align*}
it follows that for all $\delta\le \delta_1$ where  
$\delta_1:=\min\{\delta_0,\, \tfrac{\epsilon_1}{3} (\sup_{\gamma>0}\norm{\scapro{v}{F_\gamma (\cdot)b}}_\infty)^{-1}\}$ we have
\begin{align}\label{eq.pYestimate}
\limsup_{\gamma\to\infty} P\Big( M(A_\gamma;\delta)\ge\epsilon_1\Big)
\le \limsup_{\gamma\to\infty}
P\Big( M(R_\gamma;\delta)\ge  \tfrac{1}{2}\epsilon_1\Big)
\le \epsilon_2.
\end{align}

\noindent
4) It follows from \eqref{eq.pWestimate} and\eqref{eq.pXestimate}  that
there exists  $\delta_2>0$ such that for each $\gamma>0$ the set
\begin{align*}
  E_\gamma:=\left\{\sup_{\abs{t_2-t_1}\le \delta_2} \abs{C_\gamma(t_2)-C_\gamma(t_1)}\le \epsilon_1\right\}
  \cap \left\{ \sup_{t\in [0,T]}\abs{B_\gamma(t)}\le \epsilon_1\right\}
\end{align*}
satisfies $P(E_\gamma)\ge 1-\epsilon_2$.
It follows from \eqref{eq.pYestimate}
by the inequalities \eqref{eq.f+ginfty} and \eqref{eq.f+gcont} for all
$\delta\le \min\{\delta_1,\delta_2\}$ that
\begin{align*}
\limsup_{\gamma\to\infty} P\Big(  M(I_\gamma;\delta))\ge 4\epsilon_1\Big)
\le \limsup_{\gamma\to\infty}  P\big( M(A_\gamma;\delta)\ge \epsilon_1\big)+  \limsup_{\gamma\to\infty} P(E^c_\gamma)
\le 2\epsilon_2,
\end{align*}
which shows \eqref{eq.pprove-cond(ii)} and thus, Condition (ii) in Theorem \ref{th.stochconv=fdd+M}.
\end{proof}

\subsection{The diagonal case}

In this section we consider the special case that the integrands can be diagonalised. For that purpose,
assume that $(e_k)_{k\in\N}$ is an orthonormal basis of $V$.
\begin{corollary}\label{co.Mproductconv-diagonal}
Let $F, F_\gamma\in\H^2(U,V)$, $\gamma>0$,  be functions of the form
\begin{align*}
  F^\ast (s)e_k =\phi^k(s)\, G e _k,\qquad\qquad  F_\gamma^\ast (s)e_k =\phi_\gamma^k(s)\, G e_k \qquad\text{for all }s\in [0,T], \,k\in\N, \end{align*}
for c{\`a}dl{\`a}g functions $\phi^k$, $\phi_\gamma^k\colon [0,T]\to \R$ and $G\in \L(V,U)$. If
\begin{align}
  {\rm (i)}&\, \sup_{\gamma>0} \norm{\phi_\gamma^k}_\infty<\infty\qquad\text{for all }k\in\N;\label{eq.thmMconvprod-diagonal-bound}\\
    {\rm (ii)}&\, \sup_{\gamma> 0} \norm{\phi_\gamma^k}_{TV_2} <\infty\qquad\text{for all }k\in\N;\label{eq.thmMconvprod-diagonal-TV}\\
  {\rm (iii)}&\, \lim_{\gamma\to\infty}\phi_\gamma^k = \phi^k
  \text{in $\big(D([0,T];\R),d_M\big)$}\qquad\text{for all }k\in\N.\label{eq.thmMconvprod-diagonal-phiinM}
\end{align}
then it follows for each $\epsilon>0$ and $k\in \N$ that
\begin{align*}
\lim_{\gamma\to\infty} \big(F_\gamma \ast L(t)\colon t\in [0,T]\big)
= \big(F \ast L(t)\colon t\in [0,T]\big)
\end{align*}
 in probability in the product topology $\big(D([0,T];V), d_M^{\,e}\big)$.
\end{corollary}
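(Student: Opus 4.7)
The plan is to reduce to Corollary~\ref{co.productcon=M+ffd} (the product-topology criterion) with $D = \{e_k : k \in \N\}$, which requires verifying for each fixed $k$ both the convergence of the one-dimensional marginals of $\scapro{F_\gamma \ast L(t)}{e_k}$ and the corresponding oscillation estimate. I would mirror the architecture of the proof of Theorem~\ref{th.Dconv}, but with one crucial modification: replace its use of the general Marcus--Rosi\'nski bound (Lemma~\ref{le.MarcusRosHilbert}) by the sharper diagonal version (Lemma~\ref{le.MarcusRosHilbert-realvalued}), which is exactly what the diagonal hypothesis is designed to exploit.

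For the marginal convergence, I would verify the hypotheses of Lemma~\ref{le.fddRn} with $D = \{e_k : k \in \N\}$. The key observation is that $\scapro{F_\gamma(s) u}{e_k} = \phi_\gamma^k(s)\scapro{u}{Ge_k}$, so condition~(i) of Lemma~\ref{le.fddRn} is immediate from the c\`adl\`ag property of $\phi_\gamma^k$, condition~(ii) follows from hypothesis~(i) of the corollary, and condition~(iii) follows from hypothesis~(iii) (noting that $d_M$-convergence is stable under multiplication by a scalar and hence, by part~(iv) of Theorem~12.5.1 in Whitt, provides pointwise convergence off the countable jump set of $\phi^k$). Lemma~\ref{le.fddRn} then yields $\scapro{F_\gamma \ast L(t)}{e_k} \to \scapro{F\ast L(t)}{e_k}$ in probability.

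For the oscillation estimate, I would decompose $L = W + X_\alpha + Y_\alpha$ as in the proof of Theorem~\ref{th.Dconv} and split $\scapro{F_\gamma \ast L(t)}{e_k}$ into a Wiener part $C_\gamma$, a small-jump part $B_\gamma$ and a large-jump-plus-drift part $A_\gamma$. The Wiener and large-jump contributions are handled exactly as in parts~1) and~3) of that proof (Borell's inequality for $C_\gamma$ and a pathwise application of Corollary~12.7.1 of Whitt for $A_\gamma$), both of which only require hypotheses~(i) and~(iii) together with Assumption~A. The main obstacle is the small-jump term: condition~\eqref{eq.proMconv-TV} of Theorem~\ref{th.Dconv} specialised to $v = e_k$ would demand uniform smallness of $\norm{\phi_\gamma^k}_{TV_2} \sum_{j} \abs{\scapro{Ge_k}{i_\alpha h_j^\alpha}}$, but the only quantity naturally controlled is the $\ell^2$-sum $\sum_j \abs{\scapro{Ge_k}{i_\alpha h_j^\alpha}}^2 = \scapro{R_\alpha Ge_k}{Ge_k}$, which need not translate into an $\ell^1$-bound.

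The resolution is to bypass Lemma~\ref{le.MarcusRosHilbert} altogether and invoke Lemma~\ref{le.MarcusRosHilbert-realvalued} for the single test vector $v = e_k$, yielding
\begin{align*}
E\left[\sup_{t\in [0,T]}\abs{\int_0^t \phi^k_\gamma(t-s)\scapro{Ge_k}{dX_\alpha(s)}}\right]
 \le \varkappa\sqrt{2T}\,\norm{\phi^k_\gamma}_{TV_2}
 \left(\int_{\norm{u}\le\alpha}\abs{\scapro{u}{Ge_k}}^2\,\nu(du)\right)^{1/2}.
\end{align*}
Hypothesis~(ii) of the corollary bounds $\norm{\phi^k_\gamma}_{TV_2}$ uniformly in $\gamma$, while $\abs{\scapro{u}{Ge_k}}^2 \le \norm{Ge_k}^2\norm{u}^2$ together with $\int(\norm{u}^2\wedge 1)\,\nu(du)<\infty$ forces the integral to vanish as $\alpha\searrow 0$. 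Markov's inequality then makes $\sup_t\abs{B_\gamma(t)}$ uniformly small in probability, and gluing the three parts via inequalities~\eqref{eq.f+ginfty} and~\eqref{eq.f+gcont} for the oscillation function completes the verification of condition~(ii) of Corollary~\ref{co.productcon=M+ffd}.
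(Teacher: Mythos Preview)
Your proposal is correct and follows essentially the same approach as the paper: both mirror the proof of Theorem~\ref{th.Dconv} with $D=\{e_1,e_2,\dots\}$, replacing the small-jump estimate~\eqref{eq.pXestimate} by an application of Lemma~\ref{le.MarcusRosHilbert-realvalued} (rather than Lemma~\ref{le.MarcusRosHilbert}), so that hypothesis~\eqref{eq.thmMconvprod-diagonal-TV} together with $\int_{\norm{u}\le\alpha}\scapro{u}{Ge_k}^2\,\nu(du)\to 0$ yields the required uniform control of $B_\gamma$. Your identification of the $\ell^1$ versus $\ell^2$ obstruction in condition~\eqref{eq.proMconv-TV} as the reason for this substitution is exactly the point, and the remaining verifications of conditions~\eqref{eq.proMconv-cadlag}, \eqref{eq.proMconv-bound}, \eqref{eq.proMconv-conv} from the diagonal hypotheses are the same as in the paper.
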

\begin{proof}
The proof is analogously to the proof of Theorem \ref{th.Dconv} for $D=\{e_1,e_2,\dots\}$ but 
only the estimate \eqref{eq.pXestimate} is derived in the following way: for each $k\in\N$ and $\gamma>0$  Lemma
\ref{le.MarcusRosHilbert-realvalued} implies
\begin{align*}
E\left[\sup_{t\in [0,T]} \abs{\int_0^t F_\gamma^\ast(t-s)e_k\,dX(s) } \right]
\le \varkappa \sqrt{2T} \norm{\phi_k(\cdot)}_{TV_2}\left(\int_{\norm{u}\le \alpha} \abs{\scapro{u}{Ge_k}}^2\,\nu(du)\right)^{1/2},
\end{align*}
where $\varkappa:=32\sqrt{2} \int_0^1 \sqrt{\ln(1/s)}\,ds$ and $\alpha>0$ denotes the bound of the truncation function.  Condition \eqref{eq.thmMconvprod-diagonal-TV} enables us to choose for every $\epsilon_1$, $\epsilon_2>0$ and $k\in\N$ the constant  $\alpha>0$ small enough such that
\begin{align*}
 P\left(\sup_{t\in [0,T]} \abs{\int_0^t F_\gamma^\ast(t-s)e_k\,dX(s)}\ge \epsilon_1\right)
 \le \frac{\epsilon_2}{2}
 \qquad\text{for all }\gamma>0,
\end{align*}
which corresponds to inequality \eqref{eq.pXestimate}. Now we can follow the proof of Theorem \ref{th.Dconv}
once the Conditions \eqref{eq.proMconv-cadlag}, \eqref{eq.proMconv-bound} and \eqref{eq.proMconv-conv} are established. 
The first two conditions follow directly from the c{\`a}dl{\`a}g property of $\phi^k$, $\phi_\gamma^k$ and 
\eqref{eq.thmMconvprod-diagonal-bound}. Condition  \eqref{eq.proMconv-conv} is satisfied since 
Condition \eqref{eq.thmMconvprod-diagonal-phiinM} implies by Theorem 12.7.2 in \cite{whitt02} for each $k\in \N$ and $u\in U$ that
\begin{align*}
 \lim_{\gamma\to\infty} \scapro{F_{\gamma}(\cdot)u}{e_k}
  = \lim_{\gamma\to\infty}\phi_\gamma^k(\cdot) \scapro{u}{Ge_k}
  = \phi^k(\cdot) \scapro{u}{Ge_k}
  = \scapro{F(\cdot)u}{e_k},
\end{align*}
in $\big(D([0,T];\R), d_M\big)$.
\end{proof}

\begin{corollary}\label{co.M-weak-diagonal-conv}
Let  $F, F_\gamma\in\H^2(U,V)$, $\gamma>0$,  be functions of the form
\begin{align*}
  F^\ast (s)e_k =\phi^k(s)\, G e_k,\qquad\qquad  
  F_\gamma^\ast (s)e_k =\phi_\gamma^k(s)\, G e_k \qquad\text{for all }s\in [0,T], \,k\in\N, 
\end{align*}
for c{\`a}dl{\`a}g functions $\phi^k$, $\phi_\gamma^k\colon [0,T]\to \R$ and $G\in \L(V,U)$. If
\begin{align}
 {\rm (i)}&\,
  F^\ast(\cdot)v,\, F_\gamma^\ast (\cdot)v\in D([0,T];U)
     \quad\text{for all }v\in V   \text{ and } \gamma>0;\\
 {\rm (ii)}&\, \sup_{\gamma>0}\sup_{k\in\N}\norm{\phi_\gamma^k}_\infty<\infty; \label{eq.M-weak-diagonal-conv-bound}\\
    {\rm (iii)}&\,
    \sup_{\gamma >0}\sup_{k\in\N}\norm{\phi_\gamma^k}_{TV_2}<\infty; \label{eq.M-weak-diagonal-conv-TV}\\
  \begin{split}
    {\rm (iv)}&\, \text{for each $n\in\N$ we have}\\
    &\, \lim_{\gamma\to\infty}\big(\phi_\gamma^1, \dots, \phi_\gamma^n\big)
           = \big(\phi^1, \dots, \phi^n\big) \quad \text{in $\big(D([0,T];\R^n),d_M\big)$},
    \end{split}
           \label{eq.M-weak-diagonal-conv-phiinM}
\end{align}
then it follows that
\begin{align*}
\lim_{\gamma\to\infty} \big(F_\gamma \ast L(t)\colon t\in [0,T]\big)
= \big(F \ast L(t)\colon t\in [0,T]\big)
\end{align*}
weakly in probability in $D([0,T];V)$.
\end{corollary}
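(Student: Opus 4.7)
The plan is to invoke Corollary \ref{co.weakcon=M+ffd}, which reduces the claim to verifying, for each $v \in V$, the two conditions of Theorem \ref{th.stochconv=fdd+M} applied to the real-valued processes $(\scapro{F_\gamma\ast L(t)}{v})_{t\in[0,T]}$ and $(\scapro{F\ast L(t)}{v})_{t\in[0,T]}$. The overall strategy is to adapt the proof of Theorem \ref{th.Dconv}(2) with $D = V$, using the diagonal bound of Lemma \ref{le.MarcusRosHilbert-realvalued} in place of Lemma \ref{le.MarcusRosHilbert}. Expanding $F_\gamma^\ast(s)v = \sum_k \scapro{v}{e_k}\phi_\gamma^k(s)\,Ge_k$ and setting $a_k := \scapro{v}{e_k}\scapro{u}{Ge_k}$, Cauchy--Schwarz gives $\sum_k \abs{a_k} \le \norm{v}\norm{G^\ast u}$. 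Combined with hypothesis (ii) of the corollary, this yields the uniform supremum bound $\sup_\gamma \norm{\scapro{F_\gamma(\cdot)u}{v}}_\infty<\infty$ required as hypothesis (ii) of Theorem \ref{th.Dconv}.

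For the $d_M$-convergence of $\scapro{F_\gamma(\cdot)u}{v}$ to $\scapro{F(\cdot)u}{v}$, I approximate by partial sums $f_n^\gamma := \sum_{k=1}^n a_k \phi_\gamma^k$ and apply Lemma \ref{le.sup+M}. For fixed $n$, hypothesis (iv) of the corollary provides parametric representations $(r_\gamma,u_\gamma)$ of the vector $(\phi_\gamma^1,\dots,\phi_\gamma^n)$ converging uniformly in $[0,1]$ to one of $(\phi^1,\dots,\phi^n)$. Because such a representation encodes a single convex-combination parameter $\alpha(s)\in[0,1]$ shared across coordinates, $(r_\gamma, \sum_{k=1}^n a_k u_{\gamma,k})$ is a valid parametric representation of $f_n^\gamma$, and a second Cauchy--Schwarz gives $\norm{\sum_k a_k(u_{\gamma,k}-u_k)}_\infty \le (\sum_k\abs{a_k}^2)^{1/2}\norm{u_\gamma-u}_\infty \to 0$, whence $f_n^\gamma \to f_n := \sum_{k=1}^n a_k\phi^k$ in $d_M$. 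The tail bound $\norm{\scapro{F_\gamma(\cdot)u}{v} - f_n^\gamma}_\infty \le (\sup_{\gamma,k}\norm{\phi_\gamma^k}_\infty)\sum_{k>n}\abs{a_k}$ vanishes uniformly in $\gamma$ as $n\to\infty$, so Lemma \ref{le.sup+M} yields the desired $d_M$-convergence. Pointwise convergence in probability, i.e.\ condition (i) of Theorem \ref{th.stochconv=fdd+M}, then follows from Lemma \ref{le.fddRn}, whose hypothesis (iii) is supplied by the $d_M$-convergence just established.

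For the oscillation condition, I reproduce the decomposition $L = W + X_\alpha + Y_\alpha$ from the proof of Theorem \ref{th.Dconv}. The Wiener and large-jump contributions use only conditions already verified. For the small-jump term $B_\gamma(t) := \int_0^t F_\gamma^\ast(t-s)v\,dX_\alpha(s)$, Lemma \ref{le.MarcusRosHilbert-realvalued} together with hypothesis (iii) of the corollary and a Fubini--Cauchy--Schwarz calculation gives
\begin{align*}
  E\left[\sup_{t\in[0,T]}\abs{B_\gamma(t)}\right]
  &\le \varkappa\sqrt{2T}\,C\sum_k \abs{\scapro{v}{e_k}}\left(\int_{\norm{r}\le\alpha}\scapro{r}{Ge_k}^2\,\nu(dr)\right)^{1/2} \\
  &\le \varkappa\sqrt{2T}\,C\,\norm{v}\left(\int_{\norm{r}\le\alpha}\norm{G^\ast r}^2\,\nu(dr)\right)^{1/2},
\end{align*}
where $C := \sup_{\gamma,k}\norm{\phi_\gamma^k}_{TV_2}<\infty$. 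The right-hand side vanishes as $\alpha\to 0$ uniformly in $\gamma$, supplying the replacement for \eqref{eq.pXestimate}, after which the rest of the oscillation argument goes through verbatim.

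The main obstacle is the $d_M$-convergence step for $\scapro{F_\gamma(\cdot)u}{v}$ with arbitrary $v\in V$: because addition is not continuous in the $M_1$ topology, one cannot obtain convergence of the series $\sum_k a_k\phi_\gamma^k$ from coordinatewise $d_M$-convergence alone. The resolution requires both the joint convergence in $D([0,T];\R^n)$ supplied by hypothesis (iv) of the corollary --- which provides a common parametric representation that survives linear combination --- and the approximation device of Lemma \ref{le.sup+M}, made applicable by the uniform summability $\sum_k \abs{a_k}<\infty$ in the supremum norm.
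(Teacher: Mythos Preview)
Your proposal is correct and follows essentially the same route as the paper's proof: both adapt the proof of Theorem \ref{th.Dconv} with $D=V$, replace the small-jump estimate \eqref{eq.pXestimate} by the diagonal bound of Lemma \ref{le.MarcusRosHilbert-realvalued} combined with Cauchy--Schwarz, and establish Condition \eqref{eq.proMconv-conv} via the partial-sum approximation and Lemma \ref{le.sup+M}. The only difference is that for the step $f_n^\gamma \to f_n$ in $d_M$ the paper simply cites Theorem 12.7.2 in \cite{whitt02}, whereas you give the direct parametric-representation argument (linear images of a joint parametric representation remain parametric representations); this is a minor expository choice, not a different strategy.
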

\begin{proof}
The proof is analogous to the proof of Theorem \ref{th.Dconv} for $D=V$ but only the estimate \eqref{eq.pXestimate} is derived in the following way: for each $v\in V$ and $\gamma>0$  Lemma
\ref{le.MarcusRosHilbert-realvalued} and Cauchy--Schwarz inequality imply
\begin{align*}
&E\left[\sup_{t\in [0,T]} \abs{\int_0^t F_\gamma^\ast(t-s)v\,dX(s) } \right]\\
&\qquad\qquad\le  \varkappa \sqrt{2T}\sum_{k=1}^\infty \abs{\scapro{v}{e_k}}\norm{\phi_\gamma^k(\cdot)}_{TV_2}\left(\int_{\norm{u}\le \alpha} \abs{\scapro{u}{Ge_k}}^2\,\nu(du)\right)^{1/2}\\
&\qquad\qquad\le
  \varkappa \sqrt{2T}\sup_{k\in\N}\norm{\phi_\gamma^k(\cdot)}_{TV_2} \left(\sum_{k=1}^\infty \scapro{v}{e_k}^2\right)^{1/2}\left(\sum_{k=1}^\infty \int_{\norm{u}\le \alpha} \scapro{u}{Ge_k}^2\,\nu(du)\right)^{1/2}\\
&\qquad\qquad =
  \varkappa \sqrt{2T}\sup_{k\in\N}\norm{\phi_\gamma^k(\cdot)}_{TV_2}\norm{v} \left( \int_{\norm{u}\le\alpha} \norm{G^\ast u}^2\, \nu(du)\right)^{1/2},
\end{align*}
where $\varkappa:=32\sqrt{2} \int_0^1 \sqrt{\ln(1/s)}\,ds$ and $\alpha>0$ denotes the bound of the truncation function.  Condition \eqref{eq.M-weak-diagonal-conv-TV} enables us to choose for every $\epsilon_1$, $\epsilon_2>0$ the constant  $\alpha>0$ small enough such that
\begin{align*}
 P\left(\sup_{t\in [0,T]} \abs{\int_0^t F_\gamma^\ast(t-s)v\,dX(s)}\ge \epsilon_1\right)
 \le \frac{\epsilon_2}{2} \qquad\text{for all }\gamma>0,
\end{align*}
which corresponds to inequality \eqref{eq.pXestimate}. Now we can follow the proof of Theorem \ref{th.Dconv}
once the Conditions \eqref{eq.proMconv-bound} and \eqref{eq.proMconv-conv}
are established. Cauchy--Schwarz inequality implies  for each $u\in U$ and $v\in V$
\begin{align*}
  \sup_{\gamma>0} \norm{\scapro{F_\gamma(\cdot)u}{v}}_\infty^2
  &\le \norm{G^\ast u}^2 \norm{v}^2\sup_{\gamma>0}\sup_{k\in\N} \norm{\phi_\gamma^k}^2_\infty
  <\infty,
\end{align*}
which establishes Condition \eqref{eq.proMconv-bound}. For the last part, fix $u\in U$ and $v\in V$ and define for each $\gamma>0$ and $n\in \N$  the functions
\begin{align*}
  f&:=\scapro{F(\cdot)u}{v}, \qquad f^\gamma:=\scapro{F_\gamma(\cdot)u}{v},\\
  f_n&:=\sum_{k=1}^n\scapro{u}{Ge_k}\scapro{v}{e_k}\phi^k ,
  \qquad f_n^\gamma:=\sum_{k=1}^n\scapro{u}{Ge_k}\scapro{v}{e_k}\phi_\gamma^k .
\end{align*}
Cauchy--Schwarz inequality implies
\begin{align*}
 \sup_{\gamma>0} \norm{f_n^\gamma-f^\gamma}_\infty^2
  &=\sup_{\gamma>0} \norm{\sum_{k=n+1}^\infty \phi_\gamma^k(\cdot) \scapro{u}{Ge_k}\scapro{v}{e_k}}^2_\infty \\
  &\le \sup_{\gamma>0}\sup_{k\in\N}\norm{\phi_\gamma^k}_\infty^2 \left(\sum_{k=n+1}^\infty \scapro{G^\ast u}{e_k}^2\right)
  \left(\sum_{k=n+1}^\infty \scapro{v}{e_k}^2\right)
  \to 0, \quad n\to\infty,
\end{align*}
and analogously we obtain $\norm{f_n-f}_{\infty}\to 0$ as $n\to\infty$. Since Condition
 \eqref{eq.M-weak-diagonal-conv-phiinM} guarantees by Theorem 12.7.2 in \cite{whitt02} that
$ f_n^\gamma \to f_n$ as $\gamma\to\infty$ in  $\big(D([0,T];\R),d_M)$ for all $n\in\N$,
Lemma \ref{le.sup+M} implies
\begin{align*}
  \lim_{\gamma\to\infty} \scapro{F_\gamma(\cdot)u}{v}=\scapro{F(\cdot)u}{v}
   \qquad\text{in $\big(D([0,T];\R),d_M)$,}
\end{align*}
which shows Condition \eqref{eq.proMconv-conv} and completes the proof.
\end{proof}

\section{Integrated Ornstein-Uhlenbeck process}\label{se.O-U-process}

Let $V$ be a separable Hilbert space and let $A$ be the generator of a strongly continuous semigroup 
$(S(t))_{t\ge 0}$ in $V$. For $\gamma>0$ consider the equation
\begin{align}\label{eq.eCauchy}
\begin{split}
  dY_\gamma(t)&=\gamma AY_\gamma (t)\,dt + G\,dL(t)\qquad\text{for }t\in [0,T],\\
   Y_{\gamma}(0)&=0,
\end{split}
\end{align}
where $L$ denotes a L{\'e}vy process in $U$ and $G\in \L(U,V)$. A progressively measurable stochastic process 
$(Y_\gamma(t)\colon t\in [0,T])$ is called a {\em weak solution of \eqref{eq.eCauchy}} if it satisfies 
for every $v\in \D(A^\ast)$ and $t\in [0,T]$ $P$-a.s. the equation
\begin{align*}
  \scapro{Y_\gamma(t)}{v}=\int_0^t \scapro{Y_\gamma(s)}{A^\ast v}\, ds + \scapro{L(t)}{G^\ast v}.
\end{align*}

Since $A_\gamma:=\gamma A\colon \D(A)\subseteq V\to V$ is the generator of the $C_0$-semigroup $(S_\gamma(t))_{t\ge 0}$ 
where $S_\gamma(t):=S(\gamma t)$ for all $t\ge 0$, Theorem 2.3 in \cite{Chojnowska87} 
implies that there exists a unique weak solution $ Y_\gamma:=(Y_\gamma(t)\colon  t\in [0,T])$ of 
\eqref{eq.eCauchy}, and which can be represented by
\begin{align*}
  Y_{\gamma}(t)=\int_0^t S_{\gamma}(t-s)G\, dL(s)\qquad\text{for all }t\in [0,T].
\end{align*}
We require that $Y_\gamma$ has c{\`a}dl{\`a}g trajectories, which is satisfied for example if 
the semigroup $(S(t))_{t\ge 0}$ is analytic or contractive; see \cite{PesZab07}. 
Then the integrated Ornstein-Uhlenbeck process $(X_\gamma(t)\colon  t\in [0,T])$ is defined by
\begin{align*}
  X_\gamma(t):= \gamma \int_0^t Y_\gamma (s)\,ds \qquad\text{ for all } t\in [0,T].
\end{align*}

\begin{corollary}
Assume that the semigroup $(S(t))_{t\ge 0}$ is diagonalisable, i.e.\ there exists an orthonormal 
basis $e:=(e_k)_{k\in\N}$ of $V$ such that
\begin{align*}
   S(t)e_k=e^{-\lambda_k t}e_k
   \qquad \text{for all }t\ge 0, \, k\in\N,
\end{align*}
for some $\lambda_k> 0$ with $\lambda_k\to \infty $ for $k\to\infty$.
Then the integrated Ornstein--Uhlenbeck process $X_\gamma$ satisfies
\begin{align*}
  \lim_{\gamma\to\infty}\big(AX_\gamma(t)\colon  t\in [0,T]\big)= \big(-GL(t)\colon  t\in [0,T]\big)
\end{align*}
in probability in the product topology $\big(D([0,T];V), d_M^{{\,e}}\big)$.

\end{corollary}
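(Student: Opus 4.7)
First, I would rewrite the integrated Ornstein--Uhlenbeck process as a stochastic convolution integral. A stochastic Fubini argument combined with the closed-operator identity $A\int_0^\tau S(v)x\, dv = S(\tau)x - x$ (valid for every $x\in V$) yields
\[
AX_\gamma(t) = \int_0^t F_\gamma(t-s)\, dL(s), \qquad F_\gamma(s) := S(\gamma s)G - G,
\]
while $-GL(t) = \int_0^t F(t-s)\, dL(s)$ with $F \equiv -G$. The claimed convergence thus becomes a convergence of stochastic convolution integrals in the product topology, which by \eqref{eq.prod=point} reduces to establishing, for every $k\in\N$, that $\scapro{AX_\gamma(\cdot)}{e_k} \to -\scapro{GL(\cdot)}{e_k}$ in $\big(D([0,T];\R), d_M\big)$ in probability.

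In the eigenbasis diagonalising the semigroup, the integrands fit the form required by Corollary \ref{co.Mproductconv-diagonal} (with the corollary's $G\in\L(V,U)$ realised here as $G^\ast$): $F_\gamma^\ast(s)e_k = \phi_\gamma^k(s)\, G^\ast e_k$ and $F^\ast(s)e_k = \phi^k(s)\, G^\ast e_k$ with $\phi_\gamma^k(s) = e^{-\lambda_k\gamma s} - 1$ and $\phi^k \equiv -1$. The supremum and total-variation hypotheses are immediate: since $\lambda_k, \gamma > 0$, the function $\phi_\gamma^k$ is monotone with range in $(-1,0]$, whence $\norm{\phi_\gamma^k}_\infty \le 1$ and $\norm{\phi_\gamma^k}_{TV_2}^2 \le (\phi_\gamma^k(0)-\phi_\gamma^k(T))^2 \le 1$. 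The principal obstacle is hypothesis (iii) of that corollary: the required $d_M$-convergence $\phi_\gamma^k \to \phi^k$ on $[0,T]$ fails because any parametric representation must match endpoint values and $\phi_\gamma^k(0) = 0 \ne -1 = \phi^k(0)$.

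To overcome this I would revisit the proof of Theorem \ref{th.Dconv}, on which Corollary \ref{co.Mproductconv-diagonal} is built, and observe that the failing hypothesis is invoked there only (a) to deliver Lebesgue-a.e.\ pointwise convergence of $\phi_\gamma^k$ to $\phi^k$ for the finite-dimensional step through Lemma \ref{le.fddRn}, which here holds on $(0,T]$, and (b) to establish, for each strictly positive jump time $\tau_j$ of the compound Poisson part $Y$ of $L$, the $d_M$-convergence of the shifted integrand $R_\gamma^j(t) := \phi_\gamma^k(t-\tau_j)\,\1_{[\tau_j,T]}(t)\cdot c_j$, where $c_j := \scapro{G^\ast e_k}{\Delta Y(\tau_j)}$, to the jump function $R^j(t) := -c_j\,\1_{[\tau_j,T]}(t)$. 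Step (b) can be verified directly: $R_\gamma^j$ is continuous, vanishes on $[0,\tau_j]$, and on $[\tau_j,T]$ traces monotonically the segment between $0$ and $-c_j$ as $\gamma$ increases, while $R^j$ jumps at $\tau_j$ from $0$ to $-c_j$; since the extended graph of $R^j$ at $\tau_j$ is precisely that segment, this is the prototypical $M_1$-approximation of a jump by a continuous monotone path. With (a) and (b) in hand, the oscillation estimate of Theorem \ref{th.Dconv} goes through verbatim: Borell's inequality handles the Wiener contribution, Lemma \ref{le.MarcusRosHilbert-realvalued} together with the uniform $TV_2$-bound handles the small-jump contribution, and (b) combined with Whitt's Corollary 12.7.1 handles the large-jump plus drift contribution. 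This verifies both conditions of Theorem \ref{th.stochconv=fdd+M} for each real-valued projection $\scapro{AX_\gamma(\cdot)}{e_k}$ and completes the proof.
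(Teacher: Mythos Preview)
Your argument is correct, but the paper resolves the endpoint obstruction differently and more economically. Rather than reopening the proof of Theorem~\ref{th.Dconv} to check that each ingredient survives the failure of $d_M$-convergence at $t=0$, the paper simply \emph{extends the domain} to $[-1,T]$, setting $F$ and $F_\gamma$ equal to $0$ on $[-1,0)$. On this larger interval the scalar integrands become $\phi^k(t)=-\1_{[0,T]}(t)$ and $\phi_\gamma^k(t)=\1_{[0,T]}(t)(e^{-\lambda_k\gamma t}-1)$; the limit $\phi^k$ now has its jump at the \emph{interior} point $0$, and since each $\phi_\gamma^k$ is monotone and converges pointwise on $[-1,T]\setminus\{0\}$, Whitt's Corollary~12.5.1 gives $\phi_\gamma^k\to\phi^k$ in $\big(D([-1,T];\R),d_M\big)$. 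This lets the paper apply Corollary~\ref{co.Mproductconv-diagonal} as a black box. Your approach instead keeps the domain $[0,T]$ and verifies by hand that the two places where hypothesis (iii) enters Theorem~\ref{th.Dconv} --- Lebesgue-a.e.\ convergence for the marginals, and $d_M$-convergence of each shifted jump contribution $R_\gamma^j$ --- still hold, the latter because the jump times $\tau_j$ are strictly positive a.s., pushing the problematic point into the interior. Both routes are valid; the domain-extension trick is cleaner and reusable, while your dissection makes more transparent \emph{why} the endpoint issue is harmless. (Incidentally, your bound $\norm{\phi_\gamma^k}_{TV_2}\le 1$ is sharper than the paper's claim that the $2$-variation vanishes, which is not quite right for monotone functions, though only a uniform bound is needed.)
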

\begin{proof}
For every $t\in [0,T]$ and $v\in \D(A^\ast)$ Fubini's theorem implies
by \eqref{eq.integralswap}
\begin{align}\label{eq.eCauchyintFubini}
  \scapro{AX_\gamma(t)}{v}
  &= \gamma \int_0^t \scapro{Y_\gamma(s)}{A^\ast v}\,ds\notag\\
  &=\gamma \int_0^t \left(\int_0^s G^\ast S_\gamma^\ast (s-r)A^\ast v\,dL(r)\right)\,ds\notag\\
  &=\gamma \int_0^t\left(  \int_r^t G^\ast S_\gamma^\ast (s-r)A^\ast v\,ds\right)\,dL(r).
\end{align}
It follows from properties of the adjoint semigroup (see \cite[Proposition 1.2.2]{vanNeervenAdj})
that for every $r\in [0,t]$ we have
\begin{align}\label{eq.intadsemi}
 \int_r^t G^\ast S_\gamma^\ast (s-r)A^\ast  v\,ds
  = \frac{1}{\gamma} G^\ast A^\ast_\gamma \int_0^{t-r} S_\gamma^\ast (s) v\,ds
  =\frac{1}{\gamma}  \left( G^\ast S^\ast_\gamma(t-r)v - G^\ast v\right).
\end{align}
Define a L{\'e}vy process $K$ in $V$ by $K(t):=GL(t)$ for all $t\ge 0$. 
By combining \eqref{eq.intadsemi} with \eqref{eq.eCauchyintFubini} we obtain from \eqref{eq.integralswap}
and \eqref{eq.integral-imageLevy} that
\begin{align*}
  \scapro{AX_\gamma(t)}{v}
  =\int_0^t \left( G^\ast S^\ast_\gamma(t-r)v - G^\ast v\right)\,dL(r)
  =\scapro{\int_0^t \big(S_\gamma(t-r)- \Id\big)\,dK(r)}{v}.
\end{align*}
Consider the functions
\begin{align*}
  &F\colon [-1,T]\to \L(V,V),\qquad F(t)=
  \begin{cases}
    -\Id , &\text{if }t\in [0,T],\\
     0 , &\text{if }t\in [-1,0),
  \end{cases}\\
  &F_\gamma\colon [-1,T]\to \L(V,V), \qquad F_\gamma (t)=
  \begin{cases}
   S(\gamma\,t)-\Id, &\text{if } t\in [0,T],\\
   0, &\text{if }t\in [-1,0).
  \end{cases}
\end{align*}
Defining the functions above on $[-1,T]$ and not only on $[0,T]$ with a jump at $0$ enables us to
consider c{\`a}dl{\`a}g functions. By means of Corollary \ref{co.Mproductconv-diagonal} 
(with an obvious adaption for considering the interval $[-1,T]$) we show that
\begin{align}\label{eq.AXtoL}
  \lim_{\gamma\to \infty}
  \left(\int_0^t  \big(S_{\gamma}(t-r)- \Id\big)\,dK(r)\colon  t\in [-1,T]\right)
  =  \left(\int_0^t F(t-s)\,dK(s)\colon  t\in [-1,T]\right)
\end{align}
in probability in the product topology $\big(D([-1,T];V), d_M^{{\,e}}\big)$.
The functions $F_\gamma$ and $F$ are of the form
\begin{align*}
  F^\ast (t)e_k &=\phi^k(t)\, e_k,\qquad\qquad  F_\gamma^\ast (t)e_k =\phi_\gamma^k(t) e_k \qquad\text{for all }t\in [-1,T], \,k\in\N,
\intertext{where the real-valued functions $\phi^k$, $\phi^k_\gamma\colon [-1,T]\to\R$ are defined by}
\phi^k(t)&=-\1_{[0,T]}(t), \qquad \quad \phi_\gamma^k(t)=\1_{[0,T]}(t)(e^{-\lambda_k \gamma t}-1).
\end{align*}
 For every $k\in\N$  the sequence $(\phi_\gamma^k)_{\gamma>0}$ meets Condition \eqref{eq.thmMconvprod-diagonal-bound}, as
\begin{align*}
  \sup_{\gamma>0}\norm{\phi_\gamma^k}_\infty
  =\sup_{\gamma>0}\sup_{s\in [0,T]} \abs{e^{-\lambda_k \gamma s}- 1}\le1.
\end{align*}
Since $\phi_\gamma^k$ is a decreasing function it has finite variation and
thus $\norm{\phi_\gamma^k}_{TV_2}=0$
which verifies Condition \eqref{eq.thmMconvprod-diagonal-TV}.
Since $\phi_\gamma^k$ is monotone for each $\gamma>0$, $k\in\N$ and satisfies
for each $k\in\N$
\begin{align*}
  \lim_{\gamma\to\infty}\phi_\gamma^k(s)= \phi^k(s)
  \qquad\text{for all }s\in [-1,T]\setminus\{0\},
\end{align*}
it follows from Corollary 12.5.1 in \cite{whitt02}, that $\phi_\gamma^k\to \phi^k
$ as $\gamma\to \infty$ in $\big(D([0,T];\R), d_M\big)$,
which is Condition \eqref{eq.thmMconvprod-diagonal-phiinM}. Thus,
we can apply Corollary  \ref{co.Mproductconv-diagonal} to conclude \eqref{eq.AXtoL}.
\end{proof}


\end{document}